\newcommand*{\nfrac}[2]{\genfrac{}{}{0pt}{}{#1}{#2}}
\newtheorem{thmalpha}{Theorem}
\newtheorem{prop}{Proposition}[section]
\newtheorem{coro}[prop]{Corollary}
\newtheorem{thm}[prop]{Theorem}
\newtheorem{lemma}[prop]{Lemma}
\newtheorem{definition}[prop]{Definition}
\newtheorem{construction}[prop]{Construction}
\newtheorem{example}[prop]{Example}
\DeclareMathOperator{\Cor}{Cor}
\newcommand{\GG}{\mathcal{G}}
\newcommand{\CC}{\mathcal{C}}
\newcommand{\FF}{\mathcal{F}}
\newcommand{\PP}{\mathcal{P}}
\newcommand{\Aut}{\mathrm{Aut}}
\newcommand{\BP}{\mathcal{BP}}
\begin{document}
\title{Constructing new geometries: a generalized approach to halving for hypertopes}

\author[Claudio Alexandre Piedade]{Claudio Alexandre Piedade}
\address{Claudio Alexandre Piedade, Centro de Matemática da Universidade do Porto, Universidade do Porto, Portugal, Orcid number 0000-0002-0746-5893
}
\email{claudio.piedade@fc.up.pt}

\author[Philippe Tranchida]{Philippe Tranchida}
\address{Philippe Tranchida, Max Planck Institute for Mathematics in the Sciences, Inselstrasse 22 D-04107 Leipzig, Orcid number 0000-0003-0744-4934.
}
\email{tranchida.philippe@gmail.com}
\subjclass{52B11, 51E24, 05E18, 05B25, 20D99}{}
\keywords{Incidence geometry, Hypertopes, Abstract Polytopes, Halving operation}

\thanks{\textit{Declaration of interests:} none}
\date{\today}
\maketitle

\begin{abstract}
Given a residually connected incidence geometry $\Gamma$ that satisfies two conditions, denoted $(B_1)$ and $(B_2)$, we construct a new geometry $H(\Gamma)$ with properties similar to those of $\Gamma$. This new geometry $H(\Gamma)$ is inspired by a construction of Percsy, Percsy and Leemans \cite{Leemans2000}. We show how $H(\Gamma)$ relates to the classical halving operation on polytopes, allowing us to generalize the halving operation to a broader class of geometries, that we call non-degenerate leaf hypertopes. Finally, we apply this generalization to cubic toroids in order to generate new examples of regular hypertopes.
\end{abstract}













\section{Introduction}

The study of polytopes is one of the foundations of mathematics, tracing back to the ancient Greeks.
The idea of an abstract polytope was formally defined in \cite{ARP} as a ranked poset whose elements are faces, generalizing the concept of the face-lattice of convex polytopes. This idea was developed on many fronts, with inspiration from the works of Tits~\cite{Tits1961} (incidence geometries on Coxeter groups), Buekenhout~\cite{buekenhout2013diagram} (diagram geometries), McMullen~\cite{McMullen_1967} (combinatorially regular convex polytopes), Grünbaum~\cite{grunbaum1976} (regular polystromata), Danzer and Schulte~\cite{Danzer1982} (regular incidence complexes). The automorphism group of abstract regular polytopes is always a string C-group, a smooth quotient of a Coxeter group with linear Coxeter-Dynkin diagram.

Using the vocabulary of incidence geometry, abstract regular polytopes can be defined as an incidence geometries which are thin, residually-connected, flag-transitive, and whose Buekenhout diagram is linear.
Contrary to the definition using ranked posets, in general there is no ordering o n the types of the elements in incidence geometries. This observation lead, in 2016, Fernandes, Leemans and Weiss ~\cite{hypertopes} to introduce the notion of \emph{regular hypertopes}. These are a thin, residually-connected, flag-transitive incidence geometries, with no restriction on the shape of their Buekenhout diagrams. The automorphism group of a regular hypertope is a \emph{C-group}, a smooth quotient of a Coxeter group. Its Coxeter-Dynkin diagram coincides with the Buekenhout diagram of the geometry.
The study of such structures has been a prolific area of research, with, for example, a classification of infinite families of both locally spherical \cite{fernandes2020exploration,piedade2023infinite,montero2022proper} and locally toroidal hypertopes \cite{fernandes2018hexagonal,fernandes2019two,ens2018rank,piedade2023infinite}. Additionally, classification of regular hypertopes of specified rank has been done for rank $3$ \cite{hou2019existence}, rank $4$ \cite{catalano2018hypertopes,ens2018rank}, and ranks $5$, $6$ and $7$ \cite{zhang2024abelian}.

In \cite{ARP}, McMullen and Schulte introduce different mixing operations on the automorphism group of abstract regular polytopes in order to construct new abstract regular polytopes out of existing ones. One operation of particular interest to us is the \emph{halving operation}. It is an operation $\eta$ defined originally on an abstract regular polyhedra $\Gamma$ of Schl\"{a}fli type $\{4,q\}$, with $q\geq 3$, such that $\eta(\Gamma)$ is an abstract regular polyhedra of Schl\"{a}fli type $\{q,q\}$. The reason this operation, as defined in \cite{ARP}, is restricted to regular polyhedra of Schl\"{a}fli type $\{4,q\}$ is that, if we try to apply this operation to polytopes of higher ranks, or rank $3$ polytopes of Schl\"{a}fli type different from $\{4,q\}$, the resulting C-group would have a non-linear Coxeter-Dynkin diagram. 
With the introduction of regular hypertopes, Montero and Weiss~\cite{MonteroWeiss} generalized the halving operation to any non-degenerate abstract regular polytope $\Gamma$, proving that $\eta(\Gamma)$ will always be a regular hypertope. 
The importance of the generalization of the halving operation introduced by \cite{MonteroWeiss} is that it allows, starting from a non-degenerate regular polytope, to obtain a regular hypertope with a Y-shaped diagram or a tail-triangle diagram. This makes this operation extremely useful for classifying new regular hypertopes. However, as this operation is defined on polytopes, it can only be applied once, since the resulting geometry will not have a linear diagram anymore. Moreover, the non-degenerate condition used in \cite{MonteroWeiss} is based on the works of Monson and Schulte\cite{MonsonSchulte}. In that context, a polytope is non-degenerate if its face poset is a lattice. This condition, although strong, is not easy to verify in general.

In \cite{Leemans2000}, Percsy, Percsy and Leemans show different, and quite general, ways to obtain new interesting incidence geometries from existing ones. These constructions are very concrete and do not depend on the existence of a well behaved automorphism group. 

Our goal in this article is to generalize the halving operation even further, so that it can be applied to regular hypertopes, under some necessary conditions. While working on this generalization, we realized that some of the constructions in \cite{Leemans2000}, when restricted to polytopes, produce geometries that have many things in common with the one obtained by the halving operation. As in \cite{Leemans2000}, all the geometries we consider will have to satisfy the following two conditions:
\begin{enumerate}
    \item[$(B_1)$] The $\{0,1\}$-truncation $\Gamma[0,1]$ of $\Gamma$ is the geometry of a simple graph \label{B1};
    \item[$(B_2)$] For a $1-$element $e$ and an $i-$element $i$ with $i \neq 0,1$, we have that $e * x$ if and only if $\sigma_0(e) \subset \sigma_0(x)$.\label{B2};
\end{enumerate}
where $\sigma_0(y)$ denotes the $0$-shadow of the elements $y \in \Gamma$ (see Section~\ref{subsec:incidence}).

In Section~\ref{sec:partitionedgeom}, we introduce a new construction, which is, in some sense, a generalization of the Construction 5.2 in \cite{Leemans2000}.

\begin{thmalpha}\label{thm:mainalpha}
    Let $\Gamma$ be a residually connected geometry of rank $n \geq 3$ over the type set $I = \{0,1,\cdots,n-1\}$ that satisfies $(B_1)$ and $(B_2)$ for the pair $\{0,1\}$, and such that the $\{0,1\}$-truncation of $\Gamma$ is not a bipartite graph. Then, there exists a geometry $\PP(\Gamma)(0,1)$ with diagram

    \begin{center}
         \begin{tikzpicture}[scale = 0.4]
    
         \filldraw[black] (2,-4) circle (2pt)  node[anchor=east]{$1$};
         \filldraw[black] (2,4) circle (2pt)  node[anchor=east]{$0$};
         \filldraw[black] (7,0.8) circle (0pt)  node[anchor=north]{$\vdots$};
         \filldraw[black] (7,2) circle (2pt)  node[anchor=west]{$3$};
         \filldraw[black] (7,-2) circle (2pt)  node[anchor=south]{$n-2$};
         \filldraw[black] (7,4) circle (2pt)  node[anchor=west]{$2$};
         \filldraw[black] (7,-4) circle (2pt)  node[anchor=south]{$n-1$};
         \draw (2,4) -- (2,-4)node [midway,left] (TextNode) {$\tilde{\mathcal{S}}(P)$};
         \draw (2,4) -- (7,2)node [midway] (TextNode) {$K_3$};
         \draw (2,-4) -- (7,2)node [midway] (TextNode) {$K_3$};
         \draw (2,4) -- (7,4)node [midway] (TextNode) {$K_2$};
         \draw (2,-4) -- (7,4)node [midway] (TextNode) {};
         \draw (2,4) -- (7,-2)node[midway] (TextNode) {};
         \draw (2,-4) -- (7,-2)node[midway] (TextNode) {$K_{n-2}$};
         \draw (2,4) -- (7,-4)node[midway] (TextNode) {$K_{n-1}$};
         \draw (2,-4) -- (7,-4)node[midway] (TextNode) {$K_{n-1}$};
         \draw (7,0) ellipse (2cm and 5cm);

    \end{tikzpicture}
    \end{center}

    where $\tilde{\mathcal{S}}(P)$ is the set $\{\tilde{\GG}(P) \mid \GG \in S, P \in \pi(\GG)\}$ (see Section~\ref{subsec:partitiongeom}) and the diagram of $I \setminus \{0,1\}$ is the same as for $\Gamma$.
    Furthermore, $\PP(\Gamma)$ is residually connected and $\PP(\Gamma)$ is, respectively, thin and flag-transitive if and only if $\Gamma$ is, respectively, thin and flag-transitive.
\end{thmalpha}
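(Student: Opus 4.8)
The plan is to peel the statement into its successive assertions and treat each using the explicit description of $\PP(\Gamma)$ set up in Section~\ref{sec:partitionedgeom}. First I would verify that $\PP(\Gamma)$ is a geometry, i.e.\ that every maximal flag contains exactly one element of each of the $n$ types. The only delicate point is that a flag built from elements of types $2,\dots,n-1$ (which, via $(B_2)$, is governed by a flag of $\Gamma$ of the same cotype together with its $0$-shadow) can always be completed by a type-$0$ and a type-$1$ element; this is exactly where the hypothesis that $\Gamma[0,1]$ is not bipartite enters, as it guarantees that the relevant induced subgraph $\GG\in S$ admits a partition $P\in\pi(\GG)$ and hence a legitimate choice of new $0$- and $1$-elements. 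For the diagram I would then compute the rank-$2$ residues type by type: residues of cotype inside $\{2,\dots,n-1\}$ are, by construction, residues of $\Gamma$ (taken inside an appropriate $0$-shadow), so the induced sub-diagram on $\{2,\dots,n-1\}$ agrees with that of $\Gamma$; for a residue of cotype $\{0,i\}$ or $\{1,i\}$ with $i\geq 2$, the incident type-$i$ elements are controlled by $0$-shadows and every pair of the $i$ relevant points is joined, so the incidence graph is that of $K_i$; and the residue of cotype $\{0,1\}$ is, by definition of the construction, one of the geometries $\tilde{\GG}(P)$, whence the edge label $\tilde{\mathcal{S}}(P)$.

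For residual connectedness I would use the standard criterion that it suffices to check that the incidence graph of every residue of rank at least $2$, as well as $\PP(\Gamma)$ itself, is connected. The key structural fact to establish is that the construction commutes with passing to residues: a residue of $\PP(\Gamma)$ of cotype $J$ is again of the form $\PP(\Gamma')$ for a residue $\Gamma'$ of $\Gamma$ satisfying $(B_1)$, $(B_2)$ and — when $\{0,1\}\not\subseteq J$ — the non-bipartiteness hypothesis, or else is directly a residue of $\Gamma$. Granting this, an induction on the rank reduces everything to the rank-$2$ residues, whose connectedness is immediate from the identifications above ($K_i$ and the $\tilde{\GG}(P)$ are connected, the latter again by non-bipartiteness) together with the residual connectedness of $\Gamma$.

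The two equivalences I would handle by the same bookkeeping. For thinness, a corank-$1$ flag of $\PP(\Gamma)$ missing a type $i\geq 2$ has its set of completions in bijection with that of a corresponding corank-$1$ flag of $\Gamma$, so the count is $2$ precisely when $\Gamma$ is thin there; a corank-$1$ flag missing type $0$ or type $1$ has its completions governed by a residue $\tilde{\GG}(P)$ and by $0$-shadows, and the same correspondence shows the count again matches the relevant count in $\Gamma$. For flag-transitivity, one direction is formal: $\PP(\Gamma)$ is built only from intrinsic data of $\Gamma$ (the truncation $\Gamma[0,1]$, the $0$-shadows, the induced partitions), so every type-preserving automorphism of $\Gamma$ induces one of $\PP(\Gamma)$, and the chambers of $\PP(\Gamma)$ sit in a canonical $\Aut(\Gamma)$-equivariant bijection with those of $\Gamma$ — here non-bipartiteness is used to ensure the auxiliary partition datum is pinned down by, or at worst permuted transitively under the stabilizer of, the underlying flag of $\Gamma$. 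Conversely, I would reconstruct $\Gamma$ (or at least its chamber system) from $\PP(\Gamma)$ using the type-$\{0,2,\dots,n-1\}$ elements and the incidences recorded by the $K_i$-residues, so that $\Aut(\PP(\Gamma))$ surjects onto $\Aut(\Gamma)$ and flag-transitivity descends.

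I expect the main obstacle to be the residue-commutation claim underlying the inductive step, together with the equivariant chamber correspondence it feeds into: making precise that a residue of $\PP(\Gamma)$ is itself a $\PP$-construction, and isolating exactly how non-bipartiteness makes the auxiliary partition data rigid enough for that bijection — and hence the automorphism correspondence — to hold, is where the real work lies. Getting the labels $K_i$ and $\tilde{\mathcal{S}}(P)$ exactly right in the diagram is the most computation-heavy part, but it should be routine once the rank-$2$ residues have been identified.
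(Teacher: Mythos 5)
Your overall architecture is the same as the paper's: prove that $\PP(\Gamma)$ is a geometry by extending flags to chambers, identify all residues of $\PP(\Gamma)$ in terms of data coming from $\Gamma$, read off the diagram and residual connectedness from that identification, and obtain thinness and flag-transitivity from the induced chamber correspondence. However, there is one genuine gap, and it sits exactly at the step you single out as the main obstacle: your residue-commutation claim is false as stated. Non-bipartiteness of $\Gamma[0,1]$ is \emph{not} inherited by residues. For a flag $\Phi$ of $\PP(\Gamma)$ avoiding types $0$ and $1$ and of corank at least three, the truncation $\Gamma_F[0,1]$ of the underlying residue of $\Gamma$ can perfectly well be bipartite even though $\Gamma[0,1]$ is not; this is what happens in the motivating examples (for the cubic toroid $\{4,3,4\}_{(3,0,0)}$ the $\{0,1\}$-truncation is non-bipartite while the truncations of the residues of squares and cubes are bipartite), and it is precisely the phenomenon the partition data $P_x$ attached to elements of type at least $2$ is designed to absorb. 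Consequently such a residue of $\PP(\Gamma)$ is not in general $\PP(\Gamma_F)$ for a residue satisfying the non-bipartiteness hypothesis: as in Lemma~\ref{lem:residueNot01}, it is $\PP(\Gamma_F)(0,1)$ when $\Gamma_F[0,1]$ is non-bipartite, but it is the bipartite construction $\BP(\Gamma_F)(0,1)$ of Construction~\ref{BPRH} when $\Gamma_F[0,1]$ is bipartite. Your induction on rank for residual connectedness therefore needs this second branch; the repair is to identify the bipartite case with $\BP(\Gamma_F)$ and invoke its (residual) connectedness (Theorem~\ref{thm:BP_fromDimi}, or directly the connectedness of the neighborhood geometry of a connected bipartite graph), rather than to argue that non-bipartiteness propagates to $\Gamma_F$.

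Two smaller points. In the diagram, the labels $K_2,\dots,K_{n-1}$ are not complete graphs but arbitrary classes of rank-two geometries arising as $\{1,i\}$-residues of $\Gamma$; the correct justification for the edges $\{0,i\}$ and $\{1,i\}$ of the new diagram is that the corresponding rank-two residues of $\PP(\Gamma)$ are isomorphic to $\{1,i\}$-residues of $\Gamma$ (a flag of cotype $\{0,i\}$ contains an element $(q,1)$, and $q$ is reinterpreted as a $0$-element of $\Gamma$), not a count of pairwise joined points. Also, non-bipartiteness of $\Gamma[0,1]$ is not what makes the flag-extension argument work --- that step only needs $(B_1)$, $(B_2)$ and the non-empty intersection of the partitions of pairwise incident elements (Lemma~\ref{lem:tripleintersection}); where non-bipartiteness genuinely enters is in the connectedness of $\PP(\Gamma)$ itself and in making the chamber correspondence with $\Gamma$ bijective, which is what your flag-transitivity argument actually relies on.
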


In Section~\ref{sec:halvinggeom}, we show that the algebraic halving operation can be constructed concretely as the halving geometry $H(\Gamma)$, as long as the starting geometry $\Gamma$ is a regular hypertope that satisfies the two conditions $(B_1)$ and $(B_2)$. This halving geometry is the geometry $\PP(\Gamma)$ of Theorem~\ref{thm:mainalpha} if the $\{0,1\}$-truncation of $\Gamma$ is not bipartite, and is the geometry $\BP(\Gamma)$, defined in \cite{Leemans2000} (see Theorem~\ref{thm:BP_fromDimi}), if the $\{0,1\}$-truncation of $\Gamma$ is bipartite. If $\Gamma$ is a regular hypertope, we have that  $G = \Aut(\Gamma) = \langle \rho_0, \rho_1, \cdots, \rho_{n-1} \rangle$ where $\rho_i$ is defined to be the unique automorphism sending $C$ to its unique $i$-adjacent chamber $C^i$. We can then define the halving group of $G$, denote by $H(G)$ to be $\langle \rho_0 \rho_1 \rho_0, \rho_1, \cdots, \rho_{n-1} \rangle$. We denote $H(G)_i$, for $i = 0,1,\cdots, n-1$, the subgroups of $H(G)$ generated by all the generators of $H(G)$, except from the $i$'th one. 

\begin{thmalpha}
    Let $\Gamma$ be a regular hypertope with $\Aut(\Gamma)=G$ and suppose that $\Gamma$ satisfies the conditions $(B_1)$ and $(B_2)$. Then, the coset geometry $(H(G),(H(G)_i)_{i \in I}))$ is isomorphic to $H(\Gamma)(0,1)$, where $H(G)$ is the halving group of $G$ and $H(\Gamma)$ is the halving geometry of $\Gamma$.
\end{thmalpha}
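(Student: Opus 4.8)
The plan is to route the proof through the standard correspondence between regular hypertopes and their automorphism C-groups. I will first show that $H(\Gamma)(0,1)$ is itself a regular hypertope: since a regular hypertope is thin, flag-transitive and residually connected, $\Gamma$ has these three properties, and hence so does $H(\Gamma)(0,1)$ --- by Theorem~\ref{thm:mainalpha} when $\Gamma[0,1]$ is not bipartite (so $H(\Gamma)=\PP(\Gamma)$) and by Theorem~\ref{thm:BP_fromDimi} when $\Gamma[0,1]$ is bipartite (so $H(\Gamma)=\BP(\Gamma)$). In particular $\Aut(H(\Gamma)(0,1))$ acts regularly on the chambers of $H(\Gamma)(0,1)$ and is generated by the $i$-adjacency automorphisms $\sigma_0,\dots,\sigma_{n-1}$ at any chosen base chamber. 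The remaining task is then to realise $H(G)$ inside $\Aut(H(\Gamma)(0,1))$ in such a way that the generator indexed by $i$ is exactly $\sigma_i$; once this is done, $\Aut(H(\Gamma)(0,1))=H(G)$ and the C-group/hypertope dictionary identifies $H(\Gamma)(0,1)$ with the coset geometry of $H(G)$ relative to the corank-one parabolics $\langle\sigma_j : j\ne i\rangle=H(G)_i$.

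To realise $H(G)$, fix the base chamber $\tilde C=\{F_0,\dots,F_{n-1}\}$ of $\Gamma$, so that $G=\langle\rho_0,\dots,\rho_{n-1}\rangle$ with $\rho_i$ the $i$-adjacency automorphism of $\Gamma$. By the construction of $\PP(\Gamma)$ (resp. $\BP(\Gamma)$), the elements of $H(\Gamma)$ of type $\ge2$ are the elements of $\Gamma$ of those types, with inherited incidence, while those of types $0$ and $1$ are built from $\Gamma[0,1]$ and its partition data; checking these definitions shows that it is exactly the subgroup $H(G)=\langle\rho_0\rho_1\rho_0,\rho_1,\rho_2,\dots,\rho_{n-1}\rangle$ of $G$ that induces type-preserving automorphisms of $H(\Gamma)(0,1)$, and that this action is faithful (an element acting trivially on $H(\Gamma)(0,1)$ fixes all $0$-elements and all elements of type $\ge2$ of $\Gamma$, hence, by $(B_1)$, since a $1$-element is determined by its $0$-shadow, all $1$-elements too, and so is the identity of $G$). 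Let $C=\{c_0,\dots,c_{n-1}\}$ be the base chamber of $H(\Gamma)(0,1)$ induced by $\tilde C$: $c_i=F_i$ for $i\ge2$, $c_0=F_0$, and $c_1$ is the type-$1$ element built from $F_0$ and $F_1$ as prescribed by the construction of $H(\Gamma)$. Now one verifies, using the construction together with $(B_1)$ and $(B_2)$, that each of $\rho_i$ ($i\ge2$), $\rho_1$, and $\rho_0\rho_1\rho_0$ fixes every element of $C$ except, respectively, $c_i$, $c_1$, and $c_0$, which it moves: $\rho_i$ moves $F_i$; $\rho_1$ cannot also fix $c_1$ on pain of being the identity of $H(\Gamma)(0,1)$, hence of $G$; and $\rho_0\rho_1\rho_0$ does not fix $F_0$, since otherwise $\rho_1$ would fix $F_0^{\rho_0}$ and, fixing both $0$-elements of $F_1$, would fix the $1$-element $F_1$, a contradiction. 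Since $\Aut(H(\Gamma)(0,1))$ acts regularly on chambers, it follows that $\rho_i=\sigma_i$ for $i\ge2$, $\rho_1=\sigma_1$, and $\rho_0\rho_1\rho_0=\sigma_0$, whence $\Aut(H(\Gamma)(0,1))=\langle\sigma_0,\dots,\sigma_{n-1}\rangle=H(G)$.

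Combining the two steps, $H(\Gamma)(0,1)$ is a regular hypertope whose automorphism group is $H(G)$ with $i$-adjacency generators as listed, so it is isomorphic to the coset geometry $(H(G),(\mathrm{Stab}_{H(G)}(c_i))_{i\in I})$, and $\mathrm{Stab}_{H(G)}(c_i)=\langle\sigma_j : j\ne i\rangle=H(G)_i$; this is the claimed isomorphism. The step I expect to be the real obstacle is the treatment of the type-$1$ element in the middle paragraph: pinning down $c_1$ explicitly through the partitioned-geometry construction of Section~\ref{subsec:partitiongeom}, and showing that $\rho_0\rho_1\rho_0$ and the $\rho_j$ with $j\ge2$ stabilise it while $\rho_1$ does not. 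This is precisely where the hypotheses are used essentially: $(B_1)$ makes $\Gamma[0,1]$ the incidence structure of a simple graph, so that $1$-elements are recoverable from their $0$-shadows and the arcs (or partition blocks) defining the new type-$1$ elements are well defined; and $(B_2)$ forces the incidence of a $1$-element with an element of type $\ge2$ to depend only on $0$-shadows, which is exactly what makes those incidences compatible with the $H(G)$-action and keeps $\mathrm{Stab}_{H(G)}(c_1)$ from being larger than $H(G)_1$.
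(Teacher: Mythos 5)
Your proposal is correct and follows essentially the same route as the paper's proof: show that $H(\Gamma)(0,1)$ is a regular hypertope (via Theorem~\ref{thm:main1}, resp.\ Theorem~\ref{thm:BP_fromDimi}), identify $H(G)$ as its type-preserving automorphism group acting simply chamber-transitively, check by the same base-chamber computation that $\rho_0\rho_1\rho_0,\rho_1,\dots,\rho_{n-1}$ are exactly the adjacency involutions of an induced base chamber, and conclude through the C-group/coset-geometry correspondence of Proposition~\ref{prop:CosetGeometryfromAction}. The only (harmless) deviation is in the bipartite case, where you obtain $\Aut(H(\Gamma)(0,1))=H(G)$ directly from the adjacency-involution identification, whereas the paper uses Proposition~\ref{bipartite_cond} and the index-two count supplied by Theorem~\ref{thm:BP_fromDimi}.
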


Finally in Section~\ref{sec:cubictoroid}, we give applications of the halving operation on cubic toroids, showing its usefulness in building new interesting geometries.

\section{Background}

\subsection{Incidence geometries}\label{subsec:incidence}
    A 4-tuple $\Gamma = (X,I,*,t)$ is called an \textit{incidence system} if
    \begin{enumerate}
        \item $X$ is a set whose elements are called the \textit{elements} of $\Gamma$,
        \item $I$ is the set of types of $\Gamma$, 
        \item $*$ is a symmetric and reflexive relation (called the \textit{incidence relation}) on $X$, and
        \item $t$ is a map from $X$ to $I$, called the \textit{type map} of $\Gamma$, such that distinct elements $x,y \in X$ with $x * y$ satisfy $t(x) \neq t(y)$. 
    \end{enumerate}
Elements of $t^{-1}(i)$ are called the elements of type $i$ (or $i$-elements) and the set of elements of type $i$ is denoted by $S_i(\Gamma) = S_i$.
The \textit{rank} of $\Gamma$ is the cardinality of the type set $I$.
In an incidence system $\Gamma$ a \textit{flag} is a set of pairwise incident elements. The type of a flag $F$, $t(F)$, that is the set of types of the elements of $F$. 
The \textit{cotype} of a flag $F$ is the complement in $I$ of the type of $F$.
A \textit{chamber} is a flag of type $I$. An incidence system $\Gamma$ is an \textit{incidence geometry} if all its maximal flags are chambers. The set of all chambers of an incidence system $\Gamma$ will be denoted as $\CC(\Gamma)$.

For any $J \subset I$, we denote by $\Gamma[J]$ the $J-$truncation of $\Gamma$. By abuse of notation, we will use $\Gamma[0,1]$ instead of $\Gamma[\{0,1\}]$ to denote the $\{0,1\}$-truncation of $\Gamma$. For any element $x \in \Gamma$, we denote by $\sigma_0(x)$ the $0$-shadow of $x$ (i.e: the set of $0$-elements incident to $x$).

Let $F$ be a flag of $\Gamma$. An element $x\in X$ is {\em incident} to $F$ if $x*y$ for all $y\in F$. The \textit{residue} of $\Gamma$ with respect to $F$, denoted by $\Gamma_F$, is the incidence system formed by all the elements of $\Gamma$ incident to $F$ but not in $F$. The \textit{rank} of the residue $\Gamma_F$ is equal to rank$(\Gamma) - |F|$. If $\Gamma$ is of rank $n$, then the \textit{corank} of a flag $F$ is the difference between the rank of $\Gamma$ and the rank of $F$.

The \textit{incidence graph} of $\Gamma$ is a graph with vertex set $X$ and where two elements $x$ and $y$ are connected by an edge if and only if $x * y$. 
An incidence geometry $\Gamma$ is \textit{connected} if its incidence graph is connected. It is \textit{residually connected} if all its residues of rank at least two are connected. It is \textit{thin} if all its residues of flags of corank one contain exactly two elements. Let $C$ be a chamber of a thin incidence geometry $\Gamma$. Then, there exists exactly one chamber of $\Gamma$ that differs from $C$ only by its element of type $i\in I$. We denote that chamber as $C^i$ and called it the \emph{$i$-adjacent chamber} of $C$.

Francis Buekenhout introduced in~\cite{buek} a new diagram associated to incidence geometries. His idea was to associate to each rank two residue a set of three integers giving information on its incidence graph.
Let $\Gamma$ be a rank $2$ geometry. We can consider $\Gamma$ to have type set $I = \{P,L\}$, standing for points and lines. The {\em point-diameter}, denoted by $d_P(\Gamma) = d_P$, is the largest integer $k$ such that there exists a point $p \in P$ and an element $x \in \Gamma$ such that $d(p,x) = k$, where $d(p,x)$ denotes the distance between $p$ and $x$ in the incidence graph of $\Gamma$. Similarly the {\em line-diameter}, denoted by $d_L(\Gamma) = d_L$, is the largest integer $k$ such that there exists a line $l \in L$ and an element $x \in \Gamma$ such that $d(l,x) = k$. Finally, the \textit{gonality} of $\Gamma$, denoted by $g(\Gamma) = g$ is half the length of the smallest circuit in the incidence graph of $\Gamma$.

If a rank $2$ geometry $\Gamma$ has $d_P = d_L = g = n$ for some natural number $n$, we say that it is a \textit{generalized $n$-gon}. Generalized $2$-gons are also called generalized digons. They are in some sense trivial geometries since all points are incident to all lines. Their incidence graphs are complete bipartite graphs. Generalized $3$-gons are projective planes.

Let $\Gamma$ be a geometry over $I$.  The \textit{Buekenhout diagram} (or diagram for short) $D$ for $\Gamma$ is a graph whose vertex set is $I$. Each edge $\{i,j\}$ is labeled with a collection $D_{ij}$ of rank $2$ geometries. We say that $\Gamma$ belongs to $D$ if every residue of rank $2$ of type $\{i,j\}$ of $\Gamma$ is one of those listed in $D_{ij}$ for every pair of $i \neq j \in I$. In most cases, we use conventions to turn a diagram $D$ into a labeled graph. The most common convention is to not draw an edge between two vertices $i$ and $j$ if all residues of type $\{i,j\}$ are generalized digons, and to label the edge $\{i,j\}$ by a natural integer $n$ if all residues of type $\{i,j\}$ are generalized $n$-gons. It is also common to omit the label when $n=3$ and to put a double edge when $n=4$.

Let $\Gamma = \Gamma(X,I,*,t)$ be an incidence geometry. A {\em correlation} of $\Gamma$ is a bijection $\phi$ of $X$ respecting the incidence relation $*$ and such that, for every $x,y \in X$, if $t(x) = t(y)$ then $t(\phi(x)) = t(\phi(y))$. If, moreover, $\phi$ fixes the types of every element (i.e $t(\phi(x)) = t(x)$ for all $x \in X$), then $\phi$ is said to be an {\em automorphism} of $\Gamma$. The group of all correlations of $\Gamma$ is denoted by $\Cor(\Gamma)$ and the automorphism group of $\Gamma$ is denoted by $\Aut(\Gamma)$. Remark that $\Aut(\Gamma)$ is a normal subgroup of $\Cor(\Gamma)$ since it is the kernel of the action of $\Cor(\Gamma)$ on $I$.

If $\Aut(\Gamma)$ is transitive on the set of chambers of $\Gamma$ then we say that $\Gamma$ is {\em flag-transitive}. If moreover, the stabilizer of a chamber in $\Aut(\Gamma)$ is reduced to the identity, we say that $\Gamma$ is {\em simply transitive} or {\em regular}.

A \textit{regular hypertope} is a thin, residually connected and flag-transitive incidence geometry.

Incidence geometries can be obtained from a group $G$ together with a set $(G_i)_{i \in I}$ of subgroups of $G$ as described in~\cite{Tits1957}. 
    The \emph{coset geometry} $\Gamma(G,(G_i)_{i \in I})$ is the incidence geometry over the type set $I$ where:
    \begin{enumerate}
        \item The elements of type $i \in I$ are right cosets of the form $G_i g$, $g \in G$.
        \item The incidence relation is given by non-empty intersection. More precisely, the element $G_i g$ is incident to the element $G_j  k$ if and only if $i\neq j$ and $G_i g \cap G_j k \neq \emptyset$.
    \end{enumerate}

\subsection{C-groups}

A \emph{C-group of rank $n$} is a pair $(G,S)$, where $G$ is a group and $S:=\{\rho_0,\ldots,\rho_{n-1}\}$ is a generating set of involutions of $G$ that satisfy the following condition, called the \emph{intersection property},
$$\forall I,J\subseteq \{0,\ldots,n-1\},\ \langle \rho_i\ | \ i\in I\rangle\ \cap\ \langle \rho_j\ | \ j\in J\rangle\ = \ \langle \rho_k\ |\ k \in I\cap J\rangle.$$
A C-group is said to be a \emph{string C-group} if its generating involutions can be ordered in such a way that, for all $i,j$ where $|i-j|\geq 2$, we have that $(\rho_i\rho_j)^2 = id$.
A subgroup of $G$ generated by all but one involution of $S$ is called a \emph{maximal parabolic subgroup} and is denoted as $$G_i := \langle \rho_j\ |\ j\in I\backslash\{i\}\rangle,$$ with $I:=\{0,\ldots, n-1\}$.
By abuse of notation, when the generating set $S$ is evident, we will say $G$ is a C-group.

One important result regarding regular hypertopes is that their automorphism groups are C-groups, as expressed in the following proposition.
\begin{prop}\label{prop:CosetGeometryfromAction}\cite[Theorem 4.1]{hypertopes}
    Let $G$ be a group acting simply transitively on a regular hypertope $\Gamma$ and let $C$ be a chamber of $\Gamma$. Then $\Aut(\Gamma) = (G, \{\rho_0,\rho_1, \cdots, \rho_{n-1}\})$, where $\rho_i$ is the unique involution sending $C$ to its $i$-adjacent chamber $C^i$, is a $C$-group.
\end{prop}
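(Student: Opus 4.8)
The plan is to route the argument through the observation that the halving geometry is itself a regular hypertope, and then to identify its intrinsic C-group of chamber-adjacency involutions with the halving group $H(G)$.

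First I would establish that $H(\Gamma)(0,1)$ is a regular hypertope. By hypothesis $\Gamma$ is a regular hypertope satisfying $(B_1)$ and $(B_2)$, so its $\{0,1\}$-truncation is the geometry of a simple graph. If that graph is not bipartite, then $H(\Gamma)(0,1)=\PP(\Gamma)$ and Theorem~\ref{thm:mainalpha} applies: $\PP(\Gamma)$ is residually connected, and it is thin and flag-transitive because $\Gamma$ is. If the graph is bipartite, then $H(\Gamma)(0,1)=\BP(\Gamma)$ and the analogous statement of Theorem~\ref{thm:BP_fromDimi} gives the same conclusion. In either case $H(\Gamma)(0,1)$ is thin, residually connected and flag-transitive, hence a regular hypertope.

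Second I would invoke Proposition~\ref{prop:CosetGeometryfromAction} applied to $H(\Gamma)(0,1)$. Fixing a base chamber $D$, the group $A:=\Aut(H(\Gamma)(0,1))$ acts simply transitively on chambers and $(A,\{r_0,\dots,r_{n-1}\})$ is a C-group, where $r_i$ is the unique automorphism sending $D$ to its $i$-adjacent chamber $D^i$. The standard coset-geometry correspondence for regular hypertopes (following from Proposition~\ref{prop:CosetGeometryfromAction} together with \cite{Tits1957}) then gives $H(\Gamma)(0,1)\cong\Gamma(A,(A_i)_{i\in I})$, where $A_i=\langle r_j\mid j\in I\setminus\{i\}\rangle$ is the maximal parabolic stabilizing the type-$i$ element of $D$. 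At this point the theorem reduces to the single identification $(A,\{r_i\})=(H(G),\{\rho_0\rho_1\rho_0,\rho_1,\dots,\rho_{n-1}\})$, under which $A_i=H(G)_i$.

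Third --- the crux --- I would show that each chamber-adjacency involution $r_i$ of $H(\Gamma)(0,1)$ coincides with the corresponding generator of $H(G)$. Since $H(G)=\langle \rho_0\rho_1\rho_0,\rho_1,\dots,\rho_{n-1}\rangle\le G=\Aut(\Gamma)$ and the construction of $H(\Gamma)(0,1)$ is natural in the incidence data of $\Gamma$, I would first verify that $\rho_0\rho_1\rho_0,\rho_1,\rho_2,\dots,\rho_{n-1}$ each induce \emph{type-preserving} automorphisms of $H(\Gamma)(0,1)$; this is the structural reason the halving subgroup $H(G)$, rather than all of $G$, is the relevant group, as $\rho_0$ itself fails to preserve the reconstructed types $0$ and $1$. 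Then, tracking how the base chamber $D$ is assembled from a base chamber $C$ of $\Gamma$ together with the partition data $\tilde{\mathcal{S}}(P)$ of its $\{0,1\}$-truncation, I would compute the action of these words and check that $\rho_0\rho_1\rho_0$ sends $D$ to $D^0$, that $\rho_1$ sends $D$ to $D^1$, and that $\rho_i$ sends $D$ to $D^i$ for $i\ge 2$. By the uniqueness clause of Proposition~\ref{prop:CosetGeometryfromAction} this forces $r_0=\rho_0\rho_1\rho_0$, $r_1=\rho_1$ and $r_i=\rho_i$, whence $A=\langle r_0,\dots,r_{n-1}\rangle=H(G)$ and $A_i=H(G)_i$. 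Combined with the second step, this yields $\Gamma(H(G),(H(G)_i)_{i\in I})\cong H(\Gamma)(0,1)$.

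The main obstacle is the third step, specifically the chamber bookkeeping: I must describe precisely how a chamber of $H(\Gamma)(0,1)$ decomposes in terms of a chamber of $\Gamma$ and the blow-up of its $\{0,1\}$-truncation, and then compute the action of $\rho_0\rho_1\rho_0$ and $\rho_1$ on this decomposition to confirm they produce the correct adjacent chambers. A secondary subtlety is that the bipartite and non-bipartite cases use the two different models $\BP(\Gamma)$ and $\PP(\Gamma)$, so I would need to check that the group-theoretic identification of the adjacency involutions with the generators of $H(G)$ is uniform across both, even though the underlying combinatorial reconstructions of types $0$ and $1$ differ.
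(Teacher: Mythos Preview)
Your proposal does not address the stated proposition. Proposition~\ref{prop:CosetGeometryfromAction} is a general structural fact about \emph{any} regular hypertope: if $G$ acts simply transitively on the chambers of $\Gamma$, then the standard generating set $\{\rho_0,\dots,\rho_{n-1}\}$ (the involutions effecting $i$-adjacency on a fixed base chamber) satisfies the intersection property, so $(G,\{\rho_i\})$ is a C-group. In the paper this result is not proved at all; it is quoted verbatim from \cite[Theorem 4.1]{hypertopes}.

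What you have written is instead a proof sketch of Theorem~\ref{thm:main2} (that the coset geometry of the halving group $H(G)$ is isomorphic to the halving geometry $H(\Gamma)(0,1)$). Two things make the mismatch unmistakable: your argument is specific to geometries satisfying $(B_1)$ and $(B_2)$ and to the halving construction, whereas the proposition involves neither; and you explicitly \emph{invoke} Proposition~\ref{prop:CosetGeometryfromAction} in your second step, so as a proof of that proposition the argument would be circular. A genuine proof of the proposition would have to establish the intersection property $\langle\rho_i\mid i\in J\rangle\cap\langle\rho_i\mid i\in K\rangle=\langle\rho_i\mid i\in J\cap K\rangle$ directly from thinness, residual connectedness and flag-transitivity, as is done in \cite{hypertopes}; nothing about halving is relevant.
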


The \emph{Coxeter diagram} of a C-group $(G,S)$, denoted $D(G,S)$, is the graph whose nodes represent the elements of $S$ and an edge between the generators $\rho_i$ and $\rho_j$ has label $p_{i,j}:=o(\rho_i\rho_j)$, the order of $\rho_i\rho_j$. By convention, edges with label equal to 2 are not drawn, and, whenever an edge has label 3, its label is omitted. When $p_{i,j} = 4$, we omit the label and put a double edge instead of a single edge. For string C-groups, this diagram is linear, and must then be as in Figure~\ref{coxeterdiagram}.
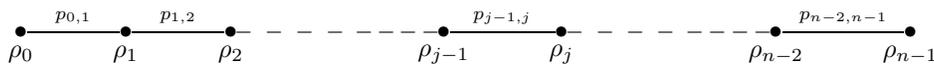
\begin{figure}[h]
 $$\xymatrix@-1.9pc{
*{\bullet} \ar@{-}[rrrrr]^{p_{0,1}} && & &&*{\bullet} \ar@{-}[rrrrr]^{p_{1,2}} && & &&*{\bullet} \ar@{--}[rrrrrr] && && && \ar@{--}[rrrr] && && *{\bullet} \ar@{-}[rrrrr]^{p_{j-1,j}} && & &&  *{\bullet} \ar@{--}[rrrrrr] && && && \ar@{--}[rrrr] && && *{\bullet} \ar@{-}[rrrrr]^{p_{n-2,n-1}} && & && *{\bullet} \\
*{\rho_0} && & &&*{\rho_1} && & && *{\rho_2} && && && && && *{\rho_{j-1}} && & && *{\rho_{j}} && && && && && *{\rho_{n-2}} && & && *{\rho_{n-1}}\\
}$$
 \caption{Coxeter Diagram of a string C-group.}
 \label{coxeterdiagram}
\end{figure}
If $G$ is the automorphism group of a regular hypertope $\Gamma$, the Coxeter diagram of $G$ coincides with the Buekenhout diagram of $\Gamma$.
Hence, if $\Gamma$ is a regular hypertope, we will denote by $D(\Gamma)$ the Coxeter diagram of its automorphism group. If $D(\Gamma)$ is linear, or, equivalently, if the automorphism group of $\Gamma$ is a string $C$-group, then $\Gamma$ is a \emph{(abstract) regular polytope}.
Conversely, if $G$ is a string C-group, the coset geometry $\Gamma(G,(G_i)_{i\in I})$ is a regular hypertope with linear diagram \cite[Theorem 5.1 and Theorem 5.2]{hypertopes}. Thus, regular polytopes are particular cases of regular hypertopes. We say a regular hypertope is \emph{proper} if it does not have a linear Coxeter diagram. Moreover, regular polytopes are in one-to-one correspondence with string C-groups.

We say a $C$-group $(G,S)$ is a \emph{Coxeter group} when its group relations are just the ones given by its Coxeter diagram $D(G,S)$. Hence, every $C$-group is either a Coxeter group or a smooth quotient of a Coxeter group. Given a $C$-group $G$, its universal covering group is the smallest Coxeter group $U$ such that there is a surjective homomorphism from $U$ to $G$. If $\Gamma$ is a geometry with automorphism group $G$, we will also say that $U$ is the universal covering group of $\Gamma$.

The \emph{Schl\"{a}fli type} of a regular polytope $\Gamma$ is defined as the set $\{p_{0,1},p_{1,2},\ldots,p_{n-2,n-1}\}$.
If a regular polytope has Schl\"{a}fli type $\{p_{0,1},p_{1,2},\ldots,p_{n-2,n-1}\}$, its universal covering group will be denoted as $[p_{0,1},p_{1,2},\ldots,p_{n-2,n-1}]$. It is the Coxeter group with relations $p_{0,1},p_{1,2},\ldots,p_{n-2,n-1}$.

In this paper, we will deal with Coxeter groups with various diagrams. We set notations for some of these diagrams and their associated hypertopes. Given a regular hypertope $\Gamma$, if $D(\Gamma)$ is 
$$\xymatrix@-1.5pc{*{\bullet}\ar@{-}[rrrdd]^(0.01){\rho_1}^{p_{1,2}} \\
\\
 &&&*{\bullet}\ar@{-}[rrr]_(0.05){\rho_2}^{p_{2,3}}&&&*{\bullet}\ar@{--}[rrrrr]_(0.01){\rho_3}&&&&&*{\bullet}\ar@{-}[rrrr]_(0.01){\rho_{n-3}}^{p_{n-3,n-2}}&&&&*{\bullet}\ar@{-}[rrrr]_(0.01){\rho_{n-2}}_(0.99){\rho_{n-1}}^{p_{n-2,n-1}}&&&&*{\bullet}\\
 \\
 *{\bullet}\ar@{-}[rrruu]_(0.01){\rho_0}_{p_{0,2}}\\
 } $$
we will say that $\Gamma$ is of Schl\"{a}fli type $\left\{\nfrac{p_{1,2}}{p_{0,2}},p_{2,3},\ldots,p_{n-2,n-1}\right\}$, and its universal covering group will be denoted as $\left[\nfrac{p_{1,2}}{p_{0,2}},p_{2,3},\ldots,p_{n-2,n-1}\right]$. Similarly, if a $D(\Gamma)$ is 
 $$\xymatrix@-1.5pc{
*{\bullet}\ar@{-}[rrrdd]^(0.01){\rho_1}^{p_{1,2}} &&&&&&&&&&&&& &&& && *{\bullet} \\
\\
 &&&*{\bullet}\ar@{-}[rrr]_(0.05){\rho_2}^{p_{2,3}}&&&*{\bullet}\ar@{--}[rrrrr]_(0.01){\rho_3}&&&&&*{\bullet}\ar@{-}[rrrr]_(0.01){\rho_{n-4}}^{p_{n-4,n-3}}&&&&*{\bullet}\ar@{-}[rrruu]^(0.99){\rho_{n-2}}^{p_{n-3,n-2}}\ar@{-}[rrrdd]_(0.01){\rho_{n-3}}_(0.99){\rho_{n-1}}_{p_{n-3,n-1}}\\
 \\
 *{\bullet}\ar@{-}[rrruu]_(0.01){\rho_0}_{p_{0,2}}
 &&&&&&&&&&&&& &&& && *{\bullet} \\
 } $$
we will denote the Schl\"{a}fli type of $\Gamma$ as $\left\{\nfrac{p_{1,2}}{p_{0,2}},p_{2,3},\ldots,p_{n-4,n-3},\nfrac{p_{n-3,n-2}}{p_{n-3,n-1}}\right\}$, and its universal covering group as $\left[\nfrac{p_{1,2}}{p_{0,2}},p_{2,3},\ldots,p_{n-4,n-3},\nfrac{p_{n-3,n-2}}{p_{n-3,n-1}}\right]$.
Finally, if $D(\Gamma)$ is either
 $$\xymatrix@-1.5pc{
*{\bullet}\ar@{-}[dddd]\ar@{-}[rrrr]^(0.01){\rho_1}^(0.99){\rho_2} &&&& *{\bullet}\ar@{-}[dddd] &&&& *{\bullet}\ar@{-}[rrrdd]^(0.01){\rho_1} & &&& && *{\bullet} \\
\\
&&&&&& \textnormal{or}&&&&&*{\bullet}\ar@{-}[rrruu]^(0.99){\rho_{3}}\ar@{-}[rrrdd]_(0.1){\rho_{2}}_(0.99){\rho_{4}} \\
 \\
*{\bullet}\ar@{-}[rrrr]_(0.01){\rho_3}_(0.99){\rho_0} &&&& *{\bullet}  &&&&*{\bullet}\ar@{-}[rrruu]_(0.01){\rho_0}
 & &&& && *{\bullet} \\
 } $$
we will denote their Schl\"{a}fli type as $\{(3,3,3,3)\}$ and $\{3^{1,1,1,1}\}$, respectively, with their respective universal covering groups being denoted by $[(3,3,3,3)]$ and $[3^{1,1,1,1}]$, respectively.

\subsection{Halving operation and Halving group}

The \emph{halving operation} $\eta$, as described in \cite{ARP}, applies only to a regular polyhedron $Q$ of Schl\"{a}fli type $\{4, q\}$ for some $q \geq 3$. It turns $Q$ into a polyhedron $P$ of Schl\"{a}fli type $\{q, q\}$.
This operation, acting on the automorphism group of $Q$, is defined as follows
$$\eta : (\rho_0, \rho_1, \rho_2) \rightarrow (\rho_0\rho_1\rho_0, \rho_2, \rho_1) =: (\varrho_0, \varrho_1, \varrho_2),$$
In other words, the first generator of the automorphism group of $Q$ is substituted by $\rho_0\rho_1\rho_0$, and the other two are swapped.
The generators $\rho_2$ and $\rho_1$ are swapped in order to maintain the linearity of the diagram. Recall that there is a $1$ to $1$ correspondence from string $C$-groups to abstract polytopes, so that this operation, while defined on the groups, can be seen as an operation on polytopes also.
In this paper we will deal with regular hypertopes, where its automorphism group is a C-group not necessarily string. Hence, the ordering of the generators will not be important to us.

Recently~\cite{MonteroWeiss}, the halving operation was revisited and generalized. Let $n\geq 3$ and let $\Gamma$ be a regular non-degenerate polytope of Schl\"{a}fli type $\{p_1,\ldots,p_{n-1}\}$ with automorphism group $G = \Aut(\Gamma) = \langle \rho_0,\ldots,\rho_{n-1}\rangle$. Here, a polytope is non-degenerate if its face poset is a lattice.
The halving operation is then the map 
$$\eta : \langle \rho_0,\rho_1,\rho_2,\ldots,\rho_{n-1}\rangle \rightarrow \langle \rho_0\rho_1\rho_0,\rho_1,\rho_2,\ldots,\rho_{n-1}\rangle =: \langle \tilde{\rho_0},\rho_1,\rho_2,\ldots,\rho_{n-1}\rangle.$$

The \emph{halving group} of $G$ is the image of the halving operation on $\Aut(\Gamma)=G$, being denoted by $H(G)$.
The main result of ~\cite{MonteroWeiss} is that, if $n\geq 3$ and $\Gamma$ is a non-degenerate polytope with $\Aut(\Gamma)=G$, then $H(G) = \langle \tilde{\rho_0}, \rho_1,\ldots,\rho_{n-1}\rangle$ is a C-group \cite[Theorem 3.1]{MonteroWeiss} with the following diagram

$$\xymatrix@-1.5pc{*{\bullet}\ar@{-}[rrrdd]^(0.01){\rho_1}^{p_{1,2}} \\
\\
 &&&*{\bullet}\ar@{-}[rrr]_(0.05){\rho_2}^{p_{2,3}}&&&*{\bullet}\ar@{--}[rrrrr]_(0.01){\rho_3}&&&&&*{\bullet}\ar@{-}[rrrr]_(0.01){\rho_{n-3}}^{p_{n-3,n-2}}&&&&*{\bullet}\ar@{-}[rrrr]_(0.01){\rho_{n-2}}_(0.99){\rho_{n-1}}^{p_{n-2,n-1}}&&&&*{\bullet}\\
 \\
 *{\bullet}\ar@{-}[rrruu]_(0.01){\tilde{\rho_0}}_{p_{1,2}}\ar@{-}[uuuu]^k\\
 } $$
where $k=p_{0,1}$ if $p_{0,1}$ is odd, and $k=\frac{p_{0,1}}{2}$ otherwise. Moreover, the coset geometry of the halving group is a regular hypertope \cite[Corollary 3.2]{MonteroWeiss}. 

The non-degeneracy condition used in \cite{MonteroWeiss} requires the abstract regular polytope $\Gamma$ to be a lattice ~\cite{danzer_regular_1984,ARP,schulte_regular_1985}.
One of the consequences of this non-degeneracy condition on polytopes is that all faces of type $i$ can be considered as a subset of the set of vertices of the polytope $\Gamma$. In addition, given $i\leq j$, every $j$-face of a non-degenerate polytope $\Gamma$ is determined by the set of $i$-faces incident to it \cite{danzer_regular_1984}. In this paper, we will consider weaker conditions to impose on a regular hypertope so that, after the halving operation, the structure is still a regular hypertope.

\section{The partitioned geometry}\label{sec:partitionedgeom}

Let $\Gamma$ be an incidence geometry over the type set $I = \{0,1,\cdots,n-1\}$. Let $\{k,j\}\subset \{0,1,\ldots,n-1\}$ be a subset of $I$ and, for ease of notation, let $\{k,j\}=\{0,1\}$.
Suppose $\Gamma$ satisfies the following conditions:
\begin{enumerate}
    \item[$(B_1)$] The $\{0,1\}$-truncation $\Gamma[0,1]$ of $\Gamma$ is the geometry of a simple graph \label{B1};
    \item[$(B_2)$] For a $1-$element $e$ and an $i-$element $i$ with $i \neq 0,1$, we have that $e * x$ if and only if $\sigma_0(e) \subset \sigma_0(x)$.\label{B2}
\end{enumerate}
As shown in \cite{Leemans2000}, this implies that the diagram of $\Gamma$ is as in Figure \ref{fig:leafDiagram}, where $\mathcal{S}$ is a collection of graphs, $K_2,\cdots, K_{n-1}$ are arbitrary classes of rank $2$ geometries, and where the diagram on $I \setminus \{0,1\}$ is arbitrary.

 Given two vertices $p,q\in\Gamma$ of a graph $\GG$, we use $p \sim q$, and say that $p$ and $q$ are \textit{adjacent}, to mean that the vertices $p$ and $q$ are both incident to some common edge in $\GG$. Whenever a geometry $\Gamma$ satisfies $(B_1)$, we will consider its $\{0,1\}$-truncation $\Gamma[0,1]$ as a graph, and call the $0$ and $1$ elements of $\Gamma$ the vertices and edges of $\Gamma$.
\begin{figure}
\begin{center}
    \begin{tikzpicture}[scale = 0.5]
    
   \filldraw[black] (-2,0) circle (2pt)  node[anchor=north]{$0$};
   \filldraw[black] (2,0) circle (2pt)  node[anchor=north]{$1$};
   \filldraw[black] (7,0.8) circle (0pt)  node[anchor=north]{$\vdots$};
    \filldraw[black] (7,2) circle (2pt)  node[anchor=west]{$3$};
    \filldraw[black] (7,-2) circle (2pt)  node[anchor=south]{$n-2$};
    \filldraw[black] (7,4) circle (2pt)  node[anchor=west]{$2$};
    \filldraw[black] (7,-4) circle (2pt)  node[anchor=south]{$n-1$};
    \draw (2,0) -- (-2,0)node [midway,above] (TextNode) {$\mathcal{S}$};
    \draw (2,0) -- (7,2)node [midway] (TextNode) {$K_3$};
    \draw (2,0) -- (7,4)node [midway] (TextNode) {$K_2$};
    \draw (2,0) -- (7,-2)node[midway] (TextNode) {$K_{n-2}$};
    \draw (2,0) -- (7,-4)node[midway] (TextNode) {$K_{n-1}$};
    \draw (7,0) ellipse (2cm and 5cm);

    \end{tikzpicture}
    \end{center}
    \caption{The diagram of a geometry that satisfies $(B_1)$ and $(B_2)$.}
    \label{fig:leafDiagram}
 \end{figure}
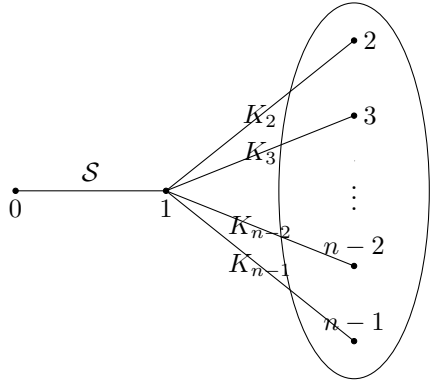

In \cite{Leemans2000}, the authors introduce the construction of a geometry, that we will denote by $\BP(\Gamma)(0,1)$. Here, the starting geometry $\Gamma$ must satify both $(B_1)$ and $(B_2)$, and $\Gamma[0,1]$ must be bipartite. 

\begin{construction}\label{BPRH}\cite[Construction 6.2]{Leemans2000}
Let $\Gamma := (\cup_{i\in I}^{n-1}S_i,I,t,*)$ be a geometry of (possibly infinite) rank $n\geq 3$ satisfying $(B_1)$ and $(B_2)$ for the pair $\{0,1\}$, with the diagram of Figure~\ref{fig:leafDiagram}, where $\mathcal{S}$ is a class of graphs, $K_i$ are arbitrary classes of rank two geometries and the diagram on $I\backslash\{0,1\}$ is arbitrary. Moreover let the $\{0,1\}$-truncation be the geometry of bipartite graph with partition $\{X_0,X_1\}$ on the vertex set of the $\{0,1\}$-truncation. We define a geometry $\BP(\Gamma)(0,1):=(\cup_{i=0}^{n-1}X_i,I',t',*')$ such that:
\begin{itemize}
    \item \begin{equation*}
    X_i=\begin{cases}
    S_i, & \text{if $i\notin\{0,1\}$}.\\
    X_i, & \text{if $i\in\{0,1\}$}.
  \end{cases}
\end{equation*}
    \item $I' = I$
    \item $t'(X_i) = i$
    \item for $x\in X_i$ and $y\in X_j$, with $i\neq j$ then
        \begin{itemize}
            \item if $\{i,j\}\neq\{0,1\}$, then $x*'y$ if and only if $x* y$
            \item if $\{i,j\}=\{0,1\}$, then $x*' y$ if and only if $x\thicksim y$ in the $\{0,1\}$-truncation.
        \end{itemize}
\end{itemize}
\end{construction}

In the same paper, the authors introduce the construction of a geometry $\tilde{\Gamma}(0,1)$, starting from a geometry $\Gamma$ satisfying both $(B_1)$ and $(B_2)$, with $\Gamma[0,1]$ being non-bipartite. 

\begin{construction}\label{NBPRH}\cite[Construction 5.2]{Leemans2000}
Let $\Gamma := (\cup_{i\in I}^{n-1}S_i,I,t,*)$ be a geometry of (possibly infinite) rank $n\geq 3$ satisfying $(B_1)$ and $(B_2)$ for the pair $\{0,1\}$, with the diagram of Figure~\ref{fig:leafDiagram}, where $\mathcal{S}$ is a class of graphs, $K_i$ are arbitrary classes of rank two geometries and the diagram on $I\backslash\{0,1\}$ is arbitrary. Moreover let the $\{0,1\}$-truncation be the geometry of a non-bipartite graph. Let $X_0 := \{(v,0):v\in S_0\}$ and $X_1 := \{(v,1):v\in S_0\}$.  We define a geometry $\tilde{\Gamma}(0,1):=(\cup_{i=0}^{n-1}X_i,I',t',*')$ such that:
\begin{itemize}
    \item \begin{equation*}
    X_i=\begin{cases}
    S_i, & \text{if $i\notin\{0,1\}$}.\\
    X_i, & \text{if $i\in\{0,1\}$}.
  \end{cases}
\end{equation*}
    \item $I' = I$
    \item $t'(X_i) = i$
    \item for $x\in X_i$ and $y\in X_j$, with $i\neq j$ then
        \begin{itemize}
            \item if $\{i,j\}\neq\{0,1\}$, then $x*'y$ if and only if $x* y$
            \item if $\{i,j\}=\{0,1\}$, then, as $x=(v_x,i)$ and $y=(v_y,j)$, $x*' y$ if and only if $v_x\thicksim v_y$ in the $\{0,1\}$-truncation of $\Gamma$.
        \end{itemize}
\end{itemize}
\end{construction}

The main problem is that Construction~\ref{NBPRH} fails to be residually connected if the $\{0,1\}$-truncation of any of its residues is a bipartite graph (see \cite[Theorem 4.1]{Leemans2000}). Our first task is to address that issue.

In what follows, we will introduce a new construction which extends Construction~\ref{NBPRH}. This new construction is isomorphic to the one of Construction~\ref{NBPRH} when the $\{0,1\}$-truncation of all residues of $\Gamma$ are non-bipartite.

\subsection{Partitioned Neighbourhood Geometry}\hfill \label{subsec:partitiongeom}
\\

Let $\GG$ be a connected graph with vertex set $V$. We define an equivalence relation on the vertices of $\GG$ by setting, for two vertices $p$ and $q$, that $p \equiv q$ if and only if there exists a path of even length from $p$ to $q$ in $\GG$. Let $\pi(\GG)$ be the set of equivalence classes of $\GG$ under that relation. It is straightforward to check that $|\pi(\GG)| = 1$ if $\GG$ contains an odd cycle and $|\pi(\GG)|= 2$ otherwise. Note that not having any cycle of odd length is the same as being bipartite. In that case, the equivalence classes correspond to the proper $2$-colorings of $\GG$. For $P \in \pi(\GG)$, we define $\Bar{P}$ to be $V \setminus P$ if $|\pi(\GG)|= 2$ and $\Bar{P} = P$ if $|\pi(\GG)|= 1$.

\begin{lemma}\label{lem:adjacent}
    If $p$ and $q$ are adjacent vertices of $\GG$ and $P \in \pi(\GG)$, then $p \in P$ if and only if $q \in \Bar{P}$.
\end{lemma}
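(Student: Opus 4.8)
The plan is to unwind the definitions and argue by cases on whether $\GG$ is bipartite, i.e.\ on whether $|\pi(\GG)| = 1$ or $|\pi(\GG)| = 2$. Recall that $P \in \pi(\GG)$ is an equivalence class under the relation ``joined by a path of even length,'' and that $\Bar P = V \setminus P$ when $|\pi(\GG)| = 2$ and $\Bar P = P$ when $|\pi(\GG)| = 1$. The statement to prove is that for adjacent $p, q$ and any $P \in \pi(\GG)$, we have $p \in P \iff q \in \Bar P$.

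First I would dispose of the odd-cycle (non-bipartite) case: here $|\pi(\GG)| = 1$, so $P = V = \Bar P$, and both $p \in P$ and $q \in \Bar P$ hold unconditionally, making the biconditional trivially true. So assume $\GG$ is bipartite, so $|\pi(\GG)| = 2$, say $\pi(\GG) = \{P, V \setminus P\}$, and $\Bar P = V \setminus P$. Now the claim becomes: $p \in P \iff q \notin P$, i.e.\ $p$ and $q$ lie in different equivalence classes. The key observation is that $p$ and $q$ cannot be in the same class: if they were, there would be a path of even length joining them, and appending the edge $pq$ would produce a closed walk of odd length through $p$, hence (by the standard fact that a graph with a closed odd walk contains an odd cycle) $\GG$ would have an odd cycle, contradicting bipartiteness. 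Therefore $p$ and $q$ lie in distinct classes, which since there are exactly two classes means exactly one of $p \in P$, $q \in P$ holds; equivalently $p \in P \iff q \in V \setminus P = \Bar P$, as desired.

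The only mild subtlety — and the step I would be most careful about — is the passage from ``closed walk of odd length'' to ``odd cycle,'' together with the basic fact that in a connected graph the even-path relation really is an equivalence relation whose classes are the two color classes when $\GG$ is bipartite (reflexivity via the length-$0$ path, symmetry by reversing a path, transitivity by concatenation; well-definedness of the two-class structure because any two even-paths between the same endpoints have lengths of the same parity precisely when there is no odd cycle). These are entirely standard graph-theoretic facts, already implicitly invoked in the paragraph preceding the lemma, so I would cite them briefly rather than reprove them. In short, the proof is a one-line case split plus the elementary parity argument; there is no real obstacle, only bookkeeping about which of $P$ and $\Bar P$ is which in each case.
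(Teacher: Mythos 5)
Your proof is correct and follows essentially the same route as the paper: a case split on bipartiteness, with the non-bipartite case trivial and the bipartite case settled by the parity argument. The only cosmetic difference is that the paper phrases the bipartite case via a proper $2$-coloring whose color classes are $P$ and $\Bar{P}$, whereas you derive the same fact directly from the even-path equivalence relation via the odd-closed-walk observation; the underlying argument is identical.
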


\begin{proof}
    If $\GG$ is not bipartite, $P = \Bar{P} = \GG$ and the lemma holds trivially. If $\GG$ is bipartite, there exists a proper $2$-coloring of $\GG$ such that $P$ corresponds to the sets of vertices of $\GG$ of a given color. The set $\Bar{P}$ then corresponds to the vertices of the other color. Since two adjacent vertices cannot have the same color, the lemma holds.
\end{proof}

Let $P \in \pi(\GG)$ and let $V$ and $E$ be the vertex and edge set of the connected graph $\GG$. The \textit{partitioned neighborhood geometry} $\Tilde{\GG}$ of $\GG$ with respect to $P$ is the rank two geometry $\Tilde{\GG}(P) = (P \times \{0\} \cup \Bar{P} \times \{1\}, \{0,1\}, \Tilde{t},\tilde{*})$ where 
\begin{itemize}
    \item $\Tilde{t}(p,i) = i$ for any $p \in V$ and $i =0,1$.
    \item $(p,0) \Tilde{*} (q,1)$ if and only if $\{p,q\} \in E$.
\end{itemize}

In particular, when $\GG$ is not bipartite, and hence $P = V$, we recover the more classical definition of neighborhood geometry, as defined in \cite{Leemans2000}.

\begin{lemma}\label{lem:PnbghGeom}
    $\Tilde{\GG}(P)$ is connected if and only if $\GG$ is connected
\end{lemma}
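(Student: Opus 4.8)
The plan is to prove both implications of the biconditional directly, working with the incidence graph of $\tilde{\GG}(P)$. First I would set up notation: let $H$ denote the incidence graph of the rank two geometry $\tilde{\GG}(P)$, whose vertex set is $P \times \{0\} \sqcup \bar P \times \{1\}$, with $(p,0)$ joined to $(q,1)$ exactly when $\{p,q\} \in E$. The easy direction is the contrapositive of "$\tilde{\GG}(P)$ connected $\Rightarrow$ $\GG$ connected": if $\GG$ is disconnected, pick vertices $p, q$ in different components of $\GG$; then no alternating path in $H$ can join any element sitting over $p$ to any element sitting over $q$, since each edge of $H$ projects (forgetting the second coordinate) to an edge of $\GG$, so a walk in $H$ projects to a walk in $\GG$. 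Hence $H$ is disconnected. Actually, since the statement as phrased presupposes $\GG$ connected in the definition (the partitioned neighbourhood geometry is defined for a connected graph $\GG$), the substantive content is the forward direction, and I would phrase the lemma's proof accordingly, but I'd still record this projection observation as it is the mechanism for both directions.

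For the main direction, assume $\GG$ is connected; I want to show $H$ is connected. Take two vertices $(p,\epsilon)$ and $(q,\delta)$ of $H$. Since $\GG$ is connected there is a path $p = v_0, v_1, \ldots, v_k = q$ in $\GG$. The idea is to lift this path to a walk in $H$ by alternating the second coordinate. The key point is consistency of the coordinates with the partition: by Lemma~\ref{lem:adjacent}, consecutive vertices $v_i, v_{i+1}$ of the path lie in $P$ and $\bar P$ in alternating fashion (in the bipartite case), or all coordinate choices are available (in the non-bipartite case where $P = \bar P = V$). So I can choose the second coordinates along the lifted walk to alternate between $0$ and $1$, producing a genuine walk $(v_0, c_0), (v_1, c_1), \ldots, (v_k, c_k)$ in $H$ where each $(v_i, c_i)$ is a legitimate element of $\tilde{\GG}(P)$ and each consecutive pair is $\tilde *$-incident. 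This connects $(p, c_0)$ to $(q, c_k)$. It then remains to connect $(p, c_0)$ to $(p, \epsilon)$ and $(q, c_k)$ to $(q, \delta)$ when the parities don't match. For this I use that $\GG$, being connected with at least one edge, lets me find a neighbour $w$ of $p$ in $\GG$, giving a length-two walk $(p, c_0), (w, 1 - c_0), (p, c_0)$ — wait, that returns to the same coordinate; instead, to switch the coordinate at $p$ I need an odd closed walk through $p$, which exists precisely when $\GG$ is non-bipartite. In the bipartite case, however, the coordinate of any element over $p$ is forced by which class of $\pi(\GG)$ contains $p$, so $\epsilon = c_0$ automatically and there is nothing to fix; in the non-bipartite case $P = V$ and any odd cycle (which exists) through or reachable from $p$ lets me swap. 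This case analysis on bipartiteness is the main obstacle, and it is exactly the place where Lemma~\ref{lem:adjacent} and the $|\pi(\GG)| \in \{1,2\}$ dichotomy do the work.

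Concretely I would structure the proof as: (1) observe the projection $\tilde{\GG}(P) \to \GG$ on first coordinates sends incident pairs to edges, giving the "$\Leftarrow$"/contrapositive direction immediately; (2) for "$\Rightarrow$", fix two vertices of $H$, project to their underlying vertices of $\GG$, take a path between them in $\GG$; (3) lift the path edge-by-edge, at each step toggling the second coordinate, and invoke Lemma~\ref{lem:adjacent} to check the toggled coordinate is the correct one for the partition class of the new vertex; (4) handle the endpoints: in the bipartite case the second coordinate of every element is determined by its class so both endpoints already have the right coordinate, and in the non-bipartite case $P = \bar P = V$ so both coordinates are always available and one inserts a short detour through any edge of $\GG$ to adjust parity if needed. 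Since the constructed walk stays inside $H$ throughout, $H$ is connected, completing the proof. The whole argument is elementary graph theory once Lemma~\ref{lem:adjacent} is in hand; the only subtlety worth stating carefully is why no parity obstruction arises, and that is resolved by the bipartite/non-bipartite split.
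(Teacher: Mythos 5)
Your proof is correct and uses essentially the same mechanism as the paper's: both reduce connectivity of $\Tilde{\GG}(P)$ to the existence of walks of the appropriate parity in $\GG$, with Lemma~\ref{lem:adjacent} (equivalently, the definition of the classes in $\pi(\GG)$ via even paths) handling the bipartite case and odd cycles handling the non-bipartite case. One slip to fix in your concluding step (4): a detour across a single edge changes the walk length by two and so cannot adjust parity --- the correct device is the odd closed walk obtained by routing through an odd cycle, which you already identified correctly in the body of your argument.
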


\begin{proof}
    There exists a path from $(p,i)$ to $(q,i)$ in $\Tilde{\GG}(P)$ if and only if there is a path of even length from $p$ to $q$ in $\GG$. 
    By definition of $\Tilde{\GG}(P)$, if $(p,i)$ and $(q,i)$ are in $\Tilde{\GG}(P)$, we have that $p \equiv q$. But $p \equiv q$ if and only if there is a path of even length between $p$ and $q$. Similarly, there exists a path from $(p,i)$ to $(q,i + 1 \pmod 2)$ in $\Tilde{\GG}(P)$ if and only if there is a path of odd length from $p$ to $q$ in $\GG$ and $(p,i)$ and $(q,i+ 1 \pmod 2)$ are in $\Tilde{\GG}(P)$ if and only if $p$ is not in the same equivalence class as $q$.
\end{proof}

The gonality $g$ of $\GG$ is the smallest number such that there exists a circuit of length $2g$ in the incidence graph of $\GG$.

\begin{lemma}\label{lem:gonal}
    Let $g$ be the gonality of $\GG$. If $g$ is even, then the gonality $\tilde{g}$ of $\tilde{\GG}(P)$ is $g/2$. If $g$ is odd then the gonality $\tilde{g}$ of $\tilde{\GG}(P)$ is:
    \begin{itemize}
        \item either $\tilde{g}=g$, if $\GG$ has no circuit of even length smaller than $2g$;
        \item or $\tilde{g}=k/2$, if $\GG$ admits a circuit of even length $k<2g$ and $k$ is minimal for that property (with $g<k$).
    \end{itemize}
\end{lemma}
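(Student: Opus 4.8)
The plan is to translate the question into one about short closed walks in $\GG$. Recall from the background that the gonality of a rank two geometry is half the girth of its (bipartite) incidence graph, so that $g$ is exactly the girth of $\GG$ and $2\tilde{g}$ is the girth of the incidence graph of $\tilde{\GG}(P)$. The first step is the dictionary. Reading a circuit of length $2k$ in the incidence graph of $\tilde{\GG}(P)$ starting from a point, as $(p_0,0),(q_0,1),(p_1,0),(q_1,1),\dots,(p_{k-1},0),(q_{k-1},1)$, the definition of $\tilde{*}$ turns it into a closed walk $p_0\,q_0\,p_1\,q_1\cdots p_{k-1}\,q_{k-1}\,p_0$ of length $2k$ in $\GG$ in which the $k$ vertices in even positions are pairwise distinct and the $k$ vertices in odd positions are pairwise distinct; call such a closed walk \emph{non-degenerate}. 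Conversely a non-degenerate closed walk of length $2k$ lifts to such a circuit, where in the bipartite case one first shifts the starting index by one if necessary so that the even-position vertices lie in $P$ --- legitimate because adjacent vertices of $\GG$ lie in $P$ and $\Bar{P}$ respectively by Lemma~\ref{lem:adjacent}. Hence $\tilde{g}$ is the least $k$ for which $\GG$ has a non-degenerate closed walk of length $2k$ (undefined precisely when $\GG$ is a tree, a case the statement excludes since then $g$ is neither even nor odd).

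The second step is the two upper bounds. An even cycle of $\GG$ of length $2m$ is by itself a non-degenerate closed walk, so $\tilde{g}\le m$; in particular $\tilde{g}\le g/2$ when $g$ is even, and $\tilde{g}\le k/2$ when $\GG$ has an even cycle and $k$ is the length of a shortest one. Moreover, traversing an odd cycle $c_0\,c_1\cdots c_{g-1}\,c_0$ twice in the same direction produces a closed walk of length $2g$ which, because $g$ is odd so that the two passes interleave, is non-degenerate (a routine parity check); hence $\tilde{g}\le g$ whenever $\GG$ has a cycle of length $g$ with $g$ odd.

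The third step is the matching lower bounds. Let $W$ be a non-degenerate closed walk of minimal length $2k$. If $W$ repeats no vertex it is an even cycle of $\GG$, so $2k$ is at least the length of a shortest even cycle, which equals $g$ when $\GG$ is bipartite. If $W$ repeats some vertex, then by non-degeneracy its two occurrences sit in positions of opposite parity, so cutting $W$ there splits it into two closed walks whose lengths are both odd (they add to the even number $2k$ and one of them is odd by the parity of the cut); each odd closed walk contains an odd cycle and so has length at least $g$, whence $2k\ge 2g$. Assembling the bounds: if $g$ is even, the bipartite subcase, together with the fact that a non-bipartite $\GG$ with even girth already realizes $g$ by an even cycle, forces $\tilde{g}=g/2$; if $g$ is odd (so $\GG$ is non-bipartite and every even cycle has length exceeding $g$), we get $\tilde{g}=g$ when $\GG$ has no even cycle of length below $2g$, and $\tilde{g}=k/2$ when the shortest even cycle has length $k$ with $g<k<2g$.

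I expect the main obstacle to be the second case of the lower bound: showing cleanly that a minimal non-degenerate closed walk which fails to be a cycle must decompose into two \emph{odd} closed walks, and hence has length at least $2g$. This is pure parity bookkeeping along the walk, and it is also the point at which the bipartite and non-bipartite situations genuinely differ --- the incidence graph of $\tilde{\GG}(P)$ is a copy of $\GG$ when $\GG$ is bipartite but the bipartite double cover of $\GG$ when $\GG$ is not, and only in the latter can a single vertex of $\GG$ occupy both an even and an odd position of a closed walk coming from a circuit.
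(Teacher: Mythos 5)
Your proof is correct, but it takes a genuinely different route from the paper. The paper's own proof is a two-line case split: when $|\pi(\GG)|=1$ it observes that $\tilde{\GG}(P)$ coincides with the classical neighborhood geometry and simply cites Lemma 3.6 of the Percsy--Percsy--Leemans paper for the whole odd/even analysis, and when $|\pi(\GG)|=2$ it notes that $\GG$ is bipartite, so $g$ is even and the incidence graph of $\tilde{\GG}(P)$ is just $\GG$ itself, giving $\tilde{g}=g/2$ directly. You instead give a self-contained argument through the dictionary between circuits of the incidence graph of $\tilde{\GG}(P)$ and closed walks in $\GG$ whose even-position vertices are pairwise distinct and whose odd-position vertices are pairwise distinct (equivalently, you are working with the bipartite double cover in the non-bipartite case and with $\GG$ itself in the bipartite case), and you re-derive the cited lemma: even cycles and doubled odd cycles give the upper bounds, and the parity argument splitting a non-cycle minimal walk at a repeated vertex into two odd closed walks, each containing an odd cycle of length at least $g$, gives the matching lower bounds; your use of Lemma~\ref{lem:adjacent} to shift the starting index in the bipartite case is the right fix for the only delicate point of the dictionary. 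What the paper's approach buys is brevity by outsourcing the non-bipartite case; what yours buys is a uniform, citation-free treatment that makes the combinatorial mechanism (and the role of the double cover versus the identity cover) explicit, at the cost of the parity bookkeeping you correctly identify as the crux.
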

\begin{proof}
    Let $\GG$ be a graph and let $\pi(\GG)$ be the set of equivalence classes of $\GG$ as defined above.
    If $|\pi(\GG)|=1$, we have that the partitioned neighbourhood geometry $\tilde{\GG}(P)$ is equivalent to the neighbourhood geometry of \cite{Leemans2000}. Hence, for this case the results follow from Lemma 3.6 of \cite{Leemans2000}.
    If $|\pi(\GG)|=2$, then $\GG$ is bipartite and, hence, all circuits in $\GG$ are of even length, hence $g$ is even. By taking $P\in\pi(\GG)$, we have that the smallest circuit of the incidence graph of $\tilde{\GG}(P)$ has length $g$, hence $\tilde{g}=g/2$.
\end{proof}

Note that, in $\Gamma$, condition ($B_1$) guarantees that the $\{0,1\}-$truncations of any residues of cotype containing $\{0,1\}$ are simple graphs. Hence, it makes sense to talk about their partitioned neighborhood geometries. In particular, for any element $x \in \Gamma$ of type $i \neq 0,1$, we can consider the set $\pi(\Gamma_x[0,1])$. For the rest of the article, we will call the $0$-elements of $\Gamma$ the \textit{vertices} of $\Gamma$ and the $1$-elements of $\Gamma$ the \textit{edges} of $\Gamma$. By $(B_1)$, any edge $e \in \Gamma$ has exactly two incident vertices $p$ and $q$. Moreover, by $(B_1)$, any two vertices $p$ and $q$ either define exactly one edge $e=\{p,q\}$ or none at all (i.e. the edges are uniquely vertex-describable). We will then write $e = \{p,q\}$.

\subsection{Partitioned Geometry}\hfill
\\

We now introduce a new construction, called the \textit{partitioned geometry} of $\Gamma$, for residually connected geometries $\Gamma$ that satisfy $(B_1)$ and $(B_2)$. It is a variant of Construction 5.1 in \cite{Leemans2000}. The organization and proofs of this section loosely follow the ones in \cite{Leemans2000}.

Let $\Gamma$ be a incidence geometry with diagram as in Figure \ref{fig:leafDiagram}. Then, by condition $(B_1)$, for any $x \in \Gamma$, $\Gamma_x[0,1]$, the $\{0,1\}-$truncation of the residue of $x$, is the geometry of a simple graph. If, moreover, $\Gamma$ is residually connected, $\Gamma_x[0,1]$ must be the geometry of a connected graph.

\begin{construction} \label{constr:notBipartite}
    Let $\Gamma = (\cup_{i=0}^{n-1}S_i,I,t,*)$ be a residually connected geometry that satisfies $(B_1)$ and $(B_2)$, and such that $\Gamma[0,1]$ is not bipartite. The partitioned geometry $\PP(\Gamma)(0,1) = (\cup_{i=0}^{n-1}X_i,I',t',*')$ is defined as follows
    \begin{itemize}
        \item $X_i = \{(x,P_x) \mid x \in S_i,P_x \in \pi(\Gamma_{x}[0,1])\}$ for $i \neq 0,1$ and $X_i = S_0 \times \{i\} $ for $i = 0,1$
        \item $I' = I$
        \item $t'(X_i) = i$
        \item given $\xi \in X_i$ and $\psi \in X_j$, with $i \neq j$, we define
        \begin{itemize}
            \item if $i,j \notin \{0,1\}$, we have $\xi =(x,P_x) *' (y,P_y) = \psi$ if and only if $x * y$ and $P_x \cap P_y \neq \emptyset$
            \item if $i = 0$ and $j \neq 1$, we have that $\xi=(p,0) *'(y,P_y)=\psi$ if and only if $p*y$ and $p \in P_y$
            \item if $i = 1$ and $j \neq 0$, we have that $\xi=(q,1) *'(y,P_y)=\psi$ if and only if $q*y$ and $q \in \Bar{P_y}$
            \item if $i,j \in\{0,1\}$, we have that $\xi = (p,i) *'(q,j) =\psi$ if and only if $p \sim q$ in $\Gamma[0,1]$, seen as a graph. 
        \end{itemize}
    \end{itemize}
\end{construction}

For lightening the notations, whenever there is no risk of confusion, we will write $\PP(\Gamma)$ instead of $\PP(\Gamma)(0,1)$. We start by a technical lemma, that shows that the partitions of pairwise incident elements must always all intersect non-trivially.

\begin{lemma}\label{lem:tripleintersection}
    Let $x,y,z$ be three elements of $\Gamma$ of different types, not equal to $0$ or $1$, and let $\{(x,P_x),(y,P_y),(z,P_z)\}$ be a flag of $\PP(\Gamma)$. Then $P_x \cap P_y \cap P_z$ is non empty.
\end{lemma}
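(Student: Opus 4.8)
The plan is to reduce the triple-intersection statement to Lemma~\ref{lem:adjacent} by exploiting the incidence relations in Construction~\ref{constr:notBipartite} together with the fact that each of $x,y,z$ is incident, inside $\Gamma$, to a common vertex and a common edge. First I would note that since $\{(x,P_x),(y,P_y),(z,P_z)\}$ is a flag of $\PP(\Gamma)$, the elements $x,y,z$ are pairwise incident in $\Gamma$, and so $\{x,y,z\}$ is a flag of $\Gamma$ of cotype containing $\{0,1\}$. By residual connectedness (and the fact that all maximal flags of $\Gamma$ are chambers), this flag extends to a chamber; in particular there is a vertex $p\in S_0$ and an edge $e=\{p,q\}\in S_1$ with $p * x, p * y, p * z$ and $e * x, e * y, e * z$. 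By condition $(B_2)$, incidence of the edge $e$ with each of $x,y,z$ forces $\sigma_0(e)=\{p,q\}\subset \sigma_0(x)\cap\sigma_0(y)\cap\sigma_0(z)$, so in fact both $p$ and $q$ lie in $\Gamma_x[0,1]$, $\Gamma_y[0,1]$ and $\Gamma_z[0,1]$, and $\{p,q\}$ is an edge in each of these (connected, simple) graphs.

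Next I would use this common edge to pin down the partition classes. For each $w\in\{x,y,z\}$, the pair $P_w\in\pi(\Gamma_w[0,1])$ is either all of the vertex set (if $\Gamma_w[0,1]$ is non-bipartite) or one side of the bipartition. Since $p$ and $q$ are adjacent in $\Gamma_w[0,1]$, Lemma~\ref{lem:adjacent} applied inside $\Gamma_w[0,1]$ gives: $p\in P_w$ if and only if $q\in\overline{P_w}$. The key observation is then a dichotomy: for each $w$, either $p\in P_w$ or $q\in P_w$ (and when $\Gamma_w[0,1]$ is non-bipartite, both hold since $P_w=\overline{P_w}$ is the whole vertex set). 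So I would argue that, up to relabelling, one of $p,q$ — say $p$ — lies in $P_w$ for all three $w$ simultaneously. To get this I would look at the pairwise incidences $(x,P_x)*'(y,P_y)$ etc., which by Construction~\ref{constr:notBipartite} say $P_x\cap P_y\neq\emptyset$, $P_x\cap P_z\neq\emptyset$, $P_y\cap P_z\neq\emptyset$; combined with the edge $\{p,q\}$ sitting inside all three residues this should force a consistent choice, giving $p\in P_x\cap P_y\cap P_z$ and hence the result.

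The main obstacle I anticipate is precisely this last coherence step: pairwise nonempty intersection of the $P_w$ does not a priori give a common point, so I need the rigidity coming from the shared edge $\{p,q\}$. The cleanest way is probably to fix $p$, set $J=\{w: p\in P_w\}$, and show that if some $w_0\notin J$ then $q\in P_{w_0}$ (by the Lemma) while $q\notin P_w$ for $w\in J$ with $\Gamma_w[0,1]$ bipartite — then $P_{w_0}\cap P_w$ would be disjoint from $\{p,q\}$, and one must derive a contradiction with $P_{w_0}\cap P_w\neq\emptyset$. This requires knowing that the only "new" structure introduced by $\pi$ is the choice of a side containing $p$ or $q$; since any common element $r\in P_{w_0}\cap P_w$ would have to be joined to both $p$ and $q$-sides consistently, I would trace a path in the common residue witnessing $r\equiv p$ or $r\equiv q$ and use the parity/colouring argument of Lemma~\ref{lem:PnbghGeom}'s proof to conclude $p$ itself must lie in $P_{w_0}$. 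If instead all three residues are non-bipartite the statement is immediate since then $P_x\cap P_y\cap P_z$ is the whole (nonempty) common vertex set. I would present the bipartite sub-cases by a short case analysis on how many of $\Gamma_x[0,1],\Gamma_y[0,1],\Gamma_z[0,1]$ are bipartite, in each case using Lemma~\ref{lem:adjacent} on the edge $\{p,q\}$ to force membership of $p$ (after possibly swapping $p\leftrightarrow q$) in every $P_w$.
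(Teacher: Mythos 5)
Your proposal is correct in substance, and its decisive mechanism is the same as the paper's: when two of the residues, say $\Gamma_x[0,1]$ and $\Gamma_y[0,1]$, are bipartite and $r\in P_x\cap P_y$ is a common element, one takes a path inside the common residue $\Gamma_{\{x,y\}}[0,1]$ --- connected because $\Gamma$ is residually connected, and a subgraph of both truncations --- and transfers the parity of that path from one bipartition to the other; this is exactly how the paper pushes a single common vertex $p$ (assumed, up to the obvious adjustment, to lie in $P_x$) into $P_y$ and $P_z$. Where you differ is the scaffolding: the paper needs only a common vertex, whereas you extract a common edge $e=\{p,q\}$ from a chamber, invoke $(B_2)$ to place both endpoints in all three residues, and then run Lemma~\ref{lem:adjacent} together with a dichotomy on $J=\{w : p\in P_w\}$, deriving a contradiction when the sides disagree. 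Your route is longer and the chamber/$(B_2)$ step is not actually needed, but it yields the tidy conclusion that one endpoint of a common edge lies in $P_x\cap P_y\cap P_z$ and treats the mixed bipartite/non-bipartite cases explicitly rather than by the paper's quick dismissal. When writing it up, make two things explicit at the contradiction step: the connectivity of $\Gamma_{\{x,y\}}[0,1]$ (this, not Lemma~\ref{lem:PnbghGeom}, is where residual connectedness enters), and the fact that a path in this common subgraph has the same length viewed in $\Gamma_x[0,1]$ and in $\Gamma_y[0,1]$, so that evenness forced by bipartiteness of $\Gamma_x[0,1]$ (since $r,p\in P_x$) forces $p$ into the class of $r$ in $\Gamma_y[0,1]$; with that spelled out, your argument is complete.
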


\begin{proof}
    If any of $\Gamma_x[0,1], \Gamma_y[0,1]$ or $\Gamma_z[0,1]$ is not bipartite, the lemma holds trivially. We can thus suppose that they are all bipartite. $\Gamma$ is a geometry and $\{x,y,z\}$ is a flag in $\Gamma$. Hence, there exists a vertex $p$ of $\Gamma$ such that $p$ is incident to all three of $x,y$ and $z$. Without loss of generality, suppose that $p \in P_x$. We claim that $p \in P_x \cap P_y \cap P_z$. By definition of incidence in $\PP(\Gamma)$, we have that $P_x \cap P_y$ is non empty. Hence, there exists a vertex $q \in P_x \cap P_y$. This means that there is a bipartition of $\Gamma_x[0,1]$ in which $p$ and $q$ are in the same component. Since $\Gamma$ is residually connected, the graph $\Gamma_x[0,1] \cap \Gamma_y[0,1]$ is then also connected and bipartite, with $p$ and $q$ still in the same component. The vertices $p$ and $q$ must then also be in the same partition of $\Gamma_y[0,1]$. Hence, $p \in P_y$. We get in the exact same way that $p \in P_z$.
\end{proof}

\begin{thm}\label{PGammageom}
    The incidence system $\PP(\Gamma)$ is a geometry.
\end{thm}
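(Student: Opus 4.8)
The plan is to verify the two defining properties of an incidence geometry for $\PP(\Gamma)$: first that $\PP(\Gamma)$ is an incidence system (which is essentially built in by construction, since the type map only relates elements of distinct types and incidence is symmetric and reflexive), and second, the substantive point, that every maximal flag of $\PP(\Gamma)$ is a chamber, i.e. has type set $I' = I$. So the real content is: given a flag $F$ of $\PP(\Gamma)$ whose type set $J \subsetneq I$ omits some type $k \in I$, one must produce an element of $\PP(\Gamma)$ of type $k$ incident to all of $F$, showing $F$ is not maximal.

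First I would handle the case $k \notin \{0,1\}$. Projecting $F$ down to $\Gamma$ by forgetting the partition data (replacing each $(x,P_x)$ by $x$, and each $(p,0)$ or $(p,1)$ by the vertex $p$ of $\Gamma$) gives a flag $F_0$ of $\Gamma$ — here one must check, using Lemma~\ref{lem:adjacent} and the incidence rules of Construction~\ref{constr:notBipartite}, that distinct elements of $F$ really do project to pairwise incident, distinct elements of $\Gamma$ (the only subtlety is when $F$ contains both a $(p,0)$ and a $(q,1)$ coming from the same vertex orbit; by $(B_1)$ these correspond to an edge $\{p,q\}$ of $\Gamma$, so they project to two incident vertices, not one element). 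Since $\Gamma$ is a geometry, $F_0$ extends by an element $w \in S_k$. Now I must equip $w$ with a partition class $P_w \in \pi(\Gamma_w[0,1])$ making $(w,P_w)$ incident to all of $F$. The candidate is forced: pick the class $P_w$ containing a vertex $p$ that is incident in $\Gamma$ to all of $F_0$ (such $p$ exists because $F_0 \cup \{w\}$ is a flag in the geometry $\Gamma$). Lemma~\ref{lem:tripleintersection}, suitably invoked for arbitrary flags rather than just triples (the same residual-connectedness-plus-bipartiteness argument iterates), guarantees that this single $p$ lies in $P_x$ for every $(x,P_x) \in F$ with $t(x)\neq 0,1$, hence $P_w \cap P_x \ni p$; and if $(p',0) \in F$ then $p'$ is incident to $w$ with $p' \in P_w$ because $p'$ and $p$ are connected by an even path in $\Gamma_w[0,1]$ — while if $(q,1) \in F$ then $q \in \overline{P_w}$ by Lemma~\ref{lem:adjacent} since $q$ is adjacent to $p$ through the edge structure. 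So $(w,P_w)$ extends $F$.

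Next the case $k \in \{0,1\}$; by symmetry say $k = 0$. Again project $F$ to a flag $F_0$ of $\Gamma$; it omits type $0$ (and possibly also type $1$). Since $\Gamma$ is a geometry, $F_0$ extends by a vertex $p \in S_0$. I claim $(p,0)$ is incident to every element of $F$: for $(x,P_x) \in F$ with $t(x)\neq 0,1$ we need $p \in P_x$, which again follows from the extended version of Lemma~\ref{lem:tripleintersection} once we know $p$ can be chosen in the common partition class — concretely, choose among the vertices extending $F_0$ one lying in the appropriate class, which the lemma shows is the same class for all such $x$ simultaneously; and if $(q,1)\in F$, incidence of $(p,0)$ with $(q,1)$ just means $p \sim q$ in $\Gamma[0,1]$, which holds since both are incident to a common higher element, or can be arranged directly. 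One should also note $\PP(\Gamma)$ is nonempty and that the flag $F$ together with the new element is still a flag, which is immediate from the incidence definition.

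The main obstacle I anticipate is the bookkeeping around the elements of type $0$ and $1$: in $\Gamma$ these are two distinct types of a single "vertex set–edge set" pair, but in $\PP(\Gamma)$ the type-$0$ and type-$1$ elements are two \emph{disjoint copies} $S_0 \times \{0\}$ and $S_0 \times \{1\}$ of the original vertex set, with their mutual incidence governed by adjacency in $\Gamma[0,1]$ rather than by equality. Getting the projection $\PP(\Gamma) \to \Gamma$ right on these elements — and correctly tracking, via Lemma~\ref{lem:adjacent}, which copy ($P_w$ or $\overline{P_w}$) a given vertex must land in when it is forced to be incident to a chosen higher element — is where care is needed; everything else reduces to the geometry axiom for $\Gamma$ combined with the (iterated) Lemma~\ref{lem:tripleintersection}. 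I would therefore state explicitly, perhaps as a preliminary remark, the generalization of Lemma~\ref{lem:tripleintersection} to arbitrary flags (not merely three elements), since it is used repeatedly, and then the proof of Theorem~\ref{PGammageom} becomes a clean case analysis on whether the missing type is in $\{0,1\}$.
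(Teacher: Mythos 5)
Your overall strategy is the same as the paper's: forget the partitions to land in $\Gamma$, use that $\Gamma$ is a geometry to extend, and lift back using Lemma~\ref{lem:adjacent} and a flag version of Lemma~\ref{lem:tripleintersection}. However, two steps --- precisely the ones where the edge elements of $\Gamma$ and condition $(B_2)$ must intervene --- fail as written. First, your projection sends both $(p,0)$ and $(q,1)$ to vertices of $\Gamma$; if a flag of $\PP(\Gamma)$ contains elements of both types $0$ and $1$, the projected set then contains two distinct elements of type $0$, and such elements are never incident in $\Gamma$ (adjacency in the graph $\Gamma[0,1]$ is not incidence in $\Gamma$), so the projected set is not a flag and the geometry axiom for $\Gamma$ cannot be applied to it. The paper avoids this by replacing the pair $\{(p,0),(q,1)\}$ with $\{p,e\}$, where $e=\{p,q\}$ is the unique edge given by $(B_1)$, and then using $(B_2)$ to check that $e$ is incident to the remaining $x_j$'s, so that one genuinely obtains a flag of $\Gamma$.

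Second, whenever the flag contains $(q,1)$ you need the auxiliary vertex $p$ (the one used to pin down $P_w$, or the new type-$0$ element) to be \emph{adjacent} to $q$ inside the relevant residues: that is what allows Lemma~\ref{lem:adjacent} to place $q$ in $\overline{P_w}$ and what makes $(p,0)$ incident to $(q,1)$. Your justification --- that $p$ and $q$ are ``incident to a common higher element, or can be arranged directly'' --- does not give adjacency (two opposite vertices of a square face are both incident to the face but are not adjacent), and your recipe ``choose $p$ incident to all of $F_0$'' is vacuous when $F_0$ already contains the vertex $q$, since no vertex is incident to a vertex. The correct arrangement is the paper's: extend the projected flag (which contains $q$, or $p$ and $e$) to a chamber of $\Gamma$; the chamber supplies an edge $e=\{p,q\}$ incident to all of its higher elements, and $(B_2)$ transfers these incidences to the endpoint $p$, after which Lemma~\ref{lem:adjacent} puts $p$ and $q$ in complementary classes of every relevant partition. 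With these two repairs (and with your sensible suggestion of stating the flag version of Lemma~\ref{lem:tripleintersection} explicitly, which the paper uses implicitly when it writes $\cap_{j\in J}P_{x_j}\neq\emptyset$), your case analysis reduces to the paper's proof.
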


\begin{proof}
    Let $\Phi$ be a flag of $\PP(\Gamma)$ and let $J = t(\Phi)$. We show that $\Phi$ can always be extended to a chamber.
    \begin{itemize}
        \item If $J \subset I \setminus \{0,1\}$, then $\Phi = \{(x_j,P_{x_j}) \mid j \in J\}$. We can consider the flag $F = \{ x_j \mid j \in J\}$ of $\Gamma$. Extend $F$ to a chamber $C = \{p,e,x_2,\cdots, x_{n-1}\}$ such that $p\in \cap_{j\in J}P_{x_j}$. Note that $\cap_{j\in J}P_{x_j}$ is non empty by Lemma~\ref{lem:tripleintersection}.  By $(B_1)$, $e = \{p,q\}$ for some $q \in \cap_{j\in J}\Bar{P}_{x_j}$. Then $\Theta = \{(p,0),(q,1),(x_2,P_{x_2}), \cdots, (x_{n-1},P_{x_{n-1}}) \}$ is a chamber of $H(\Gamma)$, where $p \in P_{x_i}$ for every $i\in I$. By Lemma~\ref{lem:adjacent}, we have that $q \in \Bar{P}_{x_i}$ for all $i$. Moreover $P_{x_i} \cap P_{x_j}$ always contains $p$, and is therefore not empty. Hence $\Theta$ is indeed a chamber.
        \item If $0 \in J$ but $1 \notin J$, then $\Phi = \{(p,0),(x_j,P_{x_j}) \mid j \in J \setminus \{0\} \}$. Consider the flag $F = \{p,x_j \mid j \in J \setminus \{0\} \}$ of $\Gamma$. It can be extended to a chamber $C = \{p,e,x_2,\cdots, x_{n-1}\}$, with $e=\{p,q\}$. As in the previous case, this yields a chamber $\Theta = \{(p,0),(q,1),(x_2,P_{x_2}), \cdots, (x_{n-1},P_{x_{n-1}}) \}$ of $\PP(\Gamma)$ containing $\Phi$. Note that $(B_2)$ guarantees that $x_i * q$, since $x_i * e$ for all $i \in J$.
        \item If $1 \in J$ but $0 \notin J$, then $\Phi = \{(q,1),(x_j,P_{x_j}) \mid j \in J \setminus \{1\} \}$. Consider the flag $F = \{q,x_j \mid j \in J \setminus \{1\} \}$ of $\Gamma$. It can be extended to a chamber $C = \{q,e,x_2,\cdots, x_{n-1}\}$, where $e = \{p,q\}$ is any edge containing $q$. Then, once again, $\Theta = \{(p,0),(q,1),(x_2,P_{x_2}), \cdots, (x_{n-1},P_{x_{n-1}}) \}$ is a chamber of $\PP(\Gamma)$ containing $\Phi$. Indeed, by $(B_2)$, since $x_i * e$ for all $i$, we also have that $x_i * p$ for all $i$.
        \item Finally, if $0,1 \in J$, we have that $\Phi = \{(p,0),(q,1),(x_j,P_{x_j}) \mid j \in J \setminus \{0,1\} \}$. Let $e$ be the unique edge of $\Gamma$ incident to both $p$ and $q$. Then, by $(B_2)$, $F = \{p,e,x_j \mid j \in J \setminus \{0,1\} \}$ is a flag of $\Gamma$. Extend it to a chamber $C = \{p,e,x_2,\cdots, x_{n-1}\}$. The chamber $\Theta = \{(p,0),(q,1),(x_2,P_{x_2}), \cdots, (x_{n-1},P_{x_{n-1}}) \}$, where $p \in P_{x_i}$ for every $i\in I$, is a chamber of $\PP(\Gamma)$ containing $\Phi$.
    \end{itemize}
\end{proof}

 In Sections~\ref{subsec:PP_residues}-\ref{subsec:PPb1b2}, we will prove that when $\Gamma[0,1]$ is not bipartite, $\PP(\Gamma)$ is residually connected. We also show that $\PP(\Gamma)$ is thin if and only if $\Gamma$ is thin and we investigate the automorphisms of $\PP(\Gamma)$.

\subsection{Residues of $\PP(\Gamma)$}\label{subsec:PP_residues}\hfill
\\

We now study the isomorphism classes of residues of $\PP(\Gamma)$ and see that they can all be deduced from the ones of $\Gamma$.

\begin{lemma}\label{lem:residue0}
    Let $\chi = (p,i) \in S_i$ where $i = 0,1$. Then, there is an isomorphism from $\PP(\Gamma)_\chi$ to $\Gamma_p$.
\end{lemma}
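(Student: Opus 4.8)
\textbf{Plan of proof for Lemma~\ref{lem:residue0}.}
The plan is to exhibit the isomorphism explicitly. Fix $\chi = (p,i)$ with $i \in \{0,1\}$. An element of $\PP(\Gamma)_\chi$ is, by Construction~\ref{constr:notBipartite}, either a pair $(y,P_y)$ with $y \in S_j$, $j \notin \{0,1\}$, and $p \in P_y$ (if $i=0$) or $p \in \Bar{P_y}$ (if $i=1$), or an element $(q,1-i) \in X_{1-i}$ with $p \sim q$ in $\Gamma[0,1]$. On the $\Gamma$-side, the residue $\Gamma_p$ consists of all elements of $\Gamma$ incident to $p$ and distinct from $p$: the edges through $p$, together with all elements of type $j \neq 0,1$ incident to $p$. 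I would define $\phi : \PP(\Gamma)_\chi \to \Gamma_p$ by $\phi(y,P_y) = y$ for $j \notin \{0,1\}$, and $\phi(q,1-i) = \{p,q\}$, the unique edge joining $p$ and $q$ (which exists and is unique by $(B_1)$, using that $p \sim q$). The candidate inverse sends an element $y$ of type $j \notin \{0,1\}$ incident to $p$ to $(y,P_y)$ where $P_y$ is the unique equivalence class of $\pi(\Gamma_y[0,1])$ containing $p$ (if $i=0$; the class with $p$ in its complement if $i=1$), and sends an edge $e$ through $p$ to $(q,1-i)$ where $q$ is the other endpoint of $e$.

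\textbf{Key steps.} First I would check $\phi$ is well-defined and bijective on elements: this amounts to observing that $p$ lies in (the appropriate side of) exactly one class of $\pi(\Gamma_y[0,1])$ for each $y$ incident to $p$, which is immediate since the classes partition the vertex set of $\Gamma_y[0,1]$ and $p$ is such a vertex; and that the assignment $e \mapsto q$ between edges through $p$ and vertices adjacent to $p$ is a bijection by $(B_1)$. Second, and this is the heart of the argument, I would verify that $\phi$ preserves and reflects incidence. For two elements $(y,P_y),(z,P_z)$ of types in $I \setminus \{0,1\}$ both incident to $\chi$: by definition they are incident in $\PP(\Gamma)$ iff $y*z$ and $P_y \cap P_z \neq \emptyset$; since both contain $p$ (by incidence with $\chi$, when $i=0$), the condition $P_y \cap P_z \neq \emptyset$ is automatic, so this reduces to $y * z$, i.e.\ incidence in $\Gamma_p$; when $i=1$ one uses that both $\Bar{P_y}$ and $\Bar{P_z}$ contain $p$ together with Lemma~\ref{lem:adjacent} to see the two classes still meet. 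For incidence between $(q,1-i)$ and $(y,P_y)$: in $\PP(\Gamma)$ this holds iff $q * y$ and $q$ lies on the correct side of $P_y$; on the $\Gamma_p$ side, $\phi(q,1-i) = \{p,q\}$ is incident to $y$ iff $\{p,q\} * y$, which by $(B_2)$ is equivalent to $\sigma_0(\{p,q\}) = \{p,q\} \subseteq \sigma_0(y)$, i.e.\ $p * y$ and $q * y$; since $p * y$ already holds (incidence with $\chi$) and $q$ being on the correct side of $P_y$ is, via Lemma~\ref{lem:adjacent} applied to the adjacent pair $p,q$ in $\Gamma_y[0,1]$, equivalent to $q * y$ together with $q$ being a vertex of $\Gamma_y[0,1]$, the two conditions match up. The type map is preserved by construction.

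\textbf{Main obstacle.} The routine part is the bookkeeping of which "side" ($P_y$ versus $\Bar{P_y}$) a vertex lies on as $i$ switches between $0$ and $1$; the genuinely delicate point is handling the case where some residue $\Gamma_y[0,1]$ is bipartite, so that $\pi(\Gamma_y[0,1])$ genuinely has two classes and one must confirm that the class attached to $y$ in $\PP(\Gamma)_\chi$ is forced by incidence with $\chi$ and is the same one picked by the inverse map. This is exactly where Lemma~\ref{lem:adjacent} is used, and where one must be careful that $p$ (respectively $q$) is actually a vertex of the relevant residual truncation — which follows because $\chi$ is incident to $y$ and $(B_2)$ / the definition of incidence in $\PP(\Gamma)$ place $p$ (or $q$) in $\sigma_0(y)$. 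Once this is pinned down, the verification that $\phi$ is a type-preserving incidence-preserving bijection with incidence-preserving inverse is straightforward, giving the claimed isomorphism $\PP(\Gamma)_\chi \cong \Gamma_p$.
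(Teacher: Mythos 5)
Your map is exactly the isomorphism the paper uses (send $(y,P_y)\mapsto y$, noting the class is forced by containing $p$ on the appropriate side, and $(q,1-i)\mapsto\{p,q\}$ via $(B_1)$), so the proposal is correct and takes essentially the same approach; you simply spell out the incidence-preservation checks via $(B_2)$ and Lemma~\ref{lem:adjacent} that the paper leaves as "easy to check."
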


\begin{proof}
    Suppose $i =0$. Let $\psi$ be an element of $\PP(\Gamma)$ incident to $\chi$. If $t(\psi) \neq 1$, we have that $\psi = (y,P_y)$ and we can define $\alpha(\psi) = y$. Note that $P_y$ is uniquely defined by the fact that $p \in P_y$. If $t(\psi) = 1$, we have that $\psi = (q,1)$ and we define $\alpha(\psi) = e$ where $e$ is the unique $1$-element incident to both $p$ and $q$. It is easy to check to $\alpha$ is both bijective and incidence preserving. The proof for $i = 1$ is identical, defining $P_y$ such that $p\in\Bar{P}_y$ when $t(\psi) \neq 0$, and $\psi=(q,0)$ when $t(\psi) = 0$.
\end{proof}

\begin{lemma}\label{lem:residue0J}
    Let $\Phi$ be a flag of type $J$ of $\PP(\Gamma)$ with $0 \in J$ or $1 \in J$. Then $\PP(\Gamma)_{\Phi}$ is isomorphic to $\Gamma_F$ for some flag $F$ of $\Gamma$.
\end{lemma}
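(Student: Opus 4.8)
The plan is to reduce this to Lemma~\ref{lem:residue0} by peeling off the elements of $\Phi$ of type $0$ and $1$ one at a time, and handling the remaining elements (all of type not in $\{0,1\}$) separately. Concretely, I would split into cases according to whether $J$ contains $0$, or $1$, or both.

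First I would treat the case where exactly one of $0,1$ lies in $J$, say $0 \in J$ and $1 \notin J$. Write $\Phi = \{(p,0)\} \cup \Phi'$ where $\Phi'$ consists of elements $(x_k, P_{x_k})$ with $t(x_k) = k \notin \{0,1\}$. The residue $\PP(\Gamma)_\Phi$ is the set of elements incident to $(p,0)$ and to every element of $\Phi'$, i.e. it is the residue of $\Phi'$ inside the geometry $\PP(\Gamma)_{(p,0)}$. By Lemma~\ref{lem:residue0}, there is an isomorphism $\alpha \colon \PP(\Gamma)_{(p,0)} \to \Gamma_p$. Under $\alpha$, each $(x_k, P_{x_k})$ with $p \in P_{x_k}$ maps to $x_k$ (and, by the discussion in the proof of Lemma~\ref{lem:residue0}, the partition $P_{x_k}$ containing $p$ is forced, so the only elements of $\PP(\Gamma)_{(p,0)}$ are exactly those whose partition contains $p$). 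Hence $\alpha$ carries $\Phi'$ to a flag $F' = \{x_k \mid k \in J \setminus \{0\}\}$ of $\Gamma_p$, and therefore restricts to an isomorphism $\PP(\Gamma)_\Phi = (\PP(\Gamma)_{(p,0)})_{\Phi'} \to (\Gamma_p)_{F'} = \Gamma_F$, where $F = \{p\} \cup F'$. The case $1 \in J$, $0 \notin J$ is symmetric, using the ``$i = 1$'' half of Lemma~\ref{lem:residue0} (partitions chosen so that $p \in \Bar{P}_{x_k}$).

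For the case $0, 1 \in J$, write $\Phi = \{(p,0),(q,1)\} \cup \Phi'$. Here I would first pass to $\PP(\Gamma)_{(p,0)} \cong \Gamma_p$ via Lemma~\ref{lem:residue0}; under that isomorphism the element $(q,1)$ maps to the unique edge $e = \{p,q\}$ of $\Gamma$, and $\Phi'$ maps as above to $\{x_k \mid k \in J \setminus \{0,1\}\}$. Thus $\PP(\Gamma)_\Phi$ is isomorphic to the residue in $\Gamma_p$ of the flag $\{e\} \cup \{x_k\}$, which is $\Gamma_F$ for $F = \{p, e\} \cup \{x_k \mid k \in J \setminus \{0,1\}\}$; that $F$ is genuinely a flag of $\Gamma$ follows from $(B_2)$ exactly as in the proof of Theorem~\ref{PGammageom}. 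One should check that the composite of restrictions of isomorphisms is still incidence-preserving and bijective, but this is immediate since each factor is.

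The only subtle point — and the one I would state carefully rather than wave at — is the well-definedness of the partitions: when we restrict to the residue of a type-$0$ element $(p,0)$, every incident element of type $k \notin \{0,1\}$ appears with the partition containing $p$, so there is exactly one lift of each such $x_k$, and similarly there is exactly one edge $e$ of $\Gamma$ through $p$ and $q$ by $(B_1)$. This is what makes the map $\alpha$ of Lemma~\ref{lem:residue0} a genuine isomorphism and not merely a surjection, and it is the crux that lets the argument go through verbatim on residues. Everything else is bookkeeping: iterating Lemma~\ref{lem:residue0} and composing with the obvious identification of residues-of-residues with residues of the enlarged flag.
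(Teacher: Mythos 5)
Your proof is correct and follows essentially the same route as the paper: case-split on whether $J$ contains $0$, $1$, or both, restrict the isomorphism of Lemma~\ref{lem:residue0}, use $(B_1)$ for the unique edge $e=\{p,q\}$, and invoke $(B_2)$ to see that $F=\{p,e,x_j \mid j\in J\setminus\{0,1\}\}$ is a flag of $\Gamma$ when $\{0,1\}\subseteq J$. Your extra care about the forced choice of partitions and the residue-of-residue identification just spells out what the paper leaves as ``straightforward to check.''
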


\begin{proof}
    Suppose first that $0 \in J$ but $1 \notin J$. Then, $\Phi = \{(p,0),(x_j,P_{x_j}) \mid j \in J \setminus \{0\}\}$ and $F = \{p,x_j \mid j \in J \setminus \{0\}\}$ is a flag of $\Gamma$. The isomorphism from Lemma~\ref{lem:residue0} restricts to an isomorphism from $\PP(\Gamma)_{\Phi}$ to $\Gamma_F$. The same holds for $1 \in J$ but $0 \notin J$. Finally, suppose that $\{0,1\} \subseteq J$. Then, $\Phi = \{(p,0),(q,1),(x_j,P_{x_j}) \mid j \in J \setminus \{0,1\}\}$. Let $e = \{p,q\}$ be the only edge between $p$ and $q$ in $\Gamma$. Then, by $(B_2)$, $F = \{p,e,x_j \mid j \in J \setminus \{0,1\}\}$ is a flag of $\Gamma$. It is straightforward to check that $\PP(\Gamma)_{\Phi}$ is isomorphic to $\Gamma_F$.
\end{proof}

Let $\Phi$ be any flag of $\PP(\Gamma)$ of type $J \subseteq (I \setminus \{0,1\})$. Then $\Phi = \{(x_j,P_{x_j}) \mid j \in J$\}. We say that the flag $F = \{x_j \mid j \in J\}$ is the flag obtained from $\Phi$ by forgetting the partitions.

\begin{lemma}\label{lem:residue01}
    Let $\Phi = \{(x_2,P_{x_2}), \cdots, (x_{n-1},P_{x_{n-1}})\}$ be a flag of $\PP(\Gamma)$ of cotype $\{0,1\}$ and let $F = \{x_2,\cdots,x_{n-1}\}$ be the flag of $\Gamma$ obtained by forgetting the partitions. Then, $\PP(\Gamma)_{\Phi}$ is isomorphic to $\Tilde{\GG}(\cap_{i=2}^{n-1} P_{x_i})$, where $\GG$ is $\Gamma_F$, considered as a graph. 
\end{lemma}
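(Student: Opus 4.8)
The plan is to construct an explicit bijection between the elements of $\PP(\Gamma)_{\Phi}$ and the elements of $\Tilde{\GG}(Q)$, where $Q = \cap_{i=2}^{n-1} P_{x_i}$, and then check it preserves incidence. First I would observe that, by condition $(B_2)$ and the definition of the residue of a flag, an element of $\PP(\Gamma)$ incident to all of $(x_2,P_{x_2}),\ldots,(x_{n-1},P_{x_{n-1}})$ must have type $0$ or $1$: any element of type $j \in I\setminus\{0,1\}$ distinct from the $x_i$'s cannot be incident to a flag of corank $2$ in $\Gamma$ whose cotype is $\{0,1\}$, since $F$ is already a flag of cotype $\{0,1\}$ in $\Gamma$ and $\Gamma$ is a geometry. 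So $\PP(\Gamma)_{\Phi}$ has only $0$- and $1$-elements, matching the rank two geometry $\Tilde{\GG}(Q)$.

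Next I would identify which $0$-elements $(p,0)$ of $\PP(\Gamma)$ lie in $\PP(\Gamma)_{\Phi}$. By the incidence rule of Construction~\ref{constr:notBipartite} (case $i=0$, $j\neq 1$), we need $p * x_i$ and $p \in P_{x_i}$ for every $i=2,\ldots,n-1$; equivalently $p$ is a vertex of $\GG = \Gamma_F$ lying in $Q = \cap_i P_{x_i}$. So the $0$-elements of $\PP(\Gamma)_{\Phi}$ are exactly $Q \times \{0\}$, which is precisely the set of $0$-elements of $\Tilde{\GG}(Q)$. Symmetrically, using the case $i=1$, $j\neq 0$, the $1$-elements $(q,1)$ of $\PP(\Gamma)_{\Phi}$ are those with $q$ a vertex of $\GG$ lying in $\cap_i \Bar{P}_{x_i}$. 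Here I need the fact that $\cap_i \Bar{P}_{x_i} = \Bar{Q}$ inside $\GG$: this requires checking that the bipartition classes of $\Gamma_F[0,1]$ restrict compatibly, which follows from the residual connectedness argument already used in Lemma~\ref{lem:tripleintersection} (the graph $\Gamma_F[0,1]$ is connected, and if it is bipartite its two colour classes are exactly the intersections of the colour classes of the larger residues). Thus the $1$-elements of $\PP(\Gamma)_{\Phi}$ biject with $\Bar{Q} \times \{1\}$, the $1$-elements of $\Tilde{\GG}(Q)$.

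Finally I would check incidence: two elements $(p,0)$ and $(q,1)$ of $\PP(\Gamma)_{\Phi}$ are incident in $\PP(\Gamma)$ (hence in the residue) if and only if $p \sim q$ in $\Gamma[0,1]$; since both $p,q$ are vertices of $\GG = \Gamma_F$ and, by $(B_1)$, an edge of $\Gamma$ between them is an edge of $\GG$, this is exactly $\{p,q\} \in E(\GG)$, which is the defining incidence of $\Tilde{\GG}(Q)$. So the obvious map $(p,0)\mapsto(p,0)$, $(q,1)\mapsto(q,1)$ is the desired isomorphism. I expect the only genuinely non-trivial point — the main obstacle — to be the verification that $\cap_{i=2}^{n-1}\Bar{P}_{x_i} = \Bar{Q}$ (and the analogous non-emptiness/consistency statements) when some of the residues $\Gamma_{x_i}[0,1]$ are bipartite and others are not; this is handled by invoking residual connectedness of $\Gamma$ together with Lemma~\ref{lem:adjacent} and the triple-intersection argument of Lemma~\ref{lem:tripleintersection}, so it amounts to a careful bookkeeping of bipartition classes rather than a new idea.
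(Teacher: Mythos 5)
Your proposal is correct and follows essentially the same route as the paper's own proof: identify the residue's elements as the pairs $(p,0)$ with $p\in\cap_{i}P_{x_i}$ and $(q,1)$ with $q\in\cap_{i}\Bar{P}_{x_i}$, and observe that the obvious type-preserving map to $\Tilde{\GG}(\cap_i P_{x_i})$ is an incidence-preserving bijection. The paper states this more tersely; your extra bookkeeping (that the classes $P_{x_i}$ restrict compatibly to $\Gamma_F[0,1]$ via connectedness, Lemma~\ref{lem:adjacent} and the argument of Lemma~\ref{lem:tripleintersection}) is exactly the detail the paper leaves implicit, so no discrepancy.
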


\begin{proof}
    Let $\chi$ be an element of $\PP(\Gamma)_{\Phi}$. Then $\chi = (p,i)$ for $i = 0,1$ with $p \in \Gamma_F$. Moreover, if $i =0$, we have that $p \in P_{x_j}$ for all $j\in I \setminus \{0,1\}$, and if $i = 1$, we have that $p \in \Bar{P}_{x_j}$ for all $j\in I \setminus \{0,1\}$. Hence, there is a bijection between the elements of the two geometries, and that bijection is clearly incidence preserving.
\end{proof}

\begin{lemma}\label{lem:residueNot01}
    Let $\Phi$ be a flag of $\PP(\Gamma)$ of cotype strictly containing $\{0,1\}$ and let $F$ be the flag of $\Gamma$ obtained by forgetting the partitions of $\Phi$. Then, $\PP(\Gamma)_{\Phi}$ is isomorphic to $\PP(\Gamma_F)(0,1)$ if $\Gamma_F[0,1]$ is not bipartite and $\PP(\Gamma)_{\Phi}$ is isomorphic to $\BP(\Gamma_F)(0,1)$ if $\Gamma_F[0,1]$ is bipartite.
\end{lemma}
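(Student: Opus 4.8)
The plan is to reduce the statement about residues of $\PP(\Gamma)$ to the already-established structure of $\PP(\Gamma_F)(0,1)$ and $\BP(\Gamma_F)(0,1)$ by exhibiting an explicit type-preserving bijection and checking it respects incidence. First I would fix a flag $\Phi$ of cotype $K \supsetneq \{0,1\}$, so that the flag $F$ of $\Gamma$ obtained by forgetting the partitions has type $I \setminus K \subseteq I \setminus \{0,1\}$. The key observation is that, by residual connectivity of $\Gamma$, the truncation $\Gamma_F[0,1]$ is a connected simple graph (this uses $(B_1)$ applied inside the residue, exactly as noted just before the Partitioned Geometry subsection), so both Construction~\ref{constr:notBipartite} and Construction~\ref{BPRH} make sense when applied to $\Gamma_F$, depending on whether $\Gamma_F[0,1]$ is bipartite.

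The core of the argument is then a careful bookkeeping of how the partitions behave under passing to a residue. An element $\psi$ of $\PP(\Gamma)$ incident to all of $\Phi$ is either $(p,i)$ with $i \in \{0,1\}$, or $(y,P_y)$ with $t(y) \notin \{0,1\}$; in the latter case the incidence condition with each $(x_j,P_{x_j}) \in \Phi$ forces $P_y \cap P_{x_j} \neq \emptyset$ for all $j$, and by Lemma~\ref{lem:tripleintersection} together with $\Phi \cup \{\psi\}$ being a flag, all these partitions share a common vertex. I would use this to define the candidate isomorphism: send $(p,i) \mapsto (p,i)$ and $(y,P_y) \mapsto (y, P_y \cap \bigcap_{j} \Gamma_{x_j}[0,1])$, i.e. restrict the partition class of $y$ in $\Gamma_y[0,1]$ to the vertex set of the residue $\Gamma_F$. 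One must check this restricted set is genuinely an element of $\pi((\Gamma_F)_y[0,1])$ — i.e. that $P_y$ restricted to the smaller connected graph is still a single equivalence class — and that the map is a bijection onto the element set of $\PP(\Gamma_F)(0,1)$ (resp. $\BP(\Gamma_F)(0,1)$ when $\Gamma_F[0,1]$ is bipartite, where the two partition classes are no longer distinguished by a decoration but by which side $X_0$ or $X_1$ a vertex lies on). Incidence preservation is then a direct unwinding of the four cases in Construction~\ref{constr:notBipartite}: incidence in $\PP(\Gamma)_\Phi$ between two elements is, by definition, incidence in $\Gamma$ plus a non-empty-intersection (or membership) condition on partitions, and these restrict verbatim to the corresponding conditions inside $\Gamma_F$.

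The main obstacle I anticipate is the bipartite/non-bipartite dichotomy: the graph $\Gamma_y[0,1]$ may be non-bipartite while its intersection with the residue, $(\Gamma_F)_y[0,1]$, becomes bipartite (or vice versa), and more delicately $\Gamma_F[0,1]$ itself may switch parity relative to $\Gamma[0,1]$. So the case split in the statement is essential, and one has to verify that when $\Gamma_F[0,1]$ is bipartite the construction that correctly describes $\PP(\Gamma)_\Phi$ is indeed $\BP(\Gamma_F)(0,1)$ and not $\PP(\Gamma_F)(0,1)$ — concretely, that the two "copies" of each $0$-element in $\PP(\Gamma)$ collapse, on the residue, onto the two sides of the bipartition in exactly the way Construction~\ref{BPRH} prescribes. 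I would handle this by invoking Lemma~\ref{lem:adjacent} and the analysis of $\pi(\GG)$: when $\Gamma_F[0,1]$ is bipartite, for every $i$-element $y$ in the residue the restricted class $P_y \cap V(\Gamma_F)$ is forced to coincide with one fixed side of the bipartition of $\Gamma_F[0,1]$ (because all the $P_{x_j}$ already agree on a common vertex and connectedness propagates the coloring), so the partition decoration carries no extra information and can be dropped, recovering $\BP$. The remaining verifications — that the map is well-defined, bijective, and incidence-preserving in all cotype cases — are routine once this parity point is settled, so I would state them briefly rather than belabor them.
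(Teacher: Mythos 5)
Your proposal is correct and follows essentially the same route as the paper's proof: both identify the residue $\PP(\Gamma)_\Phi$ elementwise with $\PP(\Gamma_F)(0,1)$ (resp.\ $\BP(\Gamma_F)(0,1)$) by fixing the elements $(p,i)$ and restricting each partition class $P_y$ to the vertex set of the residue, with the bipartite case handled by observing that the decorations collapse onto the bipartition of $\Gamma_F[0,1]$. The one verification you flag but defer---that the restricted set is a genuine class of $\pi((\Gamma_F)_y[0,1])$, i.e.\ that the bipartiteness of $\Gamma_y[0,1]$ and of $(\Gamma_F)_y[0,1]$ cannot disagree for $y$ in the residue---is exactly the point the paper's own proof also passes over without comment, so your sketch matches the published argument in both structure and level of detail.
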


\begin{proof}
    First of all, $\Gamma_F$ is a geometry that satisfies the conditions $(B_1)$ and $(B_2)$.
    Let $K = I \setminus (\{0,1\} \cup J)$ be the set of types not in $J$ and not in $\{0,1\}$. Our assumption implies that $K$ is not empty.

    If $\cap_{j\in J} P_{x_j}$ is equal to the vertex set of $\GG_F$, the graph of the $\{0,1\}$-truncation of $\Gamma_F$, then $\GG_F$ is not bipartite. In that case, we use Construction~\ref{constr:notBipartite} for building $\PP(\Gamma_F)(0,1)$. Else, $\GG_J$ is bipartite and we use Construction~\ref{BPRH} for building $\BP(\Gamma_F)(0,1)$.
    
    In both cases, the proof is similar to the proof of Lemma~\ref{lem:residue01}. For example, in the non bipartite case, both $\PP(\Gamma)_{\Phi}$ and $\PP(\Gamma_F)(0,1)$ have $\{(p,0) \mid p \in \cap_{j \in J} P_{x_j}\}$ as $0$-elements, $\{(q,1) \mid q \in \cap_{j \in J} \Bar{P}_{x_j}\}$ as $1$-elements and $\{(x_k,P_{x_k}) \mid x_k \in \Gamma_F, P_{x_k} \cap P_{x_j} \neq \emptyset, j\in J\}$ as $k$-elements for $k \in K$.

    It is easy to show that $\PP(\Gamma)_{\Phi}$ is isomorphic to the subgeometry of $\Gamma_F$ whose $0$-elements are in $\cap_{j \in J} P_{x_j}$ and whose $1$-elements are in $\cap_{j \in J} \Bar{P}_{x_j}$. The incidences are also the same in both geometries.
\end{proof}

\subsection{Residual connectedness}\hfill
\begin{prop}\label{prop:RC}
     Let $\Gamma$ be a residually connected geometry such that $\Gamma[0,1]$ is not bipartite. Then, $\PP(\Gamma)$ is residually connected.
\end{prop}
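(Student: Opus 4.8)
The plan is to verify residual connectedness by checking that every residue of $\PP(\Gamma)$ of rank at least $2$ is connected, and to do this by exploiting the classification of residues established in Section~\ref{subsec:PP_residues}. The point is that Lemmas~\ref{lem:residue0}, \ref{lem:residue0J}, \ref{lem:residue01} and \ref{lem:residueNot01} already identify each residue of $\PP(\Gamma)$ (up to isomorphism) with a geometry built directly from a residue $\Gamma_F$ of $\Gamma$, so the work reduces to a finite case analysis on the type $J$ of the flag $\Phi$ being taken residue of.

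First I would handle the residues of flags $\Phi$ whose type $J$ meets $\{0,1\}$. By Lemma~\ref{lem:residue0J}, $\PP(\Gamma)_\Phi \cong \Gamma_F$ for a flag $F$ of $\Gamma$, and since $\Gamma$ is residually connected and $\mathrm{rank}(\Gamma_F) = \mathrm{rank}(\PP(\Gamma)_\Phi) \geq 2$, this residue is connected. Next, for flags $\Phi$ of cotype exactly $\{0,1\}$, Lemma~\ref{lem:residue01} identifies $\PP(\Gamma)_\Phi$ with the partitioned neighbourhood geometry $\tilde{\GG}(\bigcap_i P_{x_i})$ of the graph $\GG = \Gamma_F$; this residue has rank $2$, and its connectedness follows from Lemma~\ref{lem:PnbghGeom} together with the fact that $\Gamma_F[0,1] = \GG$ is connected, which holds because $\Gamma$ is residually connected (note $\mathrm{rank}(\Gamma_F) \geq 3$ here since $n \geq 3$ and $F$ has cotype $\{0,1\}$, unless $F$ is empty, in which case $\GG = \Gamma[0,1]$ is connected by hypothesis).

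The remaining and genuinely recursive case is when $\Phi$ has cotype strictly containing $\{0,1\}$. By Lemma~\ref{lem:residueNot01}, $\PP(\Gamma)_\Phi$ is isomorphic either to $\PP(\Gamma_F)(0,1)$ (when $\Gamma_F[0,1]$ is not bipartite) or to $\BP(\Gamma_F)(0,1)$ (when it is bipartite), where $\Gamma_F$ is a residue of $\Gamma$ of strictly smaller rank, which is itself residually connected and still satisfies $(B_1)$ and $(B_2)$. In the non-bipartite subcase one argues by induction on the rank (the base case being rank $3$, where the only such residue is the whole geometry of a flag of type a single element $j \notin \{0,1\}$, which reduces to the cotype-$\{0,1\}$ situation already handled). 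In the bipartite subcase one invokes the residual connectedness of $\BP(\Gamma_F)$, which is exactly the content of the construction from \cite{Leemans2000} (cf. Theorem~\ref{thm:BP_fromDimi}); this is precisely the place where switching from Construction~\ref{NBPRH} to the mixed Construction~\ref{constr:notBipartite}/\ref{BPRH} is needed, since Construction~\ref{NBPRH} would have failed residual connectedness here.

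I expect the main obstacle to be organizing the induction cleanly: one must set it up so that "residually connected and satisfying $(B_1)$, $(B_2)$" is the hypothesis propagated down to residues, and one must be careful that the two possible outputs $\PP(\Gamma_F)$ and $\BP(\Gamma_F)$ are covered uniformly — in particular checking that $\BP$ of a residually connected geometry is connected, which may require either citing the relevant theorem from \cite{Leemans2000} verbatim or reproving a short connectedness statement for $\BP$ along the same lines as the (commented-out) connectedness argument for $\PP(\Gamma)$. A secondary subtlety is making sure the rank bookkeeping is correct: a residue $\PP(\Gamma)_\Phi$ of rank $\geq 2$ corresponds to $\Gamma_F$ of rank $\geq 2$ (indeed $\geq 3$ whenever $\{0,1\}$ is in the cotype but $\Phi \neq \emptyset$), so no degenerate low-rank residue escapes the argument.
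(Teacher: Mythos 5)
Your handling of the proper residues is sound and is essentially the paper's: residues of flags whose type meets $\{0,1\}$ are dealt with by Lemma~\ref{lem:residue0J}, residues of cotype exactly $\{0,1\}$ by Lemma~\ref{lem:residue01} together with Lemma~\ref{lem:PnbghGeom}, and residues of cotype strictly containing $\{0,1\}$ by Lemma~\ref{lem:residueNot01}, with induction on the rank in the non-bipartite subcase and Theorem~\ref{thm:BP_fromDimi} in the bipartite subcase; making this recursion explicit is a reasonable sharpening of the paper's terse summary. (One small slip: a flag of cotype exactly $\{0,1\}$ gives $\Gamma_F$ of rank $2$, not rank $\geq 3$; its connectedness still follows from residual connectedness of $\Gamma$, so nothing breaks.)

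The genuine gap is the connectedness of $\PP(\Gamma)$ itself, i.e.\ the residue of the empty flag, which is part of residual connectedness and is precisely the case your scheme never reaches: Lemma~\ref{lem:residueNot01} is vacuous for $\Phi=\emptyset$ (there $\Gamma_F=\Gamma$, not of strictly smaller rank, so the induction does not apply), and your proposed reduction of this case ``to the cotype-$\{0,1\}$ situation'' does not work --- connectedness of the rank-two residues attached to the individual elements of type $\geq 2$ does not imply connectedness of the whole geometry (a disjoint union of two copies of $\PP(\Gamma)$ would have the same proper residues). What is needed, at every rank and in particular in your rank-$3$ base case, is the direct argument the paper gives in the first paragraph of its proof: since $\PP(\Gamma)$ is a geometry (Theorem~\ref{PGammageom}), every element is incident to some element $(p,0)$; and any two elements $(p,0)$, $(p',0)$ are joined by a path inside the $\{0,1\}$-truncation of $\PP(\Gamma)$, which is the partitioned neighbourhood geometry of the connected, non-bipartite graph $\Gamma[0,1]$ and is therefore connected by Lemma~\ref{lem:PnbghGeom}; concatenating these paths shows $\PP(\Gamma)$ is connected. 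Once this paragraph is added, your proof closes and coincides with the paper's.
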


\begin{proof}
    Suppose first that $\Gamma$ is residually connected. Then, $\Gamma[0,1]$ must be connected, and thus, by Lemma~\ref{lem:PnbghGeom}, $\PP(\Gamma)[0,1]$ is also connected. Suppose $(x_j,P_{x_j})$ and $(x_k,P_{x_k})$ are two elements of $\PP(\Gamma)$ of type $j$ and $k$, respectively. Since $\PP(\Gamma)$ is a geometry, we can always find some $(p,0)$ incident to $(x_j,P_{x_j})$ and some $(p',0)$ incident to $(x_k,P_{x_k})$. We can then find a path from $(p,0)$ to $(p',0)$ in $\PP(\Gamma)[0,1]$. Concatenating these paths, we obtain a path from $(x_j,P_{x_j})$ to $(x_k,P_{x_k})$. Hence, $\PP(\Gamma)$ is connected. 
    
    Putting together, Lemma~\ref{lem:residue0}, Lemma~\ref{lem:residue01}, Lemma~\ref{lem:residue0J} and Lemma~\ref{lem:residueNot01}, we see that all the residues of corank at least two of $\PP(\Gamma)$ are connected, as they are either isomorphic to residues of residually connected geometries or isomorphic to a partitioned neighborhood geometry of a connected graph, which is connected by Lemma~\ref{lem:PnbghGeom}.

\end{proof}

\subsection{Automorphisms of $\PP(\Gamma)$}\hfill
\\

We know investigate the automorphism group of the partitioned geometry $\PP(\Gamma)$.
Let $\sigma$ be an automorphism of $\Gamma$. Then, we define $\PP(\sigma)$ to be the map sending $(p,i)$ to $(\sigma(p),i)$, $i =0,1$ and sending $(x,P)$ to $(\sigma(x),\sigma(P))$. Note that it makes sense to talk about $\sigma(P)$ as $P$ is a collection of $0$-elements of $\Gamma$. It is straightforward to check that $\PP(\sigma)$ is incidence preserving and bijective on elements of $\PP(\Gamma)$.

\begin{prop}\label{prop:autom}
    Let $G$ be a group of automorphisms of $\Gamma$ acting transitively on the chambers of $\Gamma$. Then $G$ also acts transitively by automorphisms on the chambers of $\PP(\Gamma)$.
\end{prop}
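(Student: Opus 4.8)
The plan is to reduce the statement to two facts: (1) the map $\sigma \mapsto \PP(\sigma)$ is a group homomorphism from $G$ into $\Aut(\PP(\Gamma))$, and (2) this action is transitive on chambers. Fact (1) is essentially routine: the preceding paragraph already observes that each $\PP(\sigma)$ is an incidence-preserving bijection that fixes types, hence an automorphism; that $\PP(\sigma \tau) = \PP(\sigma)\PP(\tau)$ and $\PP(\mathrm{id}) = \mathrm{id}$ follows by checking the formula on the two kinds of elements $(p,i)$ and $(x,P)$, using that $\sigma$ acts on the $0$-shadows compatibly with the equivalence relation defining $\pi(\Gamma_x[0,1])$ (an automorphism of $\Gamma$ restricts to a graph isomorphism $\Gamma_x[0,1] \to \Gamma_{\sigma(x)}[0,1]$, hence sends equivalence classes to equivalence classes, so $\sigma(P) \in \pi(\Gamma_{\sigma(x)}[0,1])$ is well-defined). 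So the real content is transitivity.

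For transitivity, I would fix a reference chamber of $\PP(\Gamma)$. By the proof of Theorem~\ref{PGammageom}, every chamber of $\PP(\Gamma)$ has the shape $\Theta = \{(p,0),(q,1),(x_2,P_{x_2}),\dots,(x_{n-1},P_{x_{n-1}})\}$ where $e=\{p,q\}$ is an edge of $\Gamma$, $C = \{p,e,x_2,\dots,x_{n-1}\}$ is a chamber of $\Gamma$, and — crucially — $p \in P_{x_i}$ for every $i$ (equivalently $q \in \Bar{P}_{x_i}$ for every $i$, by Lemma~\ref{lem:adjacent}). The key structural observation is that a chamber $\Theta$ of $\PP(\Gamma)$ is completely determined by the underlying chamber $C$ of $\Gamma$ \emph{together with} the choice of which of the two vertices of $e$ plays the role of the $0$-element: once we declare $p$ to be the $0$-element, each $P_{x_i}$ is forced to be the equivalence class of $\Gamma_{x_i}[0,1]$ containing $p$, and $q$ is the other endpoint of $e$. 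Conversely every such pair $(C, \text{ordered endpoint})$ gives a chamber. So I would argue: given two chambers $\Theta, \Theta'$ of $\PP(\Gamma)$ with underlying $\Gamma$-chambers $C, C'$, pick $g \in G$ with $g(C) = C'$ (exists since $G$ is chamber-transitive on $\Gamma$); then $\PP(g)(\Theta)$ and $\Theta'$ are both chambers of $\PP(\Gamma)$ lying over $C'$, so they differ only possibly in the $0/1$ labelling of the two vertices of the common edge $e'$ of $C'$. If they agree, we are done; if not, I need an automorphism fixing $C'$ but swapping the two roles.

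The main obstacle is precisely this last point: producing an automorphism of $\PP(\Gamma)$ that fixes the underlying $\Gamma$-chamber $C'$ but exchanges the $0$- and $1$-elements over the edge $e'$. This is where non-bipartiteness of $\Gamma[0,1]$ enters. Because $\Gamma[0,1]$ is not bipartite, each $\Gamma_{x_i}[0,1]$ may or may not be bipartite, but the whole $\{0,1\}$-truncation has an odd cycle; the point is that the element $(p,0)$ and $(q,1)$ are not "globally distinguishable" — concretely, I would show there is a path in $\PP(\Gamma)$ realizing the required swap is \emph{not} what's needed (that only gives connectivity), but rather I should exhibit the swap as an honest automorphism. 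Here is the cleaner route I would actually take: define directly a candidate automorphism $\tau$ of $\PP(\Gamma)$ by $\tau(p,0) = (p,1)$, $\tau(p,1)=(p,0)$ for $0$-elements $p$ of $\Gamma$, and $\tau(x,P) = (x,\Bar{P})$ for $x$ of type $\notin\{0,1\}$. One checks $\tau$ is a type-preserving bijection, and it is incidence preserving because the incidence conditions in Construction~\ref{constr:notBipartite} are symmetric under $(p,0)\leftrightarrow(p,1)$ combined with $P_y \leftrightarrow \Bar{P}_y$: the clause "$p \in P_y$" becomes "$p \in \Bar{\Bar{P_y}}$", the clause "$q\in\Bar{P_y}$" becomes "$q \in \Bar P_y$", and the clause $P_x \cap P_y \neq \emptyset$ is where care is needed — but when $\Gamma[0,1]$ is non-bipartite at the relevant residues one has $\Bar P = P$, and in the bipartite residues $\tau$ swaps the two classes consistently, so $\Bar P_x \cap \Bar P_y \ne \emptyset$ iff $P_x \cap P_y \ne \emptyset$ by Lemma~\ref{lem:tripleintersection}-type reasoning. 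Then $\tau \in \Aut(\PP(\Gamma))$ fixes every $\Gamma$-underlying chamber while swapping the two vertex-roles, and $\tau$ together with $\{\PP(g) : g \in G\}$ acts transitively on chambers. (Note $\tau$ is generally not in the image of $\PP$, so strictly the proposition as stated, "$G$ acts transitively via $\PP$", needs either this extra $\tau$ or the observation that chamber-stabilizer arguments let $G$ alone suffice when $\Gamma$ is flag-transitive — I would check whether the intended claim is that $G$ acts transitively through $\PP$, in which case one must verify that the $0/1$-swap over $e'$ is already achieved by some $g \in G$ stabilizing $C'$ setwise but exchanging $p' \leftrightarrow q'$, which holds precisely because such a $g$ exists in $\Aut(\Gamma)$ by flag-transitivity applied to the $1$-adjacent chamber $C'^{\,0}$... this case analysis is the genuine work.)
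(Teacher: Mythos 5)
There is a genuine gap, and it sits exactly where you located ``the genuine work.'' The obstacle you set up does not exist. With your own normalization, the underlying $\Gamma$-chamber of $\Theta=\{(p,0),(q,1),(x_2,P_{x_2}),\dots,(x_{n-1},P_{x_{n-1}})\}$ is $C=\{p,e,x_2,\dots,x_{n-1}\}$ with $e=\{p,q\}$, and this chamber already records which endpoint of $e$ is the $0$-element, namely $p$. If $g\in G$ satisfies $g(C)=C'$, then, since $g$ is type-preserving and a chamber has exactly one element of each type, $g(p)=p'$, $g(e)=e'$ and $g(x_i)=x_i'$; hence $g(q)=q'$ (it is the other endpoint of $e'$), and $g(P_{x_i})=P_{x_i'}$ because both are classes of $\pi(\Gamma_{x_i'}[0,1])$ containing $p'$. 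So $\PP(g)(\Theta)=\Theta'$ outright: the two chambers ``lying over $C'$'' can never disagree on the $0/1$ labelling, and no swap is needed. This short argument is precisely the paper's proof.

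Your proposed repair makes the situation worse rather than better: the map $\tau(p,i)=(p,i+1\bmod 2)$, $\tau(x,P)=(x,\Bar{P})$ is \emph{not} type-preserving (it sends type-$0$ elements to type-$1$ elements), so it is a correlation, not an automorphism --- indeed it is exactly the duality $\alpha$ the paper records in the proposition immediately following this one. Adjoining it would at best show that a correlation group acts chamber-transitively, not that $G$ acts transitively by automorphisms, and in any case $\tau$ is not in the image of $\PP$. Your final fallback is also not coherent as stated: a type-preserving automorphism that stabilizes $C'$ setwise must fix each of its elements, so it cannot exchange $p'$ and $q'$; what you would need is some $g$ with $g(C')=(C')^{0}$, but, as explained above, nothing of the sort is required once one notices that type-preservation pins down the $0$-element. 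The homomorphism/well-definedness remarks in your first paragraph are fine; the error is the phantom case analysis and the non-automorphism used to resolve it.
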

\begin{proof}
    Let $\Theta = \{(p,0),(q,1),(x_2,P_{x_2}),\cdots,(x_{n-1},P_{x_{n-1}}) \}$ be a chamber of $\PP(\Gamma)$. Then $C = \{p,e,x_2,\cdots,x_{n-1}\}$, where $e = \{p,q\}$, is a chamber of $C$.
    Let $\Theta' = \{(p',0),(q',1),(x_2',P_{x_2'}),\cdots,(x_{n-1}',P_{x_{n-1}'}) \}$ be any other chamber of $\PP(\Gamma)$ and let $C' = \{p',e',x_2',\cdots,x_{n-1}'\}$, wherer $e'= \{p',q'\}$. There is $\sigma \in G$ such that $\sigma(C) = C'$. Hence, we get that $\sigma(p) = p', \sigma(e) = e'$ and $\sigma(x_i) = x_i'$ for $i = 2,\cdots,n-1$. Therefore, $\sigma(q)=q'$. Also, $\sigma(P_{x_i}) = P_{x_i'}$ since both $\sigma(P_{x_i})$ and $P_{x_i'}$ must contain $p'$. Hence, we have that $\sigma(\Theta) = \Theta'$.
\end{proof}

The geometry $\PP(\Gamma)$ also has a duality (i.e: a proper correlation of order $2$), exchanging type $0$ and type $1$ elements.

\begin{prop}
    The map $\alpha$ defined by $\alpha(p,i) =(p, i+1 \pmod 2)$ and $\alpha(x,P) = (x,\Bar{P})$ is a correlation of $\PP(\Gamma)$.
\end{prop}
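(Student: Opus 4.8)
The plan is to verify directly that $\alpha$ is a well-defined bijection on the element set of $\PP(\Gamma)$, that it reverses (equivalently, here, preserves-or-swaps) the type function in the way required of a correlation, and that it respects the incidence relation $*'$ in both directions. Since $\alpha$ is visibly an involution once well-definedness is checked, order $2$ and bijectivity come for free, so the real content is the incidence check.

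First I would check that $\alpha$ lands in $\PP(\Gamma)$. On type-$0$ and type-$1$ elements this is clear: $(p,0)\mapsto(p,1)$ and $(p,1)\mapsto(p,0)$, and $X_0 = X_1 = S_0\times\{\cdot\}$ as sets, so this is a bijection $X_0\leftrightarrow X_1$. On an element $(x,P_x)$ with $t(x)=i\notin\{0,1\}$, I must check that $\Bar{P}_x\in\pi(\Gamma_x[0,1])$. If $\Gamma_x[0,1]$ is non-bipartite then $P_x=\Bar{P}_x=V(\Gamma_x[0,1])$ and there is nothing to do; if it is bipartite, then $\pi(\Gamma_x[0,1])=\{P_x,\Bar{P}_x\}$ by the discussion preceding Lemma~\ref{lem:adjacent}, so $\Bar{P}_x$ is indeed the other class. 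Thus $\alpha$ maps $X_i$ to $X_i$ for $i\notin\{0,1\}$ and swaps $X_0$ with $X_1$; in particular $t'(\alpha(\xi))$ depends only on $t'(\xi)$, which is the defining condition for a correlation, and $\alpha$ is not type-preserving, so it is a proper correlation. That $\alpha^2=\mathrm{id}$ follows since $\Bar{\Bar{P}}=P$ (immediate from $|\pi|\in\{1,2\}$) and $i+1+1\equiv i\pmod 2$; hence $\alpha$ is a bijection of order $2$.

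Next I would run through the four cases in the definition of $*'$ in Construction~\ref{constr:notBipartite} and check $\xi*'\psi \iff \alpha(\xi)*'\alpha(\psi)$. The case $i,j\in\{0,1\}$ is symmetric in $0,1$ and refers only to adjacency of $p,q$ in the graph $\Gamma[0,1]$, which $\alpha$ does not disturb. The case $i,j\notin\{0,1\}$ sends $(x,P_x)*'(y,P_y)\iff x*y,\ P_x\cap P_y\neq\emptyset$ to the condition $x*y,\ \Bar{P}_x\cap\Bar{P}_y\neq\emptyset$; so the crux is the elementary fact that for two bipartite connected graphs sharing incidences as in $\Gamma_x[0,1]\cap\Gamma_y[0,1]$, $P_x\cap P_y\neq\emptyset \iff \Bar{P}_x\cap\Bar{P}_y\neq\emptyset$ — which I would deduce from Lemma~\ref{lem:tripleintersection}'s argument (connectedness and bipartiteness of the intersection graph force the two partitions to agree up to swap), or more simply by noting $P_x\cap P_y=\emptyset$ would force $P_x\subseteq\Bar{P}_y$ hence $\Bar{P}_x\supseteq P_y\neq\emptyset$, a contradiction only if one also rules out degeneracies; the non-bipartite subcases are trivial since then some $\Bar P$ equals the whole vertex set. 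The two mixed cases $i=0,j\neq1$ and $i=1,j\neq0$ are interchanged by $\alpha$: $(p,0)*'(y,P_y)\iff p*y,\ p\in P_y$ becomes $(p,1)*'(y,\Bar{P}_y)\iff p*y,\ p\in\Bar{\Bar{P}_y}=P_y$, which is literally the condition in the $i=0$ clause; symmetrically for $i=1$. So incidence is preserved, and by applying the same to $\alpha^{-1}=\alpha$ it is reflected.

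The main obstacle I anticipate is precisely the bookkeeping in the $i,j\notin\{0,1\}$ case: making rigorous that $P_x\cap P_y\neq\emptyset\iff\Bar P_x\cap\Bar P_y\neq\emptyset$ when both residues are bipartite. I would isolate this as the one nontrivial lemma, proving it by the connectedness-of-the-intersection argument already used in Lemma~\ref{lem:tripleintersection}: if $\Gamma_x[0,1]\cap\Gamma_y[0,1]$ is connected and bipartite, its own two colour classes refine to $P_x\cap(\text{that graph})$ and similarly for $y$, so either $P_x,P_y$ restrict to the same class (whence $P_x\cap P_y$ and $\Bar P_x\cap\Bar P_y$ are both nonempty) or to opposite classes (whence both intersections are empty). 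Everything else is routine substitution.
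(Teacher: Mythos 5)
Your proof is correct and follows the same direct-verification route as the paper, whose entire proof is the single observation that $(p,i) *' (x,P)$ implies $(p,i+1 \pmod 2) *' (x,\Bar{P})$, i.e.\ exactly your mixed-type case. The only substantive point you add is the case $i,j\notin\{0,1\}$, where one needs $P_x\cap P_y\neq\emptyset \iff \Bar{P}_x\cap\Bar{P}_y\neq\emptyset$ whenever $x*y$; your colour-class argument (the restriction of $P_x$ and of $P_y$ to the connected bipartite graph $\Gamma_{\{x,y\}}[0,1]$ is a colour class of that graph, and both classes are nonempty because a chamber through $\{x,y\}$ supplies an edge of the common residue) is sound and is essentially the argument of Lemma~\ref{lem:tripleintersection}, while your ``more simply'' aside is indeed inconclusive as you note and can be dropped.
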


\begin{proof}
    The essential thing to note it that if $(p,i) *' (x,P)$ then $(p,i+1 \pmod 2) *'(x,\Bar{P})$.
\end{proof}

\subsection{Thinness}\hfill
\\

We quickly verify that being thin is preserved by the partioned geometry.
\begin{prop}\label{prop:thin}
    The geometry $\PP(\Gamma)$ is thin if and only if $\Gamma$ is thin.
\end{prop}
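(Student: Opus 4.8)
The plan is to show that thinness of $\PP(\Gamma)$ amounts to thinness of $\Gamma$ by checking residues of corank one on both sides, using the residue classification already established in Lemmas~\ref{lem:residue0}, \ref{lem:residue0J}, \ref{lem:residue01} and \ref{lem:residueNot01}. Recall that a geometry is thin precisely when every flag of corank one has exactly two extensions to a chamber, equivalently when every rank one residue of a flag of corank one has exactly two elements. So I would fix a flag $\Phi$ of $\PP(\Gamma)$ of corank one, say of cotype $\{i\}$, and count the elements of $\PP(\Gamma)_\Phi$, distinguishing cases according to whether $i\in\{0,1\}$ or not, and whether $\Phi$ itself contains $0$-- or $1$--elements.

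First I would handle the case $i\notin\{0,1\}$. If moreover $0\in t(\Phi)$ or $1\in t(\Phi)$, then by Lemma~\ref{lem:residue0J} the residue $\PP(\Gamma)_\Phi$ is isomorphic to $\Gamma_F$ for a corank-one flag $F$ of $\Gamma$, so it has two elements iff $\Gamma$ is thin at $F$; and conversely every corank-one flag of $\Gamma$ arises this way. If instead $\{0,1\}\cap t(\Phi)=\emptyset$, then $\Phi$ has type $J\subseteq I\setminus\{0,1\}$ and cotype $\{i\}\supsetneq\emptyset$ still contains $\{0,1\}$ only if $n=3$; in general for the corank-one residue of cotype $\{i\}$ with $i\neq 0,1$ we still have $0,1\notin J$ only when $n=3$, so more carefully: if $n\ge 4$ and the cotype is a single element $i\neq0,1$, then $0,1\in J$, and Lemma~\ref{lem:residue0J} again applies. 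Thus the only genuinely new situation is the corank-one residue of cotype $\{i\}$ with $i\in\{0,1\}$.

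So the crux is the case $i\in\{0,1\}$, say $i=0$ (the case $i=1$ being symmetric via the correlation $\alpha$ of the previous proposition, or by an identical argument). Here $\Phi=\{(q,1),(x_2,P_{x_2}),\dots,(x_{n-1},P_{x_{n-1}})\}$ has cotype $\{0\}$, and I want to count the $0$-elements of $\PP(\Gamma)$ incident to $\Phi$, i.e. the $(p,0)$ with $p\sim q$ in $\Gamma[0,1]$ and $p\in P_{x_j}$ for all $j$. Passing to the flag $F=\{e,x_2,\dots,x_{n-1}\}$ of $\Gamma$ where $e$ is the unique edge through $q$ determined by $(B_1)$ once we fix the partition data — here I should instead use the residue description: by the reasoning in Lemma~\ref{lem:residue01}/\ref{lem:residueNot01} the set of valid $p$ is exactly the vertex set of one side of a partitioned neighbourhood geometry $\tilde{\GG}(\cap P_{x_j})$ restricted to neighbours of $q$, which matches the $0$-elements of $\Gamma$ in the corank-one residue $\Gamma_{F}$ with $F = \{e,x_2,\dots,x_{n-1}\}$, $e=\{p,q\}$. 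Concretely: $(B_2)$ forces each $x_j * e$, so $F$ is a flag of $\Gamma$ of corank one (cotype $\{0\}$), and the map $(p,0)\mapsto p$ is a bijection from $\PP(\Gamma)_\Phi$ to $\Gamma_F$; hence $|\PP(\Gamma)_\Phi|=2$ iff $\Gamma$ is thin at $F$. Running this over all corank-one flags $\Phi$ of cotype $\{0\}$ or $\{1\}$ recovers exactly the corank-one flags of $\Gamma$ of cotype $\{0\}$ or $\{1\}$, so $\PP(\Gamma)$ is thin iff $\Gamma$ is thin.

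The step I expect to be the main obstacle is making the correspondence $\Phi\leftrightarrow F$ in the cotype-$\{0\}$ (and $\{1\}$) case airtight: one must verify that fixing the partition labels $P_{x_j}$ together with the $1$-element $(q,1)$ pins down a unique edge $e=\{p,q\}$ of $\Gamma$ for the relevant $p$'s — this uses $(B_1)$ (edges are uniquely vertex-describable) and $(B_2)$ (incidence of an $i$-element with an edge is governed by $0$-shadows) — and, in the bipartite-subresidue situation, that the partition condition $p\in \bigcap P_{x_j}$ is not an extra constraint beyond incidence with $F$, which is precisely Lemma~\ref{lem:tripleintersection} applied inside the residue. Once that bijection is in place the counting is immediate, so I would spend the bulk of the write-up on that identification and treat the $i\notin\{0,1\}$ cases as a short appeal to Lemma~\ref{lem:residue0J}.
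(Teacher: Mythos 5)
Your overall strategy --- reduce thinness to counting corank-one residues and invoke the residue lemmas --- is the same as the paper's, and your treatment of cotype $\{i\}$ with $i\notin\{0,1\}$ is fine (indeed, for \emph{any} corank-one flag of $\PP(\Gamma)$ the type contains $0$ or $1$, so Lemma~\ref{lem:residue0J} applies uniformly; your detour about $n=3$ is unnecessary but harmless). The paper's proof consists precisely of this uniform appeal to Lemma~\ref{lem:residue0J}, in both directions.

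The crux case, however, contains a genuine error. For $\Phi=\{(q,1),(x_2,P_{x_2}),\dots,(x_{n-1},P_{x_{n-1}})\}$ of cotype $\{0\}$ you identify $\PP(\Gamma)_\Phi$ with $\Gamma_F$ for $F=\{e,x_2,\dots,x_{n-1}\}$, $e=\{p,q\}$, via $(p,0)\mapsto p$. This cannot work: first, $e$ depends on the element $(p,0)$ being counted, so $F$ is not a single well-defined flag; second, for any such cotype-$\{0\}$ flag $F$ of $\Gamma$ the residue $\Gamma_F$ consists of the vertices incident to $e$ and to all $x_j$, and by $(B_1)$ together with the forward implication of $(B_2)$ this is always exactly $\sigma_0(e)=\{p,q\}$, i.e.\ two elements, irrespective of whether $\Gamma$ is thin. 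Hence ``$|\PP(\Gamma)_\Phi|=2$ iff $\Gamma$ is thin at $F$'' carries no information and is false as a biconditional: if three edges of $\Gamma$ pass through $q$ and are incident to every $x_j$ (thinness of $\Gamma$ failing at the cotype-$\{1\}$ flag $\{q,x_2,\dots,x_{n-1}\}$), then $\PP(\Gamma)_\Phi$ contains three $0$-elements while your $\Gamma_F$ still has two. The correct identification --- the restriction of the isomorphism of Lemma~\ref{lem:residue0} as in Lemma~\ref{lem:residue0J}, which you could simply cite here since $1\in t(\Phi)$ --- sends $(p,0)$ to the edge $\{p,q\}$ and matches $\PP(\Gamma)_\Phi$ with the cotype-$\{1\}$ residue $\Gamma_{F'}$, $F'=\{q,x_2,\dots,x_{n-1}\}$; your observation via Lemma~\ref{lem:adjacent} that the partition constraints $p\in P_{x_j}$ are automatic is exactly what makes this a bijection. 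Note that the cotype gets swapped, so in the converse direction you do not ``recover exactly the corank-one flags of $\Gamma$ of cotype $\{0\}$ or $\{1\}$'': cotype-$\{0\}$ flags of $\Gamma$ are not hit by this correspondence, and their thinness has to be observed separately --- which is easy, since, as noted above, $(B_1)$ and $(B_2)$ force those residues to have exactly two elements.
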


\begin{proof}
    In order to check that a geometry is thin, we need to check that all the residues of flags of corank one contains exactly two elements.

    Let $i \in I$. Lemma~\ref{lem:residue0J} tells us that the residues of flags $\Phi$ of cotype $i$ are all isomorphic to residues of some flag $F$ of cotype $i$ in $\Gamma$. Conversely, if $F$ is a flag of cotype $i$ in $\Gamma$, there always exists a flag $\Phi$ of cotype $i$ in $\PP(\Gamma)$ such that $\PP(\Gamma)_{\Phi}$ is isomorphic to $\Gamma_F$.
\end{proof}

\subsection{Conditions $(B_1)$ and $(B_2)$ in $\PP(\Gamma)$}\label{subsec:PPb1b2}\hfill

We now investigate whether the Construction~\ref{constr:notBipartite} can be used more than once or not. 

\begin{prop}\label{prop:b1b2}
    Suppose that $\Gamma$ satisfies conditions $(B_1)$ and $(B_2)$ for some pair $\{k,l\} \subset I \setminus \{0,1\}$. Then $\PP(\Gamma) = \PP(\Gamma)(0,1)$ also satisfies $(B_1)$ and $(B_2)$ for the pair $\{k,l\}$ if and only if, for all $k$-elements $x$ and $l$-elements $y$ with $x*y$, we have that either $\Gamma_x[0,1]$ is bipartite or both $\Gamma_x[0,1]$ and $\Gamma_y[0,1]$ are not bipartite, where all $\{0,1\}$-truncations as seen as graphs.
\end{prop}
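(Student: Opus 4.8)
The plan is to reduce both conditions for the pair $\{k,l\}$ to a counting of incidences among the type-$k$ and type-$l$ elements of $\PP(\Gamma)$, keeping careful track of the equivalence classes $\pi(\Gamma_z[0,1])$. Two preliminary observations form the toolkit. Since $\Gamma$ satisfies $(B_2)$ for $\{0,1\}$, for every element $z$ of $\Gamma$ of type not in $\{0,1\}$ the graph $\Gamma_z[0,1]$ is exactly the subgraph of $\Gamma[0,1]$ induced on the vertices incident to $z$; consequently, for an incident pair $x*y$ with $t(x)=k$, $t(y)=l$, the graph $\Gamma_{\{x,y\}}[0,1]$ is the induced subgraph on the common neighbourhood, it is connected (by residual connectedness of $\Gamma$) and, being the $\{0,1\}$-truncation of a residue of a flag, hence a geometry of rank $n-2\geq 2$, it has no isolated vertices and therefore at least one edge. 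Secondly, each element $x$ of $\Gamma$ of type $k$ or $l$ gives rise to exactly $|\pi(\Gamma_x[0,1])|\in\{1,2\}$ elements of $\PP(\Gamma)$ of the same type, two precisely when $\Gamma_x[0,1]$ is bipartite. I would first record these facts together with Lemma~\ref{lem:adjacent}.

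For $(B_1)$: fix an incident pair $x*y$ in $\Gamma$ of types $k$ and $l$ and, for a fixed lift $(y,P_y)$ of $y$, determine which lifts $(x,P_x)$ of $x$ are incident to it in $\PP(\Gamma)$, i.e., which $P_x\in\pi(\Gamma_x[0,1])$ meet $P_y$. Because $\Gamma_{\{x,y\}}[0,1]$ is connected with at least one edge it meets $P_y$, so there is always at least one such $P_x$; and, using Lemma~\ref{lem:adjacent} and the fact that the $2$-colouring of a connected bipartite graph restricts from the colouring of any connected bipartite supergraph, there is exactly one unless $\Gamma_x[0,1]$ is not bipartite (so that $P_x$ is the whole vertex set, which then meets both classes of a bipartite $\Gamma_y[0,1]$) while $\Gamma_y[0,1]$ is bipartite. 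In that situation the lift $(y,P_y)$ acquires at least three incident type-$k$ elements, so the $\{k,l\}$-truncation of $\PP(\Gamma)$ is not the geometry of a simple graph and $(B_1)$ fails; conversely, when this situation never occurs each lift of a type-$l$ element has the expected number of incident type-$k$ elements, and a similar computation handles the requirement that distinct lifts of a single element of $\Gamma$ have distinct neighbourhoods, so the $\{k,l\}$-truncation is the geometry of a simple graph. This establishes the ``only if'' direction of the proposition by contraposition, and the $(B_1)$ half of the ``if'' direction.

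For $(B_2)$ I would invoke Lemma~\ref{lem:tripleintersection}, or rather the slightly stronger statement that comes out of its proof: if $\{x,y,z\}$ is a flag of $\Gamma$ of three distinct types, none in $\{0,1\}$, and $P_x,P_y,P_z$ are chosen equivalence classes, then as soon as two of $P_x\cap P_y$, $P_y\cap P_z$, $P_x\cap P_z$ are nonempty the triple intersection is nonempty. Given a type-$k$ element $\epsilon=(x,P_x)$ of $\PP(\Gamma)$, a type-$i$ element $\chi$ with $i\neq k,l$, and writing $y_1,y_2$ for the type-$l$ neighbours of $x$ in $\Gamma$, I would verify $\epsilon *' \chi \iff \sigma_l(\epsilon)\subseteq\sigma_l(\chi)$ by cases on $i$. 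When $i\notin\{0,1\}$, write $\chi=(z,P_z)$: the ``$x$ incident to $z$ iff both $y_1,y_2$ incident to $z$'' part is $(B_2)$ for $\{k,l\}$ in $\Gamma$, and the remaining ``$P_x\cap P_z\neq\emptyset$ iff $P_{y_1}\cap P_z\neq\emptyset$ and $P_{y_2}\cap P_z\neq\emptyset$'' part is exactly the triple-intersection statement applied to the flags $\{x,z,y_j\}$, the needed hypothesis being available because $P_{y_j}$ was chosen to meet $P_x$. When $i\in\{0,1\}$, say $\chi=(p,0)$: reduce ``$p$ incident to $x$'' to ``$p$ incident to $y_1$ and $y_2$'' again by $(B_2)$ for $\{k,l\}$ in $\Gamma$, and then match the class memberships $p\in P_x$ versus $p\in P_{y_j}$; here the condition of the proposition guarantees that the class $P_{y_j}$ meeting $P_x$ is uniquely determined, and connectedness of $\Gamma_{\{x,y_j\}}[0,1]$ together with Lemma~\ref{lem:adjacent} identifies it as the class containing $p$ exactly when $p\in P_x$. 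Putting the cases together, $(B_2)$ for $\{k,l\}$ holds in $\PP(\Gamma)$ whenever the stated condition does; combined with the previous paragraph this finishes the ``if'' direction.

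The hard part is not any single step but the consistent bookkeeping of the two bipartite classes along chains of incident elements: one repeatedly uses that the $\{0,1\}$-truncation of a residue of a flag of $\Gamma$ is connected, so that a $2$-colouring of a larger residue restricts to the $2$-colouring of a smaller one and ``lying in class $P$'' propagates coherently. This is most delicate in the $i\in\{0,1\}$ case of $(B_2)$, where the incidence rules of Construction~\ref{constr:notBipartite} genuinely distinguish $P_y$ from $\Bar{P_y}$, so the argument cannot be carried out merely up to swapping the two classes; and one must not overlook the degenerate behaviour when $\Gamma_x[0,1]$ is not bipartite, where $P_x$ is the entire vertex set and hence meets every class of every neighbouring residue --- the observation that makes the forbidden configuration genuinely obstruct $(B_1)$.
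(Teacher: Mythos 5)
Your overall strategy (counting lifts, propagating $2$-colourings through connected residues, invoking Lemma~\ref{lem:tripleintersection}) is in the spirit of the paper's proof, but the $(B_1)$ analysis contains a genuine error that breaks the ``only if'' direction and leaves the ``if'' direction unjustified. In the forbidden configuration ($\Gamma_x[0,1]$ not bipartite, $\Gamma_y[0,1]$ bipartite) a fixed lift $(y,P_y)$ does \emph{not} acquire three or more incident type-$k$ elements: $x$ has a single lift, so it contributes exactly one neighbour, and the other type-$k$ element $x_2$ incident to $y$ also contributes exactly one, so each lift of $y$ still has exactly two type-$k$ neighbours. What actually goes wrong is on the other side of the count, which your parenthetical remark in fact describes: the unique lift of $x$ meets both $P_y$ and $\bar{P}_y$, so the two distinct lifts $(y,P_y)$ and $(y,\bar{P}_y)$ become incident to the \emph{same} type-$k$ lifts, producing a multiple edge in the $\{k,l\}$-truncation; this double-edge phenomenon, not a vertex of degree three on an ``edge'', is how $(B_1)$ fails (and is the paper's converse argument). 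You have conflated ``lifts of $x$ incident to a fixed lift of $y$'' with ``lifts of $y$ incident to the fixed lift of $x$'', so your proof that the condition is necessary does not go through as written. Moreover, your claim ``exactly one unless $\Gamma_x[0,1]$ is not bipartite while $\Gamma_y[0,1]$ is bipartite'' silently assumes that the case $\Gamma_x[0,1]$ bipartite, $\Gamma_y[0,1]$ not bipartite cannot occur; that case is \emph{not} excluded by the hypothesis of the proposition (the disjunction is satisfied when $\Gamma_x[0,1]$ is bipartite), and if it occurred the single lift of $y$ would be incident to both lifts of $x$, giving more than two type-$k$ neighbours and destroying $(B_1)$ even under the stated condition. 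Its impossibility is exactly the paper's opening observation, which you neither state nor prove: since the pair $\{k,l\}$ is a leaf of the diagram with endpoint $k$, residual connectedness forces $\Gamma_y[0,1]$ to be a subgraph of $\Gamma_x[0,1]$, so the condition is equivalent to ``both bipartite or both non-bipartite''. This containment is also what the paper uses to pin down the partition of the common neighbour ($P_{y'}=P_{x_1}\cap\Gamma_y[0,1]$), i.e.\ the uniqueness half of $(B_1)$ that you dispatch with ``a similar computation''.

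In the $(B_2)$ part you have additionally swapped the roles of $k$ and $l$: with the convention of the statement (and of your own $(B_1)$ paragraph), $l$-elements are the ``edges'', each incident to exactly two type-$k$ elements, and $(B_2)$ for $\{k,l\}$ is a statement about an $l$-element and its $k$-shadow. Your verification instead starts from a type-$k$ element $(x,P_x)$ with ``two type-$l$ neighbours $y_1,y_2$'', which is not what $(B_1)$ provides (a $k$-element generally lies on many $l$-elements) and checks the condition $\sigma_l(\epsilon)\subseteq\sigma_l(\chi)$, which is not $(B_2)$. The underlying mechanism you propose --- $(B_2)$ for $\{k,l\}$ in $\Gamma$ plus a triple-intersection argument for $i\notin\{0,1\}$, and colour-matching via connected residues for $i\in\{0,1\}$ --- is sound and close to the paper's (which gets $P_z\cap P_y\neq\emptyset$ directly from the subgraph containment), so this part is likely repairable by exchanging the two letters throughout; but as written it proves the wrong statement. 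To fix the proposal you should first establish the containment $\Gamma_y[0,1]\subseteq\Gamma_x[0,1]$, redo the $(B_1)$ count with it, and replace the ``three incident elements'' claim by the double-edge argument for the converse.
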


\begin{proof}
    First, since $\Gamma$ satisfies $(B_1)$ and $(B_2)$ for the pair $\{k,l\}$, it means that the diagram $D$ of $\Gamma$ has a leaf $(k,l)$ with endpoint $k$. Therefore, for $x$ and $y$ as in the statement of the proposition, we have that $\Gamma_y[0,1]$ is a subgraph of $\Gamma_x[0,1]$. The condition of the proposition can thus be rephrased as either both $\Gamma_y[0,1]$ and $\Gamma_x[0,1]$ are bipartite or neither are. 

    Hence, suppose that either both $\Gamma_x[0,1]$ and $\Gamma_y[0,1]$ are bipartite or neither are.
    Let us show that $\PP(\Gamma)(0,1)$ satisfies $(B_1)$ for $\{k,l\}$. Let $(y,P_y)$ be an element of type $l$ of $\PP(\Gamma)(0,1)$.
    Since $\Gamma$ satisfies $(B_1)$ at $\{k,l\}$, there are exactly two elements $x_1$ and $x_2$ of type $k$ incident to $y$ and $y$ is the only element of type $l$ incident to both $x_1$ and $x_2$. Then, in both cases, there exist $(x_1,P_{x_1})$ and
    $(x_2,P_{x_2})$ incident to $(y,P_y)$. Suppose that there is a third element $(z,P_z)$, of type $k$ also, incident to $(y,P_y)$. This means that $z * y$ in $\Gamma$ and thus $z = x_1$ or $z = x_2$, else we would contradict $(B_1)$ for $\Gamma$. But the only way that $(y,P_y)$ is incident to both $(x_i,P_{x_i})$ and $(x_i,\bar{P}_{x_i})$ ($i\in\{1,2\}$) is if $\Gamma_y[0,1]$ is not bipartite but $\Gamma_{x_i}[0,1]$ is. Suppose  now that $(y',P_{y'})$ is an element of type $l$ in $\PP(\Gamma)$ that is also incident to both $(x_1,P_{x_1})$ and
    $(x_2,P_{x_2})$. Then, we get that $y'$ is incident to both $x_1$ and $x_2$ in $\Gamma$, so that $y'=y$. It also follows that $P_y = P_{y'}$. Indeed, since $\Gamma_y[0,1]$ is a subgraph of $\Gamma_{x_1}[0,1]$, the fact that $P_{y'} \cap P_{x_1} \neq \emptyset$ implies that $P_{y'} = P_{x_1} \cap \Gamma_y[0,1]$. Since the same holds for $P_y$, we can conclude that $P_y = P_{y'}$.

    We now show that $(B_2)$ holds for $\{k,l\}$ in $ \PP(\Gamma)(0,1)$. Let $(y,P_y)$ be of type $l$ and let $(x_1,P_{x_1})$ and $(x_2,P_{x_2})$ be the unique two elements of type $k$ incident to $(y,P_y)$. Since the diagram of $\PP(\Gamma)(0,1)$ has a leaf at $\{k,l\}$, every elements $(z,P_z)$ of type $i$ incident to $(y,P_y)$ is also automatically incident to both $(x_1,P_{x_1})$ and $(x_2,P_{x_2})$. Conversely, suppose that $(z,P_z)$ is incident to both $(x_1,P_{x_1})$ and $(x_2,P_{x_2})$. Then $z$ must be incident to both $x_1$ and $x_2$ and thus, by $(B_2)$ for $\Gamma$, also to $y$. But then $(z,P_z)$ is incident to $(y,P_y)$. Indeed, $P_z \cap P_y \neq \emptyset$ since, once again, $\Gamma_y[0,1]$ is a subgraph of $\Gamma_{x_1}[0,1]$ and $P_y \cap P_{x_1} \neq \emptyset$.

    Conversely, suppose instead that $\Gamma_y[0,1]$ is bipartite but $\Gamma_x[0,1]$ is not. Then, both $(y,P_y)$ and $(y,\Bar{P}_y)$ are both incident to both $(x_1,P_{x_1} = \Bar{P}_{x_1})$ and $(x_2,P_{x_2} = \Bar{P}_{x_2})$ so that there are double edges, failing $(B_1)$.
\end{proof}

\subsection{Main result}\hfill
\\

We summarize the conclusion of this section in one theorem.

\begin{thm}\label{thm:main1}
    Let $\Gamma$ be a residually connected geometry of rank $n \geq 3$ over the type set $I = \{0,1,\cdots,n-1\}$ that satisfies $(B_1)$ and $(B_2)$ and such that $\Gamma[0,1]$ is not a bipartite graph. If the diagram of $\Gamma$ is denoted as Figure~\ref{fig:leafDiagram}, there exists a geometry $\PP(\Gamma)(0,1)$ with diagram

    \begin{center}
         \begin{tikzpicture}[scale = 0.5]
    
         \filldraw[black] (2,-4) circle (2pt)  node[anchor=east]{$1$};
         \filldraw[black] (2,4) circle (2pt)  node[anchor=east]{$0$};
         \filldraw[black] (7,0.8) circle (0pt)  node[anchor=north]{$\vdots$};
         \filldraw[black] (7,2) circle (2pt)  node[anchor=west]{$3$};
         \filldraw[black] (7,-2) circle (2pt)  node[anchor=south]{$n-2$};
         \filldraw[black] (7,4) circle (2pt)  node[anchor=west]{$2$};
         \filldraw[black] (7,-4) circle (2pt)  node[anchor=south]{$n-1$};
         \draw (2,4) -- (2,-4)node [midway,left] (TextNode) {$\tilde{\mathcal{S}}(P)$};
         \draw (2,4) -- (7,2)node [midway] (TextNode) {$K_3$};
         \draw (2,-4) -- (7,2)node [midway] (TextNode) {$K_3$};
         \draw (2,4) -- (7,4)node [midway] (TextNode) {$K_2$};
         \draw (2,-4) -- (7,4)node [midway] (TextNode) {};
         \draw (2,4) -- (7,-2)node[midway] (TextNode) {};
         \draw (2,-4) -- (7,-2)node[midway] (TextNode) {$K_{n-2}$};
         \draw (2,4) -- (7,-4)node[midway] (TextNode) {$K_{n-1}$};
         \draw (2,-4) -- (7,-4)node[midway] (TextNode) {$K_{n-1}$};
         \draw (7,0) ellipse (2cm and 5cm);

    \end{tikzpicture}
    \end{center}

    where $\tilde{\mathcal{S}}(P)$ is the set $\{\tilde{\GG}(P) \mid \GG \in S, P \in \pi(\GG)\}$ and the diagram of $I \setminus \{0,1\}$ is the same as for $\Gamma$.
    Furthermore, the following properties hold:
    \begin{itemize}
    
        \item $\PP(\Gamma)(0,1)$ is residually connected.
        \item If $G$ is a group of automorphisms of $\Gamma$ acting flag-transitively on the chambers, then $G$ is also a group of automorphisms of $\PP(\Gamma)(0,1)$ acting flag-transitively on the chambers. Moreover, if the action on $\Gamma$ was faithful, then the action on $\PP(\Gamma)(0,1)$ is also faithful. 
        \item $\PP(\Gamma)(0,1)$ is thin if and only if $\Gamma$ is thin.
        \item Suppose there is some pair $\{k,l\} \subset I\setminus \{0,1\}$ for which conditions $(B_1)$ and $(B_2)$ hold. Then $\PP(\Gamma)(0,1)$ also satisfies $(B_1)$ and $(B_2)$ for the pair $\{k,l\}$ if and only if, for all $k$-elements $x$ and $l$-elements $y$, with $x*y$, we have that $\Gamma_x[0,1]$ is bipartite or both $\Gamma_x[0,1]$ and $\Gamma_y[0,1]$ are not bipartite, where all $\{0,1\}$-truncations are seen as graphs.
    \end{itemize}
\end{thm}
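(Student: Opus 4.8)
The plan is to assemble the theorem from the lemmas and propositions already established in this section; almost all of the content is bookkeeping, and the only genuinely new thing to verify is the shape of the diagram. First I would establish the diagram. The nodes of $I\setminus\{0,1\}$ and the edges among them: by Lemma~\ref{lem:residueNot01}, a rank-two residue of type $\{k,l\}$ with $k,l\neq 0,1$ of $\PP(\Gamma)$ is isomorphic to a rank-two residue of type $\{k,l\}$ of some $\Gamma_F$ (after forgetting partitions), hence to a rank-two residue of $\Gamma$ of that type; so the induced diagram on $I\setminus\{0,1\}$ is literally the one of $\Gamma$. For an edge $\{0,j\}$ or $\{1,j\}$ with $j\neq 0,1$: take a flag $\Phi$ of $\PP(\Gamma)$ of cotype $\{0,j\}$ (resp. $\{1,j\}$); by Lemma~\ref{lem:residue0J} applied to the complementary flag, or more directly by unwinding the definition, $\PP(\Gamma)_\Phi$ is isomorphic to the residue $\Gamma_{F}$ of the corresponding flag $F$ of cotype $\{0,j\}$ in $\Gamma$, i.e. to a rank-two residue of $\Gamma$ of type $\{0,j\}$; since $0$-elements and $1$-elements of $\PP(\Gamma)$ both come from $0$-elements of $\Gamma$, this pins the labels $K_2,\dots,K_{n-1}$ on both the $0$- and the $1$-side exactly as drawn. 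Finally, for the edge $\{0,1\}$: take a flag $\Phi$ of cotype $\{0,1\}$; by Lemma~\ref{lem:residue01}, $\PP(\Gamma)_\Phi\cong\tilde{\GG}(P)$ where $\GG=\Gamma_F$ is a graph in the class $\mathcal S$ and $P\in\pi(\GG)$, so the label on the $\{0,1\}$-edge is precisely $\tilde{\mathcal S}(P)=\{\tilde{\GG}(P)\mid \GG\in\mathcal S,\ P\in\pi(\GG)\}$.

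Next I would dispatch the four bulleted properties. That $\PP(\Gamma)(0,1)$ is a geometry is Theorem~\ref{PGammageom}; residual connectedness is exactly Proposition~\ref{prop:RC}. For flag-transitivity: Proposition~\ref{prop:autom} gives that a chamber-transitive group $G$ of automorphisms of $\Gamma$ acts chamber-transitively on $\PP(\Gamma)$ via $\sigma\mapsto\PP(\sigma)$; for faithfulness one observes that if $\PP(\sigma)$ fixes every element of $\PP(\Gamma)$, then in particular $\sigma$ fixes every $0$-element of $\Gamma$ (looking at the type-$0$ elements $(p,0)$) and, reading off the first coordinates of the elements $(x,P_x)$, $\sigma$ fixes every element of every other type, so $\sigma=\mathrm{id}$; hence $\sigma\mapsto\PP(\sigma)$ is injective and faithfulness is preserved. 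Thinness is Proposition~\ref{prop:thin}, and the last bullet on the persistence of $(B_1)$ and $(B_2)$ at a pair $\{k,l\}\subset I\setminus\{0,1\}$ is verbatim Proposition~\ref{prop:b1b2}.

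The only point requiring a little care — and the step I expect to be the main obstacle — is making the diagram argument fully rigorous, specifically checking that every rank-two residue of $\PP(\Gamma)$ of a given type is captured by one of Lemmas~\ref{lem:residue0J}, \ref{lem:residue01}, \ref{lem:residueNot01}, so that the class of rank-two residues appearing on each edge is exactly the claimed one and nothing larger. This amounts to noting that a rank-two residue of type $\{a,b\}$ is $\PP(\Gamma)_\Phi$ for $\Phi$ a flag of cotype $\{a,b\}$, and that such a $\Phi$ falls into exactly one of the cases treated by those lemmas according to whether $\{a,b\}$ avoids, meets in one point, or contains $\{0,1\}$; in each case the cited lemma identifies $\PP(\Gamma)_\Phi$ with a residue of $\Gamma$ (or of a residue of $\Gamma$, which is again a residue of $\Gamma$) of the corresponding type, or — only when $\{a,b\}=\{0,1\}$ — with a partitioned neighbourhood geometry $\tilde{\GG}(P)$ of a graph $\GG$ in $\mathcal S$. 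Once this case analysis is laid out, the diagram of Figure~\ref{fig:leafDiagram} for $\Gamma$ transforms into the displayed diagram for $\PP(\Gamma)(0,1)$ in the obvious way, and the theorem follows by collecting the above references.
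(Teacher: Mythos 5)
Your proposal is correct and follows essentially the same route as the paper, whose proof simply assembles Theorem~\ref{PGammageom} with Propositions~\ref{prop:RC}, \ref{prop:autom}, \ref{prop:thin} and \ref{prop:b1b2}. In fact you are slightly more thorough than the paper: you spell out the diagram verification via Lemmas~\ref{lem:residue0J}, \ref{lem:residue01} and \ref{lem:residueNot01}, and you supply the faithfulness argument (note only that $1$-elements of $\Gamma$ are recovered from their vertex pairs via $(B_1)$), both of which the paper leaves implicit.
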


\begin{proof}
    This is a consequence of Theorem~\ref{PGammageom} and Propositions~\ref{prop:RC}, ~\ref{prop:autom}, ~\ref{prop:thin} and 
    ~\ref{prop:b1b2}.
\end{proof}

This result is in some sense a generalization of Theorem 4.1 of \cite{Leemans2000}, as the hypothesis on the connectdness of the neighborhood geometries is dropped.
Note that if $|\pi(G)| = 1$ for every $G \in \GG$, we have that $\Tilde{S}(P)$ is just the set $\Tilde{\GG}$ of neighborhood geometries of $\GG$, as defined in \cite{Leemans2000}. 
Also, in that case, it must be that all $G \in \GG$ are not bipartite, and hence contain at least one odd cycle. We thus precisely recover the Construction~\ref{NBPRH} of \cite{Leemans2000}.

\section{The halving geometry and non-degenerate leaf hypertopes}\label{sec:halvinggeom}

The goal of this section is to define a new geometry based on the partitioned geometry, which we will call the \emph{halving geometry}. We will also show that the halving geometry of a non-degenerate leaf regular hypertope $\Gamma$ is isomorphic to the geometry obtain as the coset geometry of the halving group of $\Gamma$.

Recall a similar result to Theorem~\ref{thm:main1}, from \cite{Leemans2000}, for the construction $\BP(\Gamma)(0,1)$ given in Construction~\ref{BPRH}.

\begin{thm}\label{thm:BP_fromDimi}
    Let $\Gamma$ be a geometry of rank $n \geq 3$ over the type set $I = \{0,1,\cdots,n-1\}$ that satisfies $(B_1)$ and $(B_2)$ and such that $\Gamma[0,1]$ is bipartite graph with bipartition $\{X_0,X_1\}$. If the diagram of $\Gamma$ is denoted as Figure~\ref{fig:leafDiagram}, there exists a geometry $\BP(\Gamma)(0,1)$ with diagram

    \begin{center}
         \begin{tikzpicture}[scale = 0.5]
    
         \filldraw[black] (2,-4) circle (2pt)  node[anchor=east]{$1$};
         \filldraw[black] (2,4) circle (2pt)  node[anchor=east]{$0$};
         \filldraw[black] (7,0.8) circle (0pt)  node[anchor=north]{$\vdots$};
         \filldraw[black] (7,2) circle (2pt)  node[anchor=west]{$3$};
         \filldraw[black] (7,-2) circle (2pt)  node[anchor=south]{$n-2$};
         \filldraw[black] (7,4) circle (2pt)  node[anchor=west]{$2$};
         \filldraw[black] (7,-4) circle (2pt)  node[anchor=south]{$n-1$};
         \draw (2,4) -- (2,-4)node [midway,left] (TextNode) {$\tilde{\mathcal{S}}(P)$};
         \draw (2,4) -- (7,2)node [midway] (TextNode) {$K_3$};
         \draw (2,-4) -- (7,2)node [midway] (TextNode) {$K_3$};
         \draw (2,4) -- (7,4)node [midway] (TextNode) {$K_2$};
         \draw (2,-4) -- (7,4)node [midway] (TextNode) {};
         \draw (2,4) -- (7,-2)node[midway] (TextNode) {};
         \draw (2,-4) -- (7,-2)node[midway] (TextNode) {$K_{n-2}$};
         \draw (2,4) -- (7,-4)node[midway] (TextNode) {$K_{n-1}$};
         \draw (2,-4) -- (7,-4)node[midway] (TextNode) {$K_{n-1}$};
         \draw (7,0) ellipse (2cm and 5cm);

    \end{tikzpicture}
    \end{center}

    where $\tilde{\mathcal{S}}(P)$ is the set $\{\tilde{\GG}(P) \mid \GG \in S, P \in \pi(\GG), P\cap X_0\neq \emptyset\}$ and the diagram of $I \setminus \{0,1\}$ is the same as for $\Gamma$.
    Furthermore, the following properties hold:
    \begin{itemize}
    
        \item $\BP(\Gamma)(0,1)$ is (residually) connected if and only if $\Gamma$ is (residually) connected.
        \item if $H$ is a group of automorphisms of $\BP(\Gamma)(0,1)$ acting transitively on the chambers and if there is an automorphism $\alpha$ of $\BP(\Gamma)(0,1)$ permuting the types 0 and 1 and fixing
        $I\backslash\{0,1\}$ pointwise, then $\langle H, \alpha \rangle$ acts as a chamber-transitive automorphism group of $\Gamma$. Conversely, if $G$ is a chamber-transitive automorphism group of $\Gamma$, then $G = H\rtimes C_2$ for some $H$ acting as a chamber transitive automorphism group of $\BP(\Gamma)(0,1)$.
        \item $\BP(\Gamma)(0,1)$ is thin if and only if $\Gamma$ is thin.
        \item $\BP(\Gamma)(0,1)$ satisfies $(B_1)$ and $(B_2)$ for the pair $\{k,l\}\subset I\backslash\{0,1\}$ if and only if $\Gamma$ does.
    \end{itemize}
\end{thm}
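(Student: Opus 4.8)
The plan is to establish Theorem~\ref{thm:BP_fromDimi} by mirroring, in the bipartite setting, the arguments used for $\PP(\Gamma)$ in Section~\ref{sec:partitionedgeom}, and to invoke the corresponding results of \cite{Leemans2000} for Construction~\ref{BPRH} where convenient. First I would note that $\BP(\Gamma)$ is a geometry: every flag extends to a chamber, by the same flag-extension argument as in the proof of Theorem~\ref{PGammageom} (the bipartite case being in fact easier, as no partitions are attached to the $0$- and $1$-elements). For the diagram, the key computation is that, for $p\in X_i$ with $i\in\{0,1\}$, the residue $\BP(\Gamma)_p$ is isomorphic to $\Gamma_p$: send each element of type $\neq 0,1$ incident to $p$ to itself, and send a $j$-element $q$ (with $\{i,j\}=\{0,1\}$) incident to $p$ to the unique edge $\{p,q\}$ of $\Gamma$; this is a bijection by $(B_1)$ and incidence preserving by $(B_2)$, just as in Lemma~\ref{lem:residue0}. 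Hence the subdiagram on $I\setminus\{0,1\}$ is unchanged and both edges $\{0,i\}$ and $\{1,i\}$ carry the class $K_i$. For a flag $\Phi$ of cotype $\{0,1\}$, arising from a flag $F$ of $\Gamma$ of type $I\setminus\{0,1\}$, the residue $\BP(\Gamma)_\Phi$ is, by the definition of incidence in Construction~\ref{BPRH}, the partitioned neighbourhood geometry $\tilde{\GG}(P)$ with $\GG=\Gamma_F[0,1]$ and $P=X_0\cap V(\GG)$; since $\Gamma$ is residually connected this $\GG$ is connected, so $P$ is precisely the equivalence class of $\GG$ meeting $X_0$. This is the analogue of Lemma~\ref{lem:residue01}, and accounts for the label $\tilde{\mathcal{S}}(P)$ with the constraint $P\cap X_0\neq\emptyset$.

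For (residual) connectedness I would argue exactly as in Proposition~\ref{prop:RC}: by Lemma~\ref{lem:PnbghGeom}, $\BP(\Gamma)[0,1]=\tilde{\GG}(P)$ is connected iff $\Gamma[0,1]$ is, and then any two elements of $\BP(\Gamma)$ are joined by routing through incident $0$-elements and a path inside $\BP(\Gamma)[0,1]$; conversely a path in $\BP(\Gamma)$ projects to a path in $\Gamma$ upon replacing each element by itself and inserting, between consecutive type-$0$ and type-$1$ elements $p,q$, the edge $\{p,q\}$. For the residues of corank at least two, the analogues of Lemmas~\ref{lem:residue0J}--\ref{lem:residueNot01} (again with no partitions to track) show that each is either a residue of $\Gamma$ or a partitioned neighbourhood geometry of a connected graph, hence connected precisely when $\Gamma$ is residually connected.

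Thinness and the conditions $(B_1)$, $(B_2)$ for a pair $\{k,l\}\subset I\setminus\{0,1\}$ then follow quickly. Thinness is obtained, as in Proposition~\ref{prop:thin}, from the type-preserving bijection between the cotype-$i$ residues of $\BP(\Gamma)$ and those of $\Gamma$. For $(B_1)$ and $(B_2)$ at $\{k,l\}$, note that $\BP(\Gamma)$ has literally the same elements of type $\neq 0,1$ as $\Gamma$, with the same incidences among them and the same incidences between them and the $0$- and $1$-elements; hence $\BP(\Gamma)[k,l]=\Gamma[k,l]$ and the $k$-shadows appearing in $(B_1)$ and $(B_2)$ coincide with those computed in $\Gamma$, so the two conditions hold at $\{k,l\}$ in $\BP(\Gamma)$ if and only if they hold in $\Gamma$.

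The automorphism statement is the genuinely delicate part, and I would settle it by following the relevant lemmas of \cite{Leemans2000} after matching our notation to theirs. The structural point is that, $\Gamma[0,1]$ being connected and bipartite, the bipartition $\{X_0,X_1\}$ of its vertex set is canonical: every type-preserving automorphism of $\Gamma$ either fixes $\{X_0,X_1\}$ setwise or swaps $X_0$ and $X_1$. Those fixing $\{X_0,X_1\}$ restrict to type-preserving automorphisms of $\BP(\Gamma)$; conversely any automorphism of $\BP(\Gamma)$ extends to one of $\Gamma$ via $\{p,q\}\mapsto\{\sigma(p),\sigma(q)\}$ (well defined since edges are uniquely vertex-describable), and an $\alpha$ exchanging types $0$ and $1$ of $\BP(\Gamma)$ corresponds to a type-preserving automorphism of $\Gamma$ swapping $X_0$ and $X_1$. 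Using that every chamber of $\BP(\Gamma)$ lifts, on declaring its $0$-element to lie in $X_0$, to a chamber of $\Gamma$, one gets that $\langle H,\alpha\rangle$ is chamber-transitive on $\Gamma$ whenever $H$ is chamber-transitive on $\BP(\Gamma)$ and $\alpha$ exists; and conversely that, for chamber-transitive $G\leq\Aut(\Gamma)$, the subgroup $H$ stabilizing $\{X_0,X_1\}$ has index $2$ (since some $g\in G$ must carry a chamber with $0$-element in $X_0$ to one with $0$-element in $X_1$) and is chamber-transitive on $\BP(\Gamma)$. The remaining point — and the main obstacle — is that $1\to H\to G\to C_2\to 1$ splits: this is immediate when $G$ acts simply transitively, the case relevant to regular hypertopes, since a $g\in G\setminus H$ fixing every non-$\{0,1\}$ element of some chamber and swapping the two endpoints of its edge then has $g^2=1$; for the general chamber-transitive case I would invoke \cite{Leemans2000}.
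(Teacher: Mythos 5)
Your argument is essentially correct, but it takes a genuinely different route from the paper: the paper's proof of Theorem~\ref{thm:BP_fromDimi} is a one-line citation — everything except thinness is taken directly from \cite[Theorem 6.2]{Leemans2000} (since $\BP(\Gamma)(0,1)$ is literally Construction 6.2 there), and thinness is then handled by the same residue-matching argument as in Proposition~\ref{prop:thin}. You instead reprove the statement from scratch by transplanting the Section~\ref{sec:partitionedgeom} machinery (flag extension as in Theorem~\ref{PGammageom}, the residue isomorphisms of Lemmas~\ref{lem:residue0}--\ref{lem:residueNot01}, the connectedness argument of Proposition~\ref{prop:RC}), invoking \cite{Leemans2000} only for the splitting $G = H\rtimes C_2$ in the general chamber-transitive case. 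Your route makes the bipartite case self-contained and exhibits it as an easier instance of the partitioned-geometry analysis (no partitions to track, and your observation that the lifted residue of cotype $\{0,1\}$ is $\tilde{\GG}(P)$ with $P$ the class meeting $X_0$ correctly explains the label $\tilde{\mathcal{S}}(P)$); the paper's route is shorter and defers all the bookkeeping to the reference. Two of your steps are glibber than they should be, though both are repairable: (i) for the last bullet, the claim that $(B_1)$, $(B_2)$ at $\{k,l\}$ transfer because ``the $k$-shadows coincide'' ignores that $(B_2)$ at $\{k,l\}$ in $\Gamma$ also quantifies over $\Gamma$'s $1$-elements (the edges), which have no counterpart in $\BP(\Gamma)$; the direction ``$\BP(\Gamma)$ satisfies $\Rightarrow \Gamma$ satisfies'' needs a short extra argument combining $(B_2)$ at $\{k,l\}$ in $\BP(\Gamma)$ with $(B_2)$ at $\{0,1\}$ in $\Gamma$ to handle an edge $f=\{p,q\}$, via $\sigma_k(e)\subseteq\sigma_k(f)\subseteq\sigma_k(p)\cap\sigma_k(q)$ and back; (ii) for the ``only if'' half of residual connectedness you only sketch the projection of paths, whereas residues of $\Gamma$ containing a $1$-element but no $0$-element still need to be matched with residues of $\BP(\Gamma)$ (or with residues of flags extended by an endpoint), exactly the kind of bookkeeping the cited theorem supplies. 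Also note that your canonicity of the bipartition and the index-$2$ argument implicitly use connectedness of $\Gamma[0,1]$, which should be stated.
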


\begin{proof}
    This is a direct consequence of \cite[Theorem 6.2]{Leemans2000}, except for thinness. Thinness is straightforward, following a similar argument as in Proposition~\ref{prop:thin} and its residues as presented in the proof of \cite[Theorem 6.1]{Leemans2000}.
\end{proof}

\begin{definition}
 Let $\Gamma$ be a geometry of rank $n \geq 3$ over the type set $I = \{0,1,\cdots,n-1\}$ that satisfies $(B_1)$ and $(B_2)$.
 The \emph{halving geometry} $H(\Gamma)(0,1)$ is the geometry $\BP(\Gamma)(0,1)$ if $\Gamma[0,1]$ is bipartite, and $\PP(\Gamma)(0,1)$ otherwise.
\end{definition}

Notice that choosing the pair $\{0,1\}$ in the beginning of Section~\ref{sec:partitionedgeom} was simply for ease of notation. Remark that, for any pair $\{k,l\}\subset I$ that satisfies the obvious corresponding $(B_1)$ and $(B_2)$ conditions, we can build its halving geometry $H(\Gamma)(k,l)$.
Moreover, let $\Gamma$ have two pairs $\{0,1\}$ and $\{k,l\}$ that satisfy $(B_1)$ and $(B_2)$. We will denote $H(H(\Gamma)(0,1))(k,l)$ simply by $H(\Gamma)(0,1)(k,l)$.

\subsection{Non-degenerate leaf hypertopes}\hfill
\\

From now on, let $\Gamma$ be a regular hypertope of rank $n$ and let $G$ be its automorphism group.
We say that $\Gamma$ is a \textit{leaf hypertope} if its diagram $D(\Gamma)$ has at least one leaf (i.e: a vertex of degree $1$ as in Figure~\ref{fig:leafDiagram}).
For $\{i,j\} \subset I$ and $i\neq j$, we say that $\Gamma$ is a \textit{non-degenerate $(i,j)$-leaf hypertope} if $\Gamma$ satisfies $(B_1)$ and $(B_2)$ for the pair $\{i,j\}$.

Furthermore, we will suppose that $\Gamma$ is a non-degenerate leaf hypertope with a chosen leaf $(0,1)$, so that its diagram $D(\Gamma)$ is as shown in Figure \ref{fig:leafDiagram}, where $D(\Gamma)$ restricted to $\{2,3,\cdots,n-1\}$ is any graph.
As before, we will then say that the elements of type $0$ (respectively type $1$) are the vertices (respectively edges) of $\Gamma$.

Let $G$ be the automorphism group of $\Gamma$ and let $C$ be a base chamber of $\Gamma$. We remind the reader that $G = \langle \rho_0, \rho_1, \cdots, \rho_{n-1} \rangle$ where $\rho_i$ is defined to be the unique automorphism sending $C$ to its unique $i$-adjacent chamber $C^i$. The set $S = \{\rho_0, \rho_1, \cdots, \rho_{n-1}\}$ is then called the \textit{standard set of generators} with respect to $C$.
Consider the halving group $H(G) := \langle \tilde{\rho}_0, \rho_1, \rho_2, \ldots, \rho_{n-1}\rangle$, where $\tilde{\rho}_0 := \rho_0\rho_1\rho_0$, and let $\GG$ the $\{0,1\}$-truncation of $\Gamma$, seen, as usual, as a graph.

\begin{prop}\label{bipartite_cond}
    Let $\Gamma$ be a regular, non-degenerate $(0,1)$-leaf hypertope with automorphism group $G$. Let $C$ be a base chamber of $\Gamma$ and let $S = \{\rho_0, \rho_1, \cdots, \rho_{n-1}\}$ be the set of standard generators of $G$ with respect to $C$. Then, the following are equivalent
    \begin{enumerate}
        \item The graph $\GG$ is a bipartite graph.
        \item The halving group $H(G)$ is a proper subgroup of $G$ of index $2$.
        \item Let $G = \langle S \mid R \rangle$ be the standard presentation for $G$. Then every relator $r \in R$ contains an even number of $\rho_0$'s.
        \item The map $\varphi \colon S \to \mathbb{Z}_2$ sending $\rho_0$ to $1$ and $\rho_i$ to $0$, for $i \neq 0$, extends to a homomorphism from $G$ to $\mathbb{Z}_2$.
    \end{enumerate}
\end{prop}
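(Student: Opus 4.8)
The plan is to prove the four statements equivalent by establishing a cycle of implications, using as the central object the ``parity of $\rho_0$'' map. First I would observe the key dictionary between the group and the graph: by flag-transitivity and Proposition~\ref{prop:CosetGeometryfromAction}, $\Gamma$ is the coset geometry $\Gamma(G,(G_i)_{i\in I})$, and the $\{0,1\}$-truncation $\GG$ is the coset geometry on the two types $0,1$. Its vertices of type $0$ are the cosets $G_0 g$, its vertices of type $1$ are the cosets $G_1 g$, and incidence is non-empty intersection; equivalently, a closed walk in $\GG$ starting at the base vertex $G_0$ corresponds to alternately multiplying by elements of $G_1$ and $G_0$, so a walk of even length returning to $G_0$ corresponds precisely to a word in the generators representing an element of $\langle G_0, G_1\rangle$. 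Since $G_0 = \langle \rho_1,\dots,\rho_{n-1}\rangle$ and $G_1 = \langle \rho_0,\rho_2,\dots,\rho_{n-1}\rangle$, and $\rho_0 = \rho_0\rho_1\cdot\rho_1$ while $\rho_0\rho_1\rho_0 \in \langle G_0,G_1\rangle$ only through even occurrences of $\rho_0$, one sees $\langle G_0, G_1\rangle = H(G)$. This is the technical heart of the argument.

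For $(1)\Rightarrow(2)$: if $\GG$ is bipartite with the base vertex $G_0$ on one side, then every closed walk at $G_0$ has even length, hence $\langle G_0,G_1\rangle = H(G)$ omits the vertex $G_1 = G_1\rho_0$ on the other side, so $H(G)$ is a proper subgroup; since $G$ is generated by $H(G)$ together with $\rho_0$ and $\rho_0^2 = id$ with $\rho_0 \notin H(G)$, the index is exactly $2$. For $(2)\Rightarrow(4)$: a subgroup of index $2$ is normal with quotient $\mathbb{Z}_2$, and since $\rho_i \in H(G)$ for $i\neq 0$ (by definition $\rho_1,\dots,\rho_{n-1}$ are among the generators of $H(G)$) while $\rho_0\notin H(G)$, the quotient map sends $\rho_i\mapsto 0$ and $\rho_0\mapsto 1$, which is exactly $\varphi$. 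For $(4)\Rightarrow(3)$: if $\varphi$ extends to a homomorphism, then every relator $r\in R$, being trivial in $G$, maps to $0$ in $\mathbb{Z}_2$; but $\varphi(r)$ is exactly the number of $\rho_0$'s in $r$ mod $2$, so this count is even. For $(3)\Rightarrow(1)$: if every relator has an even number of $\rho_0$'s, then the assignment $\rho_0\mapsto 1$, $\rho_i\mapsto 0$ respects all relations and hence defines a homomorphism $\varphi\colon G\to\mathbb{Z}_2$; then $\varphi$ restricted to $\langle G_0,G_1\rangle = H(G)$ is trivial (as $H(G)$ is generated by $\rho_1,\dots,\rho_{n-1}$ and $\rho_0\rho_1\rho_0$, all killed by $\varphi$), so the two cosets $\{g : \varphi(g)=0\}$ and $\{g:\varphi(g)=1\}$ give a proper $2$-coloring of the vertices of $\GG$: indeed adjacent vertices $G_0 g$ and $G_1 g'$ with common element $h$ satisfy $g \in G_0 h$, $g' \in G_1 h$, and $\varphi$ is constant on $G_0$-cosets and on $G_1$-cosets separately but with the two constants differing by $\varphi(\rho_0)=1$; one must check the coloring is well-defined on vertices (constant on each coset $G_0 g$ resp. $G_1 g$), which holds because $\varphi(G_0)=0$ and $\varphi(G_1)=0$ as groups — wait, $\varphi(G_1) = 0$ since $G_1 = \langle \rho_0,\rho_2,\dots,\rho_{n-1}\rangle$ contains $\rho_0$, so actually $\varphi$ is \emph{not} constant giving a color; the correct statement is that the bipartition classes are the orbits of the two vertex-types under the sign, which I would phrase carefully via: color $G_0 g$ by $\varphi(g)$ and $G_1 g$ by $\varphi(g)+1$, check well-definedness using $\varphi(G_0)=0$ and that $\varphi$ restricted to $G_1$ is $0$ too so this needs the refinement that $\varphi$ is constant on $G_0$-cosets and on $G_1$-cosets, which is immediate, and then adjacency forces opposite colors.

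The main obstacle I anticipate is making the identification $\langle G_0, G_1 \rangle = H(G)$ fully rigorous and, relatedly, getting the bipartite-coloring bookkeeping in $(3)\Rightarrow(1)$ exactly right — one has to be careful that the two color classes are genuinely well-defined functions on the coset-vertices of $\GG$ and that they are exchanged by moving along an edge, rather than conflating the sign on group elements with a sign on cosets. Once the coset-geometry description of $\GG$ is set up cleanly, each implication is short, and I would present the argument as the cycle $(1)\Rightarrow(2)\Rightarrow(4)\Rightarrow(3)\Rightarrow(1)$, invoking flag-transitivity (to realize $\Gamma$ as a coset geometry), the intersection property of the C-group $G$ (to identify the maximal parabolics $G_0,G_1$ with the stated generating sets), and condition $(B_1)$ only implicitly through the fact that $\GG$ is an honest simple graph so that ``walk'' and ``path''-parity arguments make sense.
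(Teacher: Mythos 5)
Your cycle $(2)\Rightarrow(4)\Rightarrow(3)$ is fine (and the index-two-implies-$\varphi$ step is arguably cleaner than the paper's), but the ``technical heart'' of your argument is false: since $\rho_0\in G_1=\langle\rho_0,\rho_2,\dots,\rho_{n-1}\rangle$, you have $\langle G_0,G_1\rangle\supseteq S$, i.e.\ $\langle G_0,G_1\rangle=G$, not $H(G)$. The correct group-theoretic description, which is what the paper uses, is that $H(G)$ is the set of elements of $G$ expressible by a word with an \emph{even number of occurrences of $\rho_0$} (equivalently $H(G)=\langle G_0,\rho_0G_0\rho_0\rangle$); proving this already needs the leaf structure, namely that $\rho_0$ commutes with $\rho_i$ for $i\geq 2$, so that $\rho_0w\rho_0\in H(G)$ for any word $w$ in $\rho_1,\dots,\rho_{n-1}$. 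The same omission infects your $(1)\Rightarrow(2)$ step: from ``$G=\langle H(G),\rho_0\rangle$, $\rho_0^2=id$, $\rho_0\notin H(G)$'' one cannot conclude index $2$ (take $G=\Sym_3$, $H=\langle(12)\rangle$, $\rho_0=(13)$); you need that $\rho_0$ normalizes $H(G)$, which again follows from the commutations $\rho_0\rho_i\rho_0=\rho_i$ ($i\geq2$) and $\rho_0\tilde\rho_0\rho_0=\rho_1$, or equivalently from the even-parity characterization of $H(G)$.

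Your $(3)\Rightarrow(1)$ direction is also not finished: the self-correcting passage about coloring $G_1g$ by $\varphi(g)+1$ never resolves, and indeed that color is not well defined because $\varphi(G_1)=\{0,1\}$. What is actually needed is simpler: the graph $\GG$ has only the $0$-elements as vertices, so it suffices to color the vertex $G_0g$ by $\varphi(g)$ (well defined since $\varphi(G_0)=0$) and to check that the two vertices incident to an edge $G_1h$ are exactly $G_0h$ and $G_0\rho_0h$ (using $G_1=\langle\rho_0\rangle\times G_{01}$ from the leaf condition and $(B_1)$/thinness), whose colors differ by $\varphi(\rho_0)=1$. By contrast, the paper avoids cosets altogether and argues via the chamber action: a word $w$ in the generators traces a path of chambers, the induced vertex sequence in $\GG$ changes vertex exactly at each occurrence of $\rho_0$, so relators correspond to closed walks whose parity is the $\rho_0$-count; this gives $(a)\Leftrightarrow(c)$ directly, and $(b)\Leftrightarrow(c)$ via the even-parity description of $H(G)$. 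Your coset-geometry strategy can be repaired along the lines above, but as written the central identification and two of the four implications have genuine gaps.
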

\begin{proof}
    (c) $\iff$ (d): For the map $\varphi$ to extend to a homomorphism from $G$ to $\mathbb{Z}_2$, it is necessary and sufficient to check that for every relator $r \in R$, we have that $\varphi(r) = 0$. This is clearly the case if and only if every relator contains an even number of occurrences of $\rho_0$. 
    
    (d) $\implies$ (b): The halving group $H(G) = \langle \rho_0\rho_1\rho_0, \rho_1, \cdots, \rho_{n-1} \rangle$ is always either equal to $G$ or a proper subgroup of $G$. Clearly, it is equal to $G$ if and only if $\rho_0 \in H(G)$. Suppose that (d) holds. Then, clearly, $H(G) \subset \ker(\varphi)$, so that $H(G)$ must be an index $2$ subgroup of $G$.

    (a) $\iff$ (c): Every word $w = \prod_{j=1}^m \rho_{k_j}$, where $k_j \in \{0,1,\cdots,n-1\}$ for every $j =1,2,\cdots m$, in the alphabet $S$ corresponds to a path in $\GG$ in the following way. Let $C= \{v,e,F_2, \cdots, F_{n-1}\}$ be the base flag of $\Gamma$ and let $\rho_i$ act on $\CC(\Gamma)$ by sending each chamber $C$ to its $i-$adjacent chamber $C^i$. Then any word $w = \prod_{j=1}^m \rho_{k_j}$ corresponds to a path $C = C_0 - C_1 - \cdots - C_{m-1} - C_m = w(C)$ of chambers where $C_{j}$ is the $k_j$-adjacent chamber to $C_{j-1}$. This gives a sequence of $v_0,v_1, \cdots, v_k$ of vertices where $v_i$ is the $0$-element of $C_i$. We can reduce that sequence to a sequence $v'_0, v'_1, \cdots, v'_j$ where every $v'_i$ is distinct. Remark that each $v'_i$ is adjacent to $v'_{i+1}$ in $\GG$ and that $j$ is equal to the number of occurrences of $\rho_0$ in $w$, since $\rho_0$ is the only generator changing the vertex of the chambers.
    
    Now, if $w$ is a word that represents the identity if $G$, the path we obtain this way must be a cycle.
    Since there are no odd cycles in a bipartite graph, any such word must contain an even number of occurrences of $\rho_0$. Hence every relator $r \in R$ has an even number of occurrences of $\rho_0$'s. 
    
    Conversely, suppose all relators $r\in R$ contain an even number of $\rho_0$'s and, for a contradiction, suppose there is an odd cycle in $\GG$. This would imply that there is a word $w$ with an odd number of $\rho_0$'s that maps to the identity, a contradiction. Hence $\GG$ must be bipartite.

    (b) $\implies$ (c): The halving group $H(G)$ can be seen as the subgroup (not always proper) of $G$ containing all elements $g \in G$ that can be written as a word in $S$ using an even number of $\rho_0$'s. Suppose that $H(G)$ is a proper subgroup of index $2$ of $G$ and, for a contradiction, assume there is a $r\in R$ such that $r$ is a word with odd number of $\rho_0$'s.
    We have that $G$ can be partitioned into the cosets $G/H(G) = \{H(G), H(G)\rho_0\}$ and, hence, an element $g\in H(G)$ cannot be written at the same time with both an even and odd numbers of $\rho_0$'s. However, as $r$ is a word $w\rho_0w'=id$, where both $w,w'$ are words written with an even number of $\rho_0$'s, we have that $\rho_0 = w^{-1}w'^{-1}$, a contradiction. Hence all relators of $R$ are written with an even number of $\rho_0$'s.
\end{proof}

As we will need more flexible notations later, consider a regular hypertope $\Gamma$ of rank $n$ with a non-degenerate $(i,j)$-leaf, with $G=\Aut(\Gamma)=\langle \rho_0,\rho_1,\ldots,\rho_{n-1}\rangle$ and $\{i,j\}\subset \{0,1,\ldots,n-1\}$.
We can define the halving operation $\eta^{(i,j)}$ acting on $G$ as
$$\eta^{(i,j)} : \langle \rho_0,\ldots,\rho_i,\ldots,\rho_j,\ldots,\rho_{n-1}\rangle \rightarrow \langle \rho_0,\ldots,\rho_{i-1},\rho_i\rho_j\rho_i,\rho_{i+1},\ldots,\rho_j,\ldots,\rho_{n-1}\rangle $$ $$=: \langle \rho_0,\ldots,\rho_{i-1},\tilde{\rho_i},\rho_{i+1},\ldots,\rho_j,\ldots,\rho_{n-1}\rangle,$$
 When $(i,j)=(0,1)$, we have that $\eta^{(0,1)} = \eta$, the halving operation introduced by Montero and Weiss~\cite{MonteroWeiss}.
The resulting group after the operation $\eta^{(i,j)}$ will be denoted as $H(G)(i,j)$. For ease of notation, we will denote $H(G)(0,1) = H(G)$ when its implicit that the leaf is $(0,1)$.

We are now ready to prove the main theorem. It states that the coset geometry $(H(G),(H(G)_i)_{i \in I}))$ obtained from the halving group $H(G)$ together with its standard maximal parabolic subgroups is in fact the halving geometry $H(\Gamma)(0,1)$. This shows that it must be a regular hypertope.

 \begin{thm}\label{thm:main2}
    Let $\Gamma = (G,(G_i)_{i \in I})$ be a regular hypertope that satisfies $(B_1)$ and $(B_2)$ at $\{0,1\}$. Then $(H(G),(H(G)_i)_{i \in I}))$ is isomorphic to $H(\Gamma)(0,1)$.
\end{thm}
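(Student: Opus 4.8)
The plan is to build an explicit isomorphism of incidence systems between the coset geometry $(H(G),(H(G)_i)_{i\in I})$ and the halving geometry $H(\Gamma)(0,1)$, by using the chamber of $\Gamma$ fixed by the base flag as a common reference point. First I would fix the base chamber $C = \{v_0,e,F_2,\dots,F_{n-1}\}$ of $\Gamma$, with $e=\{v_0,v_1\}$ by $(B_1)$, and recall that, since $\Gamma$ is a regular hypertope, elements of $\Gamma$ of type $i$ correspond bijectively to cosets $G_i g$, where $G_i=\langle \rho_j\mid j\neq i\rangle$. The key observation is that $H(G)$ acts on $\Gamma$ (it is a subgroup of $G$), and Proposition~\ref{bipartite_cond} governs whether this action is transitive on chambers (non-bipartite case) or has exactly two orbits interchanged by $\rho_0$ (bipartite case); the set $X_0$ of vertices reachable from $v_0$ by even-length paths is exactly the orbit $v_0\cdot H(G)$, as in the path argument of Proposition~\ref{bipartite_cond}. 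So I would split into the two cases matching the definition of $H(\Gamma)(0,1)$ as $\PP(\Gamma)$ or $\BP(\Gamma)$.

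In the non-bipartite case, $H(G)\cong G$ acts faithfully and chamber-transitively on $\Gamma$, and I would exhibit the isomorphism $\PP(\Gamma)(0,1)\cong(H(G),(H(G)_i))$ as follows: the map $\tilde\rho_0=\rho_0\rho_1\rho_0$ sends the base chamber $\Theta=\{(v_0,0),(v_1,1),(F_2,P_{F_2}),\dots\}$ of $\PP(\Gamma)$ to its $0$-adjacent chamber (a direct computation: $\rho_0$ moves $v_0$, $\rho_1$ adjusts the edge, $\rho_0$ again), while $\rho_i$ for $i\neq 0$ sends $\Theta$ to $\Theta^i$; hence by Proposition~\ref{prop:autom} (or the flag-transitivity clause of Theorem~\ref{thm:main1}) $H(G)$ acts flag-transitively on $\PP(\Gamma)$, and faithfully since the action on $\Gamma$ is faithful and $\tilde\rho_0$ genuinely moves vertices. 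A flag-transitive action with trivial chamber stabilizer makes $\PP(\Gamma)$ the coset geometry of $H(G)$ relative to the stabilizers of the base flag's elements; I would then check that the stabilizer in $H(G)$ of the $i$-th base element of $\PP(\Gamma)$ is precisely $H(G)_i=\langle \tilde\rho_0,\rho_1,\dots,\widehat{\rho_i},\dots,\rho_{n-1}\rangle$. For $i\neq 0,1$ this is the stabilizer of $F_i$ in $G$ intersected with $H(G)$; for $i=0$ it is the stabilizer of $(v_0,0)$, which requires the even-length-path description; for $i=1$ it is the stabilizer of $(v_1,1)$, using that $\rho_1$ and $\tilde\rho_0$ both fix $v_1$'s incidence data appropriately — this identification of parabolics is where the conditions $(B_1)$ and $(B_2)$ and the residue lemmas (Lemmas~\ref{lem:residue0}--\ref{lem:residueNot01}) do the real work, since I need the residue of $(v_i,i)$ in $\PP(\Gamma)$ to match the residue structure controlled by $\langle\rho_j\mid j\neq i\rangle$ inside $H(G)$.

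In the bipartite case I would instead invoke Theorem~\ref{thm:BP_fromDimi}: there $\BP(\Gamma)(0,1)$ admits a chamber-transitive group $H$ with $\Aut(\Gamma)=H\rtimes C_2$, and Proposition~\ref{bipartite_cond} identifies this $H$ with $H(G)$, which has index $2$ in $G$; the same coset-geometry recognition argument then applies, with $\tilde\rho_0$ now an honest element of the index-two subgroup acting as the $0$-adjacency map on the bipartite halving geometry (this is the computation $\bar C\rho_0\rho_1\rho_0=\{v_0',e',F_2,\dots\}$ sketched in the commented-out Proposition~\ref{halvgroupisautom}). The main obstacle, in both cases, is the precise matching of the maximal parabolic subgroups $H(G)_i$ with the element stabilizers in the halving geometry: one must show not just that $H(G)_i$ is contained in the relevant stabilizer but that it is the whole stabilizer, which amounts to checking that $H(G)$ satisfies the intersection property with these generators — equivalently, that no extra identifications occur when passing from $\Gamma$ to $H(\Gamma)$. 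Here I would lean on the fact that $H(\Gamma)(0,1)$ has already been proven to be a residually connected, flag-transitive, thin geometry (Theorems~\ref{thm:main1} and~\ref{thm:BP_fromDimi}), so by Proposition~\ref{prop:CosetGeometryfromAction} its automorphism group is a C-group acting simply transitively, and counting chambers (using $|H(G)|=|G|$ or $|G|/2$ as appropriate) forces the coset geometry of $H(G)$ to coincide with it.
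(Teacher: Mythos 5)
Your proposal is correct and follows essentially the same route as the paper's proof: split into the non-bipartite and bipartite cases via Proposition~\ref{bipartite_cond}, verify by direct computation on the base chamber that $\tilde\rho_0=\rho_0\rho_1\rho_0$ is the $0$-adjacency involution and each $\rho_i$ ($i\neq 0$) the $i$-adjacency involution of $H(\Gamma)$, and then conclude from the already-established regular-hypertope structure of $H(\Gamma)$ (Theorems~\ref{thm:main1} and~\ref{thm:BP_fromDimi}, with the index-two identification of $H(G)$ as the type-preserving subgroup in the bipartite case) together with Proposition~\ref{prop:CosetGeometryfromAction}. The parabolic-versus-stabilizer matching you flag as the remaining work is exactly what the paper also delegates to that standard correspondence, so there is no substantive difference.
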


\begin{proof}
    Suppose that $\Gamma[0,1]$ is not bipartite. Then, by Proposition~\ref{bipartite_cond}, $G = H(G)$ and thus, by Theorem~\ref{thm:main1}, $H(G)$ acts simply flag-transitively on $H(\Gamma)$. Let $\Theta = \{(p,0),(q,1),(x_2,P_{x_2}),\cdots,(x_{n-1},P_{x_{n-1}}) \}$ be a chamber of $H(\Gamma)$ and let $\Theta^i, i \in I$, be the unique $i-$adjacent chamber to $\Theta$. We claim that $\Theta^0 =$ $\{(p',0),(q,1),$
    $(x_2,P_{x_2}),\cdots,(x_{n-1},P_{x_{n-1}}) \} = \rho_0 \rho_1 \rho_0 (\Theta)$ and that $\Theta^i = \rho_i(\Theta)$ for $i \neq 0$.
    Indeed, $\rho_0$ exchanges $p$ and $q$ while fixing $\{p,q\}$ and $x_i, i = 2,\cdots n-1$, and $\rho_1$ sends $\{p,q\}$ to $\{p,q'\}$, with $q' * x_i$ and $q'*p$, and thus $\rho_1$ fixes $p$ and sends $q$ to $q'$. Hence $\rho_0\rho_1\rho_0(p) = \rho_0\rho_1(q) = \rho_0(q') = p'$ where $p' * q$ and $p'*x_i, i = 2,\cdots n-1$. Also, $\rho_0\rho_1\rho_0(q) = \rho_0\rho_1(p) = \rho_0(p) = q$. This shows that $\rho_0\rho_1\rho_0(\Theta) = \Theta^0$.
    Similarly, $\rho_1(p) = p$ and $\rho_1(q) = q'$ so that $\rho_1(\Theta) = \Theta^1$.
    The theorem then holds by Proposition~\ref{prop:CosetGeometryfromAction}.

    Suppose instead that $\Gamma[0,1]$ is bipartite. Then, by Proposition~\ref{bipartite_cond}, we have that $|H(G)| = |G|/2$.
    We claim that $H(G)$ is exactly the subgroup of $G$ of elements acting on $H(\Gamma)$ without exchanging types. Indeed, $\rho_0$ exchanges the partition of $\Gamma[0,1]$, since it sends $p$ to $q$. Therefore, $\rho_0$ is a correlation of $H(\Gamma)$ that exchanges the elements of type $0$ and $1$. On the other hand, every $\rho_i$ with $i \neq 0$ act by automorphisms on $H(\Gamma)$. Therefore, any element $g \in H(G)$, having an even number of $\rho_0$ in any word representing it, is an automorphism of $H(\Gamma)$.
    By Theorem~\ref{thm:BP_fromDimi}, we know that the the automorphism group of $H(\Gamma)$ is precisely of index two in $G$, so that it must be $H(G)$. The end of the proof is exactly the same as in the non bipartite case.
\end{proof}

\subsection{On degeneracy conditions}\hfill
\\

In this section, we will explore algebraic counterparts to the conditions $(B_1)$ and $(B_2)$. We also investigate what happens if $\Gamma$ fails one of this conditions.

\begin{lemma}\label{algB1} Let $\Gamma$ be a $\{0,1\}$-leaf regular hypertope with $\Aut(\Gamma)=G$.
   $\Gamma$ satisfies $(B_1)$ if and only if $G_0\cap \rho_0 G_0\rho_0 = G_{0,1}$.
\end{lemma}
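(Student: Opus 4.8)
The plan is to translate the condition $(B_1)$ — that the $\{0,1\}$-truncation of $\Gamma$ is a simple graph — into the language of the coset geometry $\Gamma(G,(G_i)_{i\in I})$. Recall that vertices are cosets of $G_0$, edges are cosets of $G_1$, and an edge $G_1 g$ is incident to a vertex $G_0 h$ exactly when $G_1 g \cap G_0 h \neq \emptyset$. Since $\Gamma$ is a regular hypertope, $\rho_0$ is the unique automorphism interchanging the base chamber with its $0$-adjacent chamber, and in particular the base edge $G_1$ is incident to exactly the two vertices $G_0$ and $G_0\rho_0$ (these are distinct because $\rho_0\notin G_0$, as $\rho_0\in G_0$ would contradict the intersection property of the C-group $G$). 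So the base edge always has (at least) two incident vertices; the content of $(B_1)$ is precisely that it has \emph{no more} than two, i.e. that these two vertices are the \emph{only} ones, equivalently that $G_1$ meets exactly two cosets of $G_0$.

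First I would make this counting explicit. The vertices incident to the base edge $G_1$ are the cosets $G_0 h$ with $G_0 h \cap G_1 \neq \emptyset$, and these are in bijection with the double cosets appearing in the decomposition of $G_1$ into $(G_0\cap G_1)$-cosets inside $G_1$; more usefully, the set of such vertices is acted on transitively by $G_1$ (flag-transitivity of the residue $\Gamma_{G_1}$), and the stabiliser in $G_1$ of the base vertex $G_0$ is $G_0\cap G_1 = G_{0,1}$. Hence the number of vertices incident to $G_1$ equals $[G_1 : G_{0,1}]$. Thinness of $\Gamma$ already forces the residue of a corank-one flag (here the flag consisting of the base edge together with the base $j$-elements for $j\geq 2$) to have size $2$; but $(B_1)$ is the stronger statement that the edge $G_1$ alone — not together with higher elements — is incident to only two vertices. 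I will therefore work directly with the incidence "$G_0 g$ is incident to $G_1$'' and show it is equivalent to $g \in G_0 G_1 \cap G_0\rho_0 G_1$ being a controlled set, and then pivot to the stated algebraic identity.

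The cleanest route to $G_0\cap\rho_0 G_0\rho_0 = G_{0,1}$: the inclusion $G_{0,1}\subseteq G_0\cap\rho_0 G_0\rho_0$ is immediate, since $G_{0,1}=\langle\rho_j : j\neq 0,1\rangle$ commutes with $\rho_0$ by the string-type relations on the leaf (the leaf $(0,1)$ means $(\rho_0\rho_j)^2=1$ for all $j\geq 2$, so $\rho_0 G_{0,1}\rho_0 = G_{0,1}$), and $G_{0,1}\subseteq G_0$ always. For the reverse inclusion, I would argue: an element $g\in G_0\cap\rho_0 G_0\rho_0$ fixes the base vertex $G_0$ and also fixes the vertex $G_0\rho_0$ (since $\rho_0 g\rho_0\in G_0$ means $g\in\rho_0 G_0\rho_0$, i.e. $g$ stabilises $G_0\rho_0$). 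Thus $g$ stabilises both vertices of the base edge. Under $(B_1)$, an automorphism fixing both endpoints of an edge fixes the edge itself (a simple graph has no two edges on the same pair of vertices, and the edge is determined by its vertex pair), so $g\in G_1$, whence $g\in G_0\cap G_1 = G_{0,1}$. Conversely, if $G_0\cap\rho_0 G_0\rho_0=G_{0,1}$, then any edge $G_1 x$ on both $G_0$ and $G_0\rho_0$ satisfies: its stabiliser in the pointwise stabiliser of $\{G_0, G_0\rho_0\}$ — which is exactly $G_0\cap\rho_0 G_0\rho_0 = G_{0,1}\subseteq G_1$ — fixes $G_1 x$; combined with flag-transitivity this forces $G_1 x = G_1$, giving uniqueness of the edge on a given vertex pair, i.e. simplicity of the graph.

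\textbf{Main obstacle.} The delicate point is the last equivalence in both directions: passing between "an automorphism fixing both vertices of the base edge must fix the base edge'' and the set-theoretic identity of subgroups, without circularity. The safe way is to phrase everything via the action of $G$ on cosets: the pointwise stabiliser of the pair of vertices $\{G_0, G_0\rho_0\}$ is literally $G_0\cap\rho_0 G_0\rho_0$, and $(B_1)$ says precisely that among the cosets $G_1 x$ incident to both these vertices there is only one, namely $G_1$ itself; since the subgroup $G_0\cap\rho_0 G_0\rho_0$ permutes this set of common edges and $G_{0,1}\subseteq G_1$ always fixes $G_1$, uniqueness of the common edge is equivalent to $G_0\cap\rho_0 G_0\rho_0$ being contained in — hence equal to — $\mathrm{Stab}_{G}(G_1)\cap G_0 = G_1\cap G_0 = G_{0,1}$. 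I would spell this dictionary out carefully, as it is where a sign-error-type slip is most likely, and it is the only place the full strength of flag-transitivity and the intersection property are used.
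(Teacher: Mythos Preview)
Your argument is correct and follows essentially the same route as the paper: identify $G_0\cap\rho_0 G_0\rho_0$ as the pointwise stabiliser of the vertex pair $\{G_0,G_0\rho_0\}$, observe $G_{0,1}\subseteq G_0\cap\rho_0 G_0\rho_0$ via the leaf commutation relations, and then use that under $(B_1)$ an automorphism fixing both endpoints must fix the (unique) edge between them, giving the reverse inclusion --- the paper phrases the forward direction contrapositively by exhibiting a second edge $G_1x$ on the same vertex pair, but this is the same idea. One small wrinkle worth tightening in your converse: to conclude that $G_0\cap\rho_0 G_0\rho_0=G_{0,1}$ forces uniqueness of the edge, you implicitly need that this stabiliser acts \emph{transitively} on the edges through both vertices, which follows once you note (from the leaf structure, residual connectedness and thinness) that every edge has exactly two vertices, so any $g\in G_0$ carrying $G_1$ to another common edge $G_1x$ must also fix $G_0\rho_0$ and hence lie in $\rho_0 G_0\rho_0$ --- the paper glosses this with ``by regularity of $\Gamma$''.
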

\begin{proof}

    Suppose first $\Gamma$ satisfies $(B_1)$. Hence, any $1$-element $e$ of $\Gamma$ can be identified with a pair of $0$-elements $\{p, q\}$ incident with $e$.
    Since $\Gamma$ is a regular hypertope, it is isomorphic to the coset geometry $\Gamma(G, (G_i)_{i \in I})$ where $I = \{ 0,1,\cdots,n-1)$, with $G = \Aut(\Gamma)$.
    It is obvious that $G_{0,1}\subset G_0$ and $G_{0,1}\subset \rho_0 G_0\rho_0$.
    Suppose, for a contradiction, that $G_{0,1} \subsetneqq G_0\cap \rho_0 G_0\rho_0$. Hence, there exists $x\in G_0\cap \rho_0 G_0\rho_0$ such that $x \notin G_{0,1}$. As $x\in G_0$, we must have $x\notin G_1$, meaning that $G_1x \neq G_1$. In $\Gamma(G, (G_i)_{i \in I})$, we have that $G_0\cap G_1 \neq \emptyset$ and $G_0\rho_0 \cap G_1 \neq \emptyset$. Moreover, $x\in G_0\cap G_1x$ and $x \in \rho_0G_0\rho_0\cap G_1x\subseteq \rho_0(G_0\rho_0 \cap G_1x)$. This means that $G_0\cap G_1x \neq \emptyset$ and $G_0\rho_0 \cap G_1x \neq \emptyset$. Hence, there are exactly two edges $G_1$ and $G_1x$ identified by the same pair of vertices, a contradiction. Hence $G_0\cap \rho_0 G_0\rho_0 = G_{0,1}$.

    Assume now that $G_0\cap \rho_0 G_0\rho_0 = G_{0,1}$. Let $C$ be the base chamber of $\Gamma$, and $C^0$ be the adjacent base chamber, changing only the type 0 element. The orbit of $G_0$ acting on the the base chamber gives a set of chambers fixing the type 0 element. Moreover, $\rho_0G_0\rho_0$ acting on the chamber $C^0$ fixes the type 0 element also.
    Hence, the subgroup $G_0\cap \rho_0 G_0\rho_0$ will have to fix both type 0-elements of chambers $C$ and $C^0$. As $G_0\cap \rho_0 G_0\rho_0 = G_{0,1}$, consequently fixing both these type 0 elements fixes also a type 1 element incident to both. Then, by regularity of $\Gamma$, we conclude that in the $\{0,1\}$-truncation of $\Gamma$, every pair of adjacent vertices define one and only one edge, satisfying $(B_1)$.
\end{proof}
We now take a look at $(B_2)$.

\begin{lemma}\label{algB2} Let $\Gamma$ be a $(0,1)$-leaf regular hypertope over $I$, with $\Aut(\Gamma)=G$, and such that $\Gamma)$ satisfies $(B_1)$.
If $G_0\rho_0\cap wG_0 = \emptyset$, for all $i\in I\backslash\{0,1\}$ and $w\in G_i$ such that $wG_0 \neq \rho_0G_0$, then $\Gamma$ satisfies $(B_2)$.
\end{lemma}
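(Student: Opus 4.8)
The plan is to pass to the coset-geometric model $\Gamma\cong\Gamma(G,(G_i)_{i\in I})$ and reduce everything to the base edge $e=G_1$ by flag-transitivity. Since $(B_2)$ is a biconditional, I would first dispose of the forward direction, which is immediate from the leaf shape of $D(\Gamma)$: in the residue $\Gamma_e$ of the $1$-element $e$ the node $0$ is isolated, so every residue of type $\{0,i\}$ with $i\neq 0,1$ in $\Gamma_e$ is a generalized digon; hence any $i$-element incident to $e$ is automatically incident to every $0$-element incident to $e$, i.e. $e*x\Rightarrow\sigma_0(e)\subseteq\sigma_0(x)$. This uses no extra hypothesis, so the content of the lemma is the converse $\sigma_0(e)\subseteq\sigma_0(x)\Rightarrow e*x$, and that is where the condition $G_0\rho_0\cap wG_0=\emptyset$ enters.

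For the converse, recall $\rho_0\in G_j$ for every $j\neq 0$ while $\rho_0\notin G_0$, so $\mathrm{id}\in G_0\cap G_1$ and $\rho_0\in(G_0\rho_0)\cap G_1$; together with $(B_1)$ this gives $\sigma_0(G_1)=\{G_0,G_0\rho_0\}$. Let $x$ be an $i$-element, $i\neq 0,1$, with $\{G_0,G_0\rho_0\}\subseteq\sigma_0(x)$; the goal is $G_1*x$. Incidence of $x$ with $G_0$ lets me pick a representative so that $x=G_ih$ with $h\in G_0$. Incidence of $x$ with $G_0\rho_0$ says $G_0\rho_0\cap G_ih\neq\emptyset$, hence $G_0\rho_0h^{-1}\cap G_i\neq\emptyset$; choosing $u$ in this set and using $h\in G_0$ puts $u\in G_i\cap G_0\rho_0G_0$. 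The hypothesis, read in contrapositive form, is exactly $G_i\cap G_0\rho_0G_0\subseteq\rho_0G_0$, so $u\in\rho_0G_0$.

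Now I would finish with a short rearrangement. Writing $u=\rho_0c=d\rho_0h^{-1}$ with $c,d\in G_0$ and using $\rho_0^2=\mathrm{id}$ yields $\rho_0d\rho_0=ch\in G_0$, so $d\in G_0\cap\rho_0G_0\rho_0$, which equals $G_{0,1}$ by Lemma~\ref{algB1} — this is the one place where $(B_1)$ is actually needed. Then from $u=d\rho_0h^{-1}\in G_i$ one gets $h=u^{-1}(d\rho_0)$, and since $u^{-1}\in G_i$ and $d\rho_0\in G_1$ (as $d\in G_{0,1}\subseteq G_1$ and $\rho_0\in G_1$), this shows $h\in G_iG_1$, i.e. $G_1\cap G_ih\neq\emptyset$, i.e. $e*x$. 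Finally, any $1$-element of $\Gamma$ maps to $G_1$ under a suitable element of $G$, and both $\sigma_0(\cdot)$ and the incidence relation are $G$-equivariant, so the base-edge case gives $(B_2)$ for all edges.

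The main obstacle I expect is purely bookkeeping with the sidedness of cosets: the hypothesis mixes the right coset $G_0\rho_0$ with the left cosets $wG_0$ and $\rho_0G_0$, and one must check carefully that replacing $x$ by $G_ih$ with $h\in G_0$ is legitimate and that the manipulations stay on the correct side, so that the reformulation $G_i\cap G_0\rho_0G_0\subseteq\rho_0G_0$ of the hypothesis genuinely applies. Once that reformulation and Lemma~\ref{algB1} are in hand, the rest is routine.
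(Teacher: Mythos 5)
Your argument is correct, and it reaches the conclusion by a somewhat different route than the paper. The paper proves the converse direction of $(B_2)$ by contradiction, working with the action on chambers: assuming $\sigma_0(e)\subseteq\sigma_0(x)$ but $e$ not incident to $x$, it picks $w\in G_i$ taking the base chamber to one containing $q$ and $x$, splits into the cases $G_0w=G_0\rho_0$ (which produces a double edge, contradicting $(B_1)$ directly) and $G_0w\neq G_0\rho_0$ (where adjacency of $p$ and $q$ forces a common chamber between the chamber sets $\CC_{G_0w}$ and $\CC_{\rho_0G_0}$, i.e.\ $G_0\rho_0\cap wG_0\neq\emptyset$, contradicting the hypothesis). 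You instead give a direct proof inside the coset geometry: after reducing to the base edge $G_1$ by flag-transitivity, you reformulate the hypothesis as $G_i\cap G_0\rho_0G_0\subseteq\rho_0G_0$, and then a short double-coset computation, combined with Lemma~\ref{algB1} to place $d$ in $G_{0,1}$, exhibits the explicit element $uh=d\rho_0\in G_1\cap G_ih$ witnessing $e*x$. The paper's proof never invokes Lemma~\ref{algB1}; your use of it is exactly where $(B_1)$ enters, and it lets you avoid both the case split and the proof by contradiction. What your version buys is a cleaner handling of the left/right coset bookkeeping (the paper's proof slides between $G_0w$ and $wG_0$, whereas your contrapositive reformulation is faithful to the statement as written) and a constructive witness for the incidence; what the paper's version buys is a more geometric picture in terms of chambers and adjacency in the $\{0,1\}$-truncation. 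Your treatment of the forward implication (isolated node $0$ in the residue of $e$, digon residues plus residual connectedness) coincides with the paper's, which cites the corresponding lemma of Leemans et al.
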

\begin{proof}
    Let $\Gamma$ be a $(0,1)$-leaf regular hypertope that satisfies $(B_1)$ and set $\Aut(\Gamma)=G$.
    The forward implication of condition $(B_2)$ is a direct consequence of the fact that $\Gamma$ is residually connected and that its rank two $\{0,i\}$ residues (with $i\in I\backslash\{0,1\}$) are digons (see \cite[Lemma 4.4]{Leemans2000}, for example).

    Consider a type $1$-element $e$ which, by $(B_1)$, can be identified by two $0$-elements $p$ and $q$. Let $x$ be an $i$-element, for $i\in I\backslash\{0,1\}$, such that $\{p,q\}\subseteq \sigma_0(x)$ and suppose that $e$ is not incident with $x$ (i.e. failing $(B_2)$).
    Let $C$ to be the base chamber of $\Gamma$, with $\{p,x\}\subset C$, and let $g(C)$ be the result of the action of $g\in G$ on the base chamber $C$. 
    
    As $q*x$, there is an element $w$ in $G_i$ (the stabilizer of $x$), such that $q\in w(C)$.
    Notice that if $G_0w = G_0\rho_0$, then $q$ is at distance one from $p$ in the $\{0,1\}$-truncation of $\Gamma_x$. This would mean that there is an edge $e'=\{p,q\}$, different from $e$, incident to $x$, which contradicts $(B_1)$. 
    
    Therefore, consider $w\in G_i$ such that $G_0w \neq G_0\rho_0$. Consider the set of all chambers where we fix both $q$ and $x$ as its $0$ and $i$-elements. This set can be obtained by acting on the base chamber with elements from $G_0w$. In other words, we look at the set $\CC_{G_0w} =\{m(C)\ |\ m\in G_0w\}$. 
    We know that there exists an $1$-element $e=\{p,q\}$, hence $q$ must also be adjacent to $p$ in the $\{0,1\}$-truncation of $\Gamma$, seen as a graph. 
    Consider then the set of all chambers in which we fix the $0$-element as $p$, the set $\CC_{G_0} =\{k(C)\ |\ k\in G_0\}$. If we change the $0$-element to all the chambers of $\CC_{G_0}$, we get a new set of chambers $\CC_{\rho_0G_0} =\{k(C)\ |\ k\in \rho_0G_0\}$.
     Since $p$ and $q$ are adjacent in the $\{0,1\}$-truncation of $\Gamma$, there is a common chamber between $\CC_{G_0w}$ and $\CC_{\rho_0G_0}$, i.e. the intersection $\CC_{G_0w}\cap \CC_{\rho_0G_0}$ is non-empty. This is equivalent to saying that $G_0\rho_0\cap wG_0 \neq \emptyset$.
\end{proof}

One important consequence of imposing condition $(B_1)$ is to guarantee that the halving group is a C-group as, otherwise, $H(G)$ will never be a C-group.

\begin{prop}
    Let $\Gamma$ be a regular $(0,1)$-leaf hypertope that does not satisfy condition $(B_1)$ for the chosen leaf. Then $H(G)(0,1)$ is not a C-group, where $G=\Aut(\Gamma)$.
\end{prop}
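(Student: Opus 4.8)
The plan is to argue by contradiction: assume that $H(G)(0,1) = \langle \tilde\rho_0, \rho_1, \dots, \rho_{n-1}\rangle$, with $\tilde\rho_0 := \rho_0\rho_1\rho_0$, is a C-group with respect to this (implicit) generating set — all of whose elements are involutions, $\tilde\rho_0$ being a conjugate of $\rho_1$ — and derive that $\Gamma$ must then satisfy $(B_1)$, a contradiction. The only instance of the intersection property I would invoke is the one for the index sets $\{1,2,\dots,n-1\}$ and $\{0,2,\dots,n-1\}$: for a C-group this forces $H(G)_0 \cap H(G)_1 = H(G)_{0,1}$, where $H(G)_0 = \langle \rho_1,\dots,\rho_{n-1}\rangle$, $H(G)_1 = \langle \tilde\rho_0, \rho_2,\dots,\rho_{n-1}\rangle$, and $H(G)_{0,1} = \langle \rho_2,\dots,\rho_{n-1}\rangle$.

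First I would rewrite these parabolics inside $G$. Clearly $H(G)_0 = G_0$. Since $\Gamma$ is a $(0,1)$-leaf hypertope, the node $0$ is adjacent only to $1$ in $D(\Gamma)$, so $\rho_0$ commutes with each of $\rho_2,\dots,\rho_{n-1}$; conjugating $G_0 = \langle \rho_1,\dots,\rho_{n-1}\rangle$ by $\rho_0$ thus yields exactly $\langle \rho_0\rho_1\rho_0,\rho_2,\dots,\rho_{n-1}\rangle = H(G)_1$. Hence $H(G)_0 \cap H(G)_1 = G_0 \cap \rho_0 G_0 \rho_0$. The same commutation relations show that $\rho_0$ centralizes $G_{0,1} = \langle \rho_2,\dots,\rho_{n-1}\rangle$, so $G_{0,1}$ lies in $G_0 \cap \rho_0 G_0\rho_0$; that is, $H(G)_{0,1} \subseteq H(G)_0 \cap H(G)_1$ holds unconditionally.

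Putting this together, if $H(G)(0,1)$ were a C-group the intersection property would force $G_0 \cap \rho_0 G_0\rho_0 = G_{0,1}$, which by Lemma~\ref{algB1} is equivalent to $\Gamma$ satisfying $(B_1)$ — contrary to hypothesis. Therefore $H(G)(0,1)$ is not a C-group. I expect the argument to be short; the one point requiring care is the identification $\rho_0 G_0\rho_0 = H(G)_1$, which is exactly where the leaf hypothesis is used and which makes Lemma~\ref{algB1} directly applicable, together with keeping the re-indexing of the halving generators straight so that the correct instance of the intersection property is the one being tested.
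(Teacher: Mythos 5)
Your argument is correct and is essentially the paper's proof: both identify $H(G)_0=G_0$, $H(G)_1=\rho_0 G_0\rho_0$, $H(G)_{0,1}=G_{0,1}$ and conclude via Lemma~\ref{algB1} that the intersection property fails for the pair $\{0,1\}$. Your explicit use of the leaf condition (commutation of $\rho_0$ with $\rho_2,\dots,\rho_{n-1}$) to justify $\rho_0 G_0\rho_0=H(G)_1$ only makes explicit a step the paper leaves implicit.
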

\begin{proof}
    Suppose $\Gamma$ is a regular leaf-hypertope that does not satisfies condition $(B_1)$ for a $(0,1)$-leaf.
Hence, by Lemma~\ref{algB1}, $G_0\cap \rho_0 G_0 \rho_0 \neq G_{0,1}$. As $G_{0,1}\subset G_0$ and $G_{0,1}\subset \rho_0 G_0\rho_0$ then there exists $x\in G_0\cap \rho_0 G_0 \rho_0$ such that $x\notin G_{0,1}$.
Consider then the halving group $H(G)(0,1) = H(G) = \langle \rho_0\rho_1\rho_0, \rho_1, \rho_2, \ldots \rho_{n-1}\rangle$, where $G = \Aut(\Gamma)$.
Hence we have the maximal parabolic subgroups $H(G)_0 = \langle \rho_1, \rho_2, \ldots\rho_{n-1}\rangle$ and $H(G)_1 = \langle \rho_0\rho_1\rho_0, \rho_2, \ldots \rho_{n-1}\rangle$.

As $H(G)_0 = G_0$, $H(G)_1= \rho_0 G_0 \rho_0$ and $H(G)_{0,1}=G_{0,1}$, we have $H(G)_0\cap H(G)_1 \neq H(G)_{0,1}$, failing the intersection property. Hence, $H(G)$ is not a C-group.
\end{proof}

Conditions $(B_1)$ and $(B_2)$ are weaker conditions to impose on a regular polytope $\Gamma$ than requiring its poset to be a lattice. Example~\ref{notlatticehyper} shows that it is possible for a regular polytope to have a face poset which is not a lattice but to satisfy conditions $(B_1)$ and $(B_2)$.

\begin{example}\label{notlatticehyper}
    Let $\Gamma$ be the regular polytope with Schl\"{a}fli type $\{3,3,2\}$. The presence of a $2$ in the Schl\"{a}fli type immediately guarantees that the face poset of the $\Gamma$ is not a lattice. Nonetheless, it is easily verifiable that $\Gamma$ satisfies conditions $(B_1)$ and $(B_2)$ for its $(0,1)$-leaf. Hence $H(\Gamma)$ gives a regular hypertope, even though its face poset is not a lattice.
\end{example}

Finally, there exists a case where condition $(B_2)$ is not satisfied and yet the coset geometry of the halving group gives a regular hypertope (see Example~\ref{example:hemicube}).

\begin{example}\label{example:hemicube}
    Let $\Gamma$ be the hemicube and $G=\Aut(\Gamma)$. Its $(2,1)$-leaf fails condition $(B_1)$ but the $(0,1)$-leaf satisfies $(B_1)$. Moreover, it can be easily seen that there exists two vertices $p,q$ of $\Gamma$ such that both are incident to a face $x$ but the edge $e=\{p,q\}$ is not incident to $x$, failing condition $(B_2)$. Nevertheless, the coset geometry of the halving group $H(G)(0,1)$, $\Gamma(H(G)(0,1),(H(G)(0,1)_i)_{i\in I})$, gives a regular polytope still (the tetrahedron).
\end{example}

These examples illustrate interesting implications of these properties. As we proved, $(B_1)$ is essential to guarantee that the halving group is still a C-group.
Condition $(B_2)$ is essential in our proof of Theorem~\ref{PGammageom}, showing that the halving geometry is in fact a geometry.
However Example~\ref{example:hemicube} shows that condition $(B_2)$ is not strictly necessary. In fact, the hemicube also fails the lattice condition required in the construction of Montero and Weiss~\cite{MonteroWeiss}. Nevertheless, this example seems to be a sporadic case where condition $(B_2)$ is not met but the halving geometry is still a regular hypertope.

\section{Regular Hypertopes from Cubic toroids}\label{sec:cubictoroid}

The cubic toroids have been used frequently as a starting point for building families of hypertopes \cite{piedade2023infinite,ens2018rank,MonteroWeiss}.
Here, we will use this family of regular polytopes as a base to apply our halving constructions. In Section~\ref{subsec:cubtor} we introduce the cubic toroids, discuss the non-degeneracy of their leafs and build the halving geometry $H(\Gamma)(0,1)$.
In Section~\ref{subsec:halvgroup} we give the group presentations for the automorphism groups of $H(\Gamma)(0,1)$ and $H(\Gamma)(0,1)(n,n-1)$. Finally, in Section~\ref{subsec:diagrams} we successively apply the halving geometry to non-degenerate leafs of the cubic toroids and give all the possible diagrams obtained by these operations.

\subsection{$(n+1)$-Cubic toroids}\label{subsec:cubtor}\hfill
\\

Consider the affine Coxeter group $[4,3^{n-2},4]$, where $q^{k}$, for any interger $q$, represents a string $q,\ldots,q$ of length $k$. This is the automorphism group of the tessellation $\mathcal{T} = \{4,3^{n-2},4\}$ of $\mathbb{E}^n$ by hypercubes, with vertex set $\mathbb{Z}^n$. Let $T$ be the translation subgroup of $[4,3^{n-2},4]$. Concretely, $T = \langle e_1, e_2, \ldots, e_n \rangle$, where $e_i$ is the $i$'th standard basis vector of $\mathbb{E}^n$ for $i\in\{1,\ldots,n\}$. 

Let $\Lambda_{\textbf{s}}$ be the subgroup of $T$ generated by a vector $\textbf{s}$ and all its conjugates under $[4,3^{n-2},4]$. We will define the $(n+1)$-regular polytope $\{4,3^{n-2},4\}_{\textbf{s}}$ as the one whose automorphism group is the string C-group $[4,3^{n-2},4]_{\textbf{s}} := [4,3^{n-2},4]/\Lambda_{\textbf{s}}$, where the vector $\textbf{s}\in\{(s,0^{n-1}), (s,s,0^{n-2}), (s^n)\}$.

\begin{thm}\label{thm:cubicPoly}\cite[Theorem 6D4]{ARP}
Let $n\geq 3$ and $s\geq 2$, with ${\bf s}=(s^k,0^{n-k})$ and $k\in\{1,2,n\}$. There is a self-dual regular polytope $\{4,3^{n-2},4\}_{\bf{s}}$ whose automorphism group is the Coxeter group $[4,3^{n-2},4] = \langle \rho_0,\ldots,\rho_n\rangle$ factored by the single extra relation $$(\rho_0\rho_1\ldots\rho_{n-1}\rho_n\rho_{n-1}\rho_{n-2}\ldots\rho_{k+1}\rho_k)^{ks} = id.$$ 
\end{thm}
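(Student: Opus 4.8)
The plan is to realise $[4,3^{n-2},4]$ concretely as the group of symmetries of the cubic tessellation $\mathcal{T}=\{4,3^{n-2},4\}$ of $\mathbb{E}^n$, with $\rho_0,\ldots,\rho_n$ the reflections in the walls of a fixed chamber and $T=\langle e_1,\ldots,e_n\rangle\cong\mathbb{Z}^n$ the translation subgroup, so that $[4,3^{n-2},4]=W_0\ltimes T$ with point group $W_0=\langle\rho_1,\ldots,\rho_n\rangle=[4,3^{n-2}]$ the full group of signed permutations of the $e_i$; the polytope $\{4,3^{n-2},4\}_{\mathbf s}$ to be produced is then the quotient tessellation $\mathcal{T}/\Lambda_{\mathbf s}$ of the flat torus $\mathbb{E}^n/\Lambda_{\mathbf s}$. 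First I would record that $\Lambda_{\mathbf s}$ is normal in $[4,3^{n-2},4]$ (it is by definition the subgroup generated by a $W_0$-orbit of $T$) and that, in each of the three admissible cases, it is a sublattice of $T$ of finite index contained in $s\mathbb{Z}^n$: explicitly $\Lambda_{(s,0^{n-1})}=s\mathbb{Z}^n$, $\Lambda_{(s,s,0^{n-2})}=\{x\in s\mathbb{Z}^n:\tfrac1s\sum_i x_i\text{ is even}\}$, and $\Lambda_{(s^n)}=\langle(s,\ldots,s),2se_1,\ldots,2se_n\rangle$. In particular $\Gamma_{\mathbf s}:=[4,3^{n-2},4]/\Lambda_{\mathbf s}$ is finite.

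The main step is to prove that $\Gamma_{\mathbf s}$, together with the images $\bar\rho_0,\ldots,\bar\rho_n$ of the standard generators, is a string C-group. By the one-to-one correspondence between string C-groups and abstract regular polytopes recalled in the previous section, this immediately identifies $\Gamma_{\mathbf s}$ as the automorphism group of a regular $(n+1)$-polytope, which is $\{4,3^{n-2},4\}_{\mathbf s}$; the relations $(\bar\rho_{i-1}\bar\rho_i)^{p_i}=\mathrm{id}$ with $(p_1,\ldots,p_n)=(4,3,\ldots,3,4)$ are clearly inherited from the Coxeter presentation, so everything rests on the intersection property. Here I would use that $[4,3^{n-2},4]$ is an irreducible affine Coxeter group, whence every proper standard parabolic $W_I$ ($I\subsetneq\{0,\ldots,n\}$) is finite; since $\Lambda_{\mathbf s}\le T$ is torsion-free, $\Lambda_{\mathbf s}\cap gW_Ig^{-1}=\{1\}$ for every such $I$ and every $g\in[4,3^{n-2},4]$, so the quotient map is injective on every proper parabolic and on all its conjugates. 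The usual argument then gives $\bar W_I\cap\bar W_J=\bar W_{I\cap J}$ whenever $I\cup J\neq\{0,\ldots,n\}$ (for then, writing $\pi$ for the quotient map, any $a\in W_I$, $b\in W_J$ with $\pi(a)=\pi(b)$ satisfy $ab^{-1}\in\Lambda_{\mathbf s}\cap W_{I\cup J}=\{1\}$), and the whole difficulty lies in the remaining case, where $W_I$ and $W_J$ are stabilisers of incident faces of $\mathcal T$ whose types together cover $\{0,\ldots,n\}$ -- say a cube $c$ and a vertex $v$ of $c$. One must then rule out a nontrivial $\lambda\in\Lambda_{\mathbf s}$ with $\lambda=ab^{-1}$, $a\in W_I$, $b\in W_J$; since such a $\lambda$ would carry $v$ to a vertex of $c$, it would have to lie in $\{-1,0,1\}^n$, forcing $\lambda=0$ because every nonzero vector of $\Lambda_{\mathbf s}\subseteq s\mathbb{Z}^n$ has a coordinate of absolute value at least $s\ge 2$. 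I expect this reduction of the intersection property to a separation statement about translated faces of $\mathcal T$, together with its verification against the three explicit lattices, to be the main obstacle; it is exactly the point where the hypothesis $s\ge 2$ is used.

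Next I would match the single relation. Writing $\sigma_k:=\rho_0\rho_1\cdots\rho_{n-1}\rho_n\rho_{n-1}\cdots\rho_{k+1}\rho_k$ for the word in the statement (so $\sigma_n=\rho_0\rho_1\cdots\rho_n$), a direct computation of its action on $\mathbb{E}^n$ shows that $\sigma_k$ is a screw motion whose $k$-th power is a pure translation, with $\sigma_k^{\,ks}$ equal to the translation $t_{\mathbf s}$ by $\mathbf s$. Since the normal closure of $\langle t_{\mathbf s}\rangle$ in $[4,3^{n-2},4]$ is generated by the translations through the $W_0$-orbit of $\mathbf s$, it is exactly $\Lambda_{\mathbf s}$; hence adjoining the single relation $\sigma_k^{\,ks}=\mathrm{id}$ to the Coxeter presentation of $[4,3^{n-2},4]$ collapses it to precisely $\Gamma_{\mathbf s}$, which is the presentation asserted. (The restriction $k\in\{1,2,n\}$ is exactly what makes $\Lambda_{\mathbf s}$ recoverable as the normal closure of one such word.)

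Finally, for self-duality I would use the reflective symmetry $\rho_i\mapsto\rho_{n-i}$ of the Coxeter diagram of $[4,3^{n-2},4]$: it induces an automorphism $\delta$ of $[4,3^{n-2},4]$ that preserves $T$ and carries the $W_0$-orbit of $\mathbf s$ into itself, so $\delta(\Lambda_{\mathbf s})=\Lambda_{\mathbf s}$ and $\delta$ descends to an automorphism of $\Gamma_{\mathbf s}$ reversing the order of the generators -- that is, a duality of $\{4,3^{n-2},4\}_{\mathbf s}$, which is therefore self-dual.
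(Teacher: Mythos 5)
The paper itself offers no proof of this statement: it is imported verbatim from McMullen and Schulte \cite[Theorem 6D4]{ARP}, so there is no in-paper argument to compare yours against. Your sketch is, in substance, the classical proof of that result and its main lines are sound: realize $[4,3^{n-2},4]$ as $W_0\ltimes T$ acting on the cubic tessellation, note that $\Lambda_{\mathbf{s}}$ is a $W_0$-invariant (hence normal) finite-index sublattice of $T$ contained in $s\mathbb{Z}^n$, verify the string C-group property for the quotient, recover $\Lambda_{\mathbf{s}}$ as the normal closure of the single relator because $\sigma_k^{\,k}$ is the translation by $(1^k,0^{n-k})$, and obtain self-duality from the diagram symmetry $\rho_i\mapsto\rho_{n-i}$. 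Where \cite{ARP} reaches the conclusion through its general theory of quotients of regular tessellations by sparse translation subgroups, you check the intersection property directly; both routes turn on the same point, namely that every nonzero vector of $\Lambda_{\mathbf{s}}\subseteq s\mathbb{Z}^n$ has a coordinate of absolute value at least $s\geq 2$, which is exactly where the excluded case $\mathbf{s}=(2,0^{n-1})$-type degenerations would enter.

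Two steps in your outline need to be completed rather than repaired. First, the intersection property must be established for every pair $I,J$ with $I\cup J=\{0,\ldots,n\}$, not only for the vertex/cube pair; the same short-vector argument does extend: choosing $i_0\notin I$ and $j_0\notin J$, the elements $a\in W_I$ and $b\in W_J$ fix the base faces $F_{i_0}$ and $F_{j_0}$ setwise, so the putative translation $\lambda=ab^{-1}$ carries the centre of a face of the closed base cube to another point of that cube, forcing all coordinates of $\lambda$ into $[-1,1]$ and hence $\lambda=0$ when $s\geq 2$. Second, saying the relations $(\bar\rho_{i-1}\bar\rho_i)^{p_i}=\mathrm{id}$ are ``inherited'' only gives that the orders divide $4,3,\ldots,3,4$; that they are exactly these values (so the quotient really is of type $\{4,3^{n-2},4\}$) follows from the injectivity of the quotient map on the finite rank-two parabolics, which your torsion-freeness observation already provides. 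Finally, in the self-duality step the assertion $\delta(T)=T$ deserves a word (for instance, $T$ is the set of elements with finite conjugacy class, hence characteristic), after which $\delta$ acts on $T$ through the normalizer of the point group and therefore maps the $W_0$-orbit of $\mathbf{s}$, and so $\Lambda_{\mathbf{s}}$, to itself.
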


In other words, the polytopes $\{4,3^{n-2},4\}_{\textbf{s}}$ are obtained by quotienting $\mathcal{T}$ by $\Lambda_{\textbf{s}}$. We make explicit here what $\Lambda_{\textbf{s}}$ is for each value of $\textbf{s}$.
For $\textbf{s}=(s,0^{n-1})$, the group $\Lambda_{\textbf{s}}$ is generated by $\{s e_1,s e_2, \ldots,s e_n\}$. For $\textbf{s}=(s,s,0^{n-2})$, the group $\Lambda_{\textbf{s}}$ is generated by $\{s (e_1+e_2), s(e_2-e_1), s(e_3-e_2), \ldots, s(e_n - e_{n-1})\}$, i.e. $$\{ (s,s,0^{n-2}), (-s,s,0^{n-2}), (0,-s,s,0^{n-3}),\ldots,(0^{n-2},-s,s)\}.$$
Finally, for $\textbf{s}=(s^n)$, the group $\Lambda_{\textbf{s}}$ is generated by $\{2se_1,2se_2,\ldots,2se_{n-1},s(e_1+\ldots+e_n)\}$, i.e. $$\{ (2s,0^{n-1}), (0,2s,0^{n-2}),\ldots,(0^{n-2},2s,0),(s^n)\}.$$

We first check for which values of $s$ and $k$ the $\{0,1\}$-truncation of $\{4,3^{n-2},4\}_{\bf{s}}$ is bipartite.
\begin{lemma}
    The $\{0,1\}$-truncation of $\{4,3^{n-2},4\}_{\bf{s}}$ is non-bipartite if and only if both $s$ and $k$ are odd. Otherwise, the $\{0,1\}$-truncation of $\{4,3^{n-2},4\}_{\bf{s}}$ is bipartite.
\end{lemma}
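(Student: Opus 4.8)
The plan is to use the characterization from Proposition~\ref{bipartite_cond}: the $\{0,1\}$-truncation $\GG$ of $\Gamma = \{4,3^{n-2},4\}_{\bf s}$ is bipartite if and only if the map $\varphi\colon S \to \ZZ_2$ sending $\rho_0 \mapsto 1$ and $\rho_i \mapsto 0$ for $i \neq 0$ extends to a homomorphism $G \to \ZZ_2$, equivalently, if and only if every relator of the standard presentation of $G = [4,3^{n-2},4]_{\bf s}$ has an even number of $\rho_0$'s. So first I would recall that $G$ is the affine Coxeter group $[4,3^{n-2},4] = \langle \rho_0,\ldots,\rho_n\rangle$ modulo the single extra relator $w := (\rho_0\rho_1\cdots\rho_{n-1}\rho_n\rho_{n-1}\cdots\rho_{k+1}\rho_k)^{ks}$ from Theorem~\ref{thm:cubicPoly}, where ${\bf s} = (s^k,0^{n-k})$, $k \in \{1,2,n\}$.

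Next I would check the Coxeter relators. Every Coxeter relator of $[4,3^{n-2},4]$ is of the form $\rho_i^2$ or $(\rho_i\rho_j)^{p_{ij}}$. The only one involving $\rho_0$ is $(\rho_0\rho_1)^4$ (and $\rho_0^2$), both of which contain an even number of $\rho_0$'s. Also $(\rho_0\rho_j)^2$ for $j\geq 2$. Hence the Coxeter presentation alone always gives a bipartite truncation; the bipartiteness of $\Gamma$ is governed entirely by the extra relator $w$. Now I would count the occurrences of $\rho_0$ in $w$. Inside one copy of the bracketed word $u := \rho_0\rho_1\cdots\rho_{n-1}\rho_n\rho_{n-1}\cdots\rho_{k+1}\rho_k$, the generator $\rho_0$ appears exactly once if $k = 1$ (the word runs down to $\rho_1$, but actually to $\rho_k=\rho_1$, so $\rho_0$ occurs once at the start) and does not appear at all if $k \in \{2,n\}$ with $n \geq 3$ — wait, for $k=n$ the word is just $\rho_0\rho_1\cdots\rho_n$ and $\rho_0$ occurs once; for $k=2$ the word is $\rho_0\rho_1\cdots\rho_n\rho_{n-1}\cdots\rho_2$ and $\rho_0$ occurs once. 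So in all three cases $\rho_0$ occurs exactly once in $u$, hence exactly $ks$ times in $w = u^{ks}$. Therefore $\varphi(w) = ks \bmod 2$, so $w$ has an even number of $\rho_0$'s if and only if $ks$ is even, i.e. if and only if $k$ is even or $s$ is even.

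Assembling this: $\GG$ is bipartite $\iff$ every relator has an even number of $\rho_0$'s $\iff$ $ks$ is even $\iff$ ($k$ even or $s$ even); equivalently, $\GG$ is non-bipartite $\iff$ $ks$ is odd $\iff$ both $k$ and $s$ are odd. Since the admissible values are $k \in \{1,2,n\}$, "$k$ odd" means $k = 1$ or ($k = n$ and $n$ odd), matching the statement once one notes the lemma phrases the condition on $k$ directly. The only genuine subtlety — and the step I would be most careful about — is justifying that it suffices to count $\rho_0$'s in the \emph{given} relators rather than in all words representing the identity: this is exactly the content of the equivalence (c) $\iff$ (a) in Proposition~\ref{bipartite_cond}, so I would simply invoke it, but I would double-check that the presentation of Theorem~\ref{thm:cubicPoly} is indeed the "standard presentation" $G = \langle S \mid R\rangle$ in the sense of that proposition (generators the $\rho_i$, relators the Coxeter relators plus $w$), which it is. A secondary point worth a sentence is the parity count of $\rho_0$ in $u$ for each $k$, which I sketched above and which is a quick finite check.
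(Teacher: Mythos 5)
Your proposal is correct and follows essentially the same route as the paper: it invokes Proposition~\ref{bipartite_cond} to reduce bipartiteness to the parity of $\rho_0$-occurrences in the relators of the presentation from Theorem~\ref{thm:cubicPoly}, and then observes that the Coxeter relators are automatically even while the extra relator contains exactly $ks$ occurrences of $\rho_0$. The paper states this parity count without the explicit case-by-case verification, so your version is simply a more detailed write-up of the same argument.
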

\begin{proof}
    By Proposition~\ref{bipartite_cond}, we just need check when all relations have an even number of $\rho_0$'s. This is clearly true if and only if either $k$ or $s$ are even.
\end{proof}
We now verify that the leafs of the polyopes $\{4,3^{n-2},4\}_{\bf{s}}$ are non degenerate.
\begin{lemma}\label{lem:NondegenerateLeaf}
Let $n\geq 3$ and $s\geq 2$, with ${\bf s}=(s^k,0^{n-k})$ and $k\in\{1,2,n\}$. The regular polytope $\{4,3^{n-2},4\}_{\bf{s}}$ has a degenerate leaf if and only if $k=1$ and $s=2$. 
\end{lemma}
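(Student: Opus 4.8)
The plan is to recall that, for a $(0,1)$-leaf of a regular hypertope, non-degeneracy means precisely that conditions $(B_1)$ and $(B_2)$ hold at $\{0,1\}$, and by Lemma~\ref{algB1} condition $(B_1)$ is equivalent to $G_0 \cap \rho_0 G_0 \rho_0 = G_{0,1}$. Since all the polytopes $\{4,3^{n-2},4\}_{\mathbf{s}}$ arise as quotients of the cubic tessellation $\mathcal{T}$ of $\mathbb{E}^n$ by the lattice $\Lambda_{\mathbf{s}}$, I would work geometrically in $\mathbb{E}^n$: the $0$-elements are the images of vertices of $\mathcal{T}$ (i.e. classes of $\mathbb{Z}^n$ modulo $\Lambda_{\mathbf{s}}$), the $1$-elements are images of edges, and I would translate the failure of $(B_1)$ into the existence of two distinct vertices of the quotient joined by two distinct edges, equivalently the existence of a lattice vector of $\Lambda_{\mathbf{s}}$ that is the difference of two distinct edge-vectors $\pm e_i$ of the cube tessellation, or more precisely a nonzero vector of $\Lambda_{\mathbf{s}}$ of the form $e_i - e_j$ or $e_i + e_j$ or $2e_i$ connecting vertices already adjacent by a short edge. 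The cleanest formulation: the leaf is degenerate iff there exist two edges of $\mathcal{T}$ with the same pair of endpoints modulo $\Lambda_{\mathbf{s}}$, i.e. iff $\Lambda_{\mathbf{s}}$ contains a vector of the form $2e_i$ for some $i$ (a single basis edge traversed back gives a loop on one vertex, and two opposite edges at a vertex identify) — one must be careful and check all the ways two vertices at distance $1$ in the quotient graph can be joined by two edges.

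Next I would go through the three families explicitly using the generators of $\Lambda_{\mathbf{s}}$ listed just before the lemma. For $\mathbf{s} = (s^n)$, $\Lambda_{\mathbf{s}}$ contains $2se_1,\dots,2se_{n-1}$ and $(s^n)$; for $\mathbf{s} = (s,s,0^{n-2})$ it is generated by $(s,s,0^{n-2})$, $(-s,s,0^{n-2})$ and $(0^{j-1},-s,s,0^{n-j-1})$; for $\mathbf{s} = (s,0^{n-1})$ it is $s\mathbb{Z}^n$. In the last case $\Lambda_{\mathbf{s}} = s\mathbb{Z}^n$ contains $2e_i$ exactly when $s \mid 2$, i.e. $s = 2$ (since $s \geq 2$), and contains no shorter problematic vector otherwise; when $s \geq 3$ the shortest nonzero vectors have $\ell^1$-norm $\geq 3 > 2$, so no two vertices of the quotient are joined by two edges, and $(B_1)$ holds. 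This gives the "only if" direction: if $k = 1$ and $s = 2$, then $2e_i \in \Lambda_{\mathbf{s}}$, so the two edges $\{0, e_i\}$ and $\{0, -e_i\} = \{0, e_i - 2e_i\}$ of $\mathcal{T}$ have the same endpoints $\{0, e_i\}$ in the quotient, and $(B_1)$ fails; one then checks this indeed makes the leaf degenerate. For the "if" direction I would verify that in the other two families ($k = 2$, and $k = n$), and in the $k=1$ family with $s \geq 3$, no element of $\Lambda_{\mathbf{s}}$ realizes such a double edge: for $k = 2$ the generators have $\ell^1$-norm $2s \geq 4$ and any $\mathbb{Z}$-combination keeping $\ell^1$-norm $\leq 2$ must be zero (a short computation on the sublattice); for $k = n$ the relevant vectors $2se_i$ have norm $\geq 4$ and $(s^n)$ has norm $ns \geq 6$, so again nothing short enough appears. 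In each case $(B_1)$ holds.

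Finally I would dispatch condition $(B_2)$: once $(B_1)$ holds, for these highly symmetric geometries I would argue that $(B_2)$ is automatic. The forward implication of $(B_2)$ always holds by residual connectedness and the fact that the $\{0,i\}$-residues are generalized digons (as noted in the proof of Lemma~\ref{algB2}), and the reverse implication follows because in a quotient of the cubic tessellation a proper face $x$ (a $j$-dimensional subcube modulo $\Lambda_{\mathbf{s}}$) whose vertex set contains both endpoints $p, q$ of an edge $e$ of $\mathcal{T}$ — provided $p \neq q$ in the quotient and they are joined by a genuine unique edge — must actually contain that edge, because locally around $x$ the covering tessellation looks like the Euclidean picture where this is obvious, and the only obstruction (two vertices of $x$ which are "accidentally" at distance $1$ via a wrap-around edge not lying on $x$) is exactly what $(B_1)$, now known to hold, rules out. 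So in all cases except $k=1, s=2$ both $(B_1)$ and $(B_2)$ hold and the leaf is non-degenerate, while for $k=1,s=2$ condition $(B_1)$ fails.

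I expect the main obstacle to be the careful bookkeeping in showing $(B_1)$ fails \emph{only} in the stated case: one must be sure that no sporadic short vector in the $k=2$ or $k=n$ lattices (for small $s$, e.g. $s=2$) creates a hidden double edge, and that the equivalence "degenerate leaf $\iff$ $2e_i \in \Lambda_{\mathbf{s}}$" is stated and used correctly rather than some coarser sufficient condition. A secondary subtlety is making the $(B_2)$ argument rigorous rather than hand-wavy — it may be cleanest to invoke the algebraic criterion of Lemma~\ref{algB2} and check $G_0\rho_0 \cap w G_0 = \emptyset$ directly for $w$ a generator of a parabolic $G_i$, $i \neq 0,1$, using the translation-lattice description.
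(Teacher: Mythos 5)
Your treatment of $(B_1)$ is essentially the paper's argument: work in the tessellation $\mathcal{T}$, observe that a double edge in the quotient requires $\Lambda_{\mathbf{s}}$ to contain a vector joining two vertices at distance two in $\mathcal{T}$ (i.e.\ of the form $2e_i$ or $e_i\pm e_j$), and check from the explicit generators that this forces $\mathbf{s}=(2,0^{n-1})$ or $\mathbf{s}=(1,1,0^{n-2})$, the latter excluded by $s\geq 2$. That half is sound.

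The genuine gap is in your handling of $(B_2)$. You assert that, once $(B_1)$ holds, the reverse implication of $(B_2)$ is automatic because ``the only obstruction (two vertices of $x$ accidentally at distance $1$ via a wrap-around edge not lying on $x$) is exactly what $(B_1)$ rules out.'' That inference is false: $(B_1)$ only forbids two distinct edges on the same vertex pair, whereas the $(B_2)$ failure mode is a single wrap-around edge joining two vertices of a face $x$ that are \emph{not} adjacent on $x$ (e.g.\ diagonal vertices of a square face), which creates no double edge at all. The paper's own Example~\ref{example:hemicube} (the hemicube) is precisely a quotient of the cube satisfying $(B_1)$ at the $(0,1)$-leaf but failing $(B_2)$, so ``$(B_1)\Rightarrow(B_2)$'' cannot be invoked. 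In the paper, verifying $(B_2)$ is the bulk of the proof of Lemma~\ref{lem:NondegenerateLeaf}: an induction on the type $i\in\{2,\dots,n\}$, where for an $i$-face $x$ and vertices $p,q\in\sigma_0(x)$ one reduces (using $(B_1)$ and the induction hypothesis) to $q$ being the antipode of $p$ on the $i$-cube, and then checks explicitly which lattice identifications could make $p$ adjacent to $q$, showing each would force $s=1$ or contradict the classification of admissible vectors $\mathbf{s}$ in Theorem~\ref{thm:cubicPoly}. Your fallback suggestion of verifying the algebraic criterion of Lemma~\ref{algB2} directly is not carried out, so as it stands the $(B_2)$ (and hence the ``non-degenerate'') half of the lemma is unproved in your proposal.
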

\begin{proof}
    To satisfy condition $(B_1)$, we need to show that there is at most one edge incident to two given vertices. Clearly, this is true in the tessellation $\mathcal{T}$. Therefore, this will also be true in the polytope $\{4,3^{n-2},4\}_{\textbf{s}}$, as long as no double edge is created when taking the quotient of $\mathcal{T}$ by the action of $\Lambda_\textbf{s}$. A double edge is created only if we can find two vertices $v$ and $w$ of $\mathcal{T}$ such that they are at distance $2$ in the $\{0,1\}$-truncation of $\mathcal{T}$ and they get identified by $\Lambda_\textbf{s}$. This can only happen if the vector $\textbf{s}$ is equal to $(2,0^{n-1})$ or to $(1,1,0^{n-2})$. Since Theorem~\ref{thm:cubicPoly} requires $s \geq 2$, only the case of $\textbf{s} = (2,0^{n-1})$ can happen. Hence $(B_1)$ is not satisfied when $\textbf{s}=(2,0^{n-1})$.

    Assume now that condition $(B_1)$ is satisfied. For condition $(B_2)$, the forward implication is true since $\{4,3^{n-2},4\}_{\bf{s}}$ is residually connected and the rank two residues of type $\{0,i\}$, with $i\in I\backslash\{0,1\}$ are generalized digons (see \cite[Lemma 4.4]{Leemans2000}). For the reverse implication, we will prove by induction on the types $i\in\{2,\ldots,n\}$ that if $\sigma_0(e)\subseteq \sigma_0(x)$ then $e*x$.
    We remind the reader that the infinite tesselation $\{4,3^{n-2},4\}$ is vertex-describable since its automorphism group is the universal Coxeter group $[4,3^{n-2},4]$~\cite[Theorem 3D7(d)]{ARP}.
    Consider a 2-element $x$ and an edge $e$ such that $e=\{p,q\}$. Let $\{p,q\}\subseteq \sigma_0(x)$ and suppose, for a contradiction, that $e$ is not incident to $x$. Let $p$ be the vertex $(0^{n})$ and $x$ be the face incident with the vertices $(0^n)$, $(1,0^{n-1})$, $(0,1,0^{n-2})$ and $(1,1,0^{n-2})$. Due to $(B_1)$, $q\notin\{(1,0^{n-1}),(0,1,0^{n-2})\}$ as this would define a new edge $\{p,q\}$ different from $e$.
    Hence, $q=(1,1,0^{n-2})$. As $\{p,q\}$ identifies the edge $e$, $p$ will be identified with either vertices $(1,1,0^{j},1,0^{n-j-3})$, for $0 \leq j\leq n-3$, $(2,1,0^{n-2})$ or $(1,2,0^{n-2})$. If $p=(1,1,0^{j},1,0^{n-j-3})$, then we have that $s=1$, a contradiction.
    If $p$ is either $(2,1,0^{n-2})$ or $(1,2,0^{n-2})$ then $\{4,3^{n-2},4\}_{\textbf{s}}$ is not a regular polytope, a contradiction. Hence $e*x$.
    Let now $i\in\{3,\ldots,n\}$ and assume that for an element of type $i-1$ the induction hypothesis holds. Consider an element $x$ of type $i$ and an edge $e$ such that $e=\{p,q\}$. Let $\{p,q\}\subseteq \sigma_0(x)$ and suppose, for a contradiction, that $e$ is not incident to $x$. Consider that $p=(0^{n})$. By $(B_1)$ and the induction hypothesis, the only vertex possible for $q$ is the antipode of $p$ on the $i$-cube, which without lost of generality can be thought as $(1^i,0^{n-i})$. As before, since $\{p,q\}$ identifies the edge $e$, $p$ will be identified with either vertices that imply that $s=1$ or that $\{4,3^{n-2},4\}_{\textbf{s}}$ is not a regular polytope, a contradiction. Hence $e*x$.
\end{proof}

Consider the non-degenerate polytopes $\{4,3^{n-2},4\}_{\bf{s}}$. 
Before stating the general case, we make explicit the two simplest cases, which illustrate the general behaviour.

\begin{example}\label{example:(3,0,0)}
    Let ${\bf s}=(3,0,0)$. Then the polytope $\Gamma = \{4,3,4\}_{(3,0,0)}$ can be obtained by identifying the sides of a $3 \times 3 \times 3$ cube, as depicted in Figure \ref{fig:(3,0,0)}. Since the $\{0,1\}$-truncation of $\Gamma$ is not a bipartite graph, we cannot use Construction~\ref{BPRH} of \cite{Leemans2000}. We instead use our new Construction~\ref{constr:notBipartite}. So we double the vertices, and since the $\{0,1\}$-truncations of cubes and squares are bipartite, we also double the squares and cubes. A visual representation of a ``cube" of $\PP(\Gamma)(0,1)$ is shown in Figure \ref{fig:(3,0,0)}. Note that if we do not double the cubes and squares, the geometry would not be residually connected.
\end{example}

\begin{figure}\centering
    \begin{tikzpicture}
        \draw (0,0) -- (3,0);
        \draw (3,3) -- (3,0);
        \draw (0,3) -- (3,3);
        \draw (0,0) -- (0,3);

        \draw (0,3) -- (2,4);
        \draw (0+3,3) -- (2+3,4);
        \draw (0+3,3-3) -- (2+3,4-3);

        \draw (2+3,4-3) -- (2+3,4);
        \draw (2,4) -- (2+3,4);
        \foreach \i in {1,...,2}
        {
            \draw (\i,0) -- (\i,3);
            \draw (0,\i) -- (3,\i);
            \draw (\i,3) -- (\i+2,4);
            \draw (3,\i) -- (3+2,\i+1);
            \draw (3+ 2*\i/3,\i/3+3) -- (2*\i/3,\i/3+3);
            \draw (3+ 2*\i/3,\i/3) -- (3+2*\i/3,\i/3+3);
        }

        \draw[dotted] (2,2) --(2+2/3,2+1/3) -- (3+2/3,2+1/3);
        \draw[dotted] (2+2/3,2+1/3) --(2+2/3,3+1/3);
        \filldraw [olive] (3,3) circle (2pt);
        \filldraw [olive] (2,2) circle (2pt);
        \filldraw [olive] (3+2/3,2+1/3) circle (2pt);
        \filldraw [olive] (2+2/3,3+1/3) circle (2pt);
        \filldraw [teal] (2,3) circle (2pt);
        \filldraw [teal] (3,2) circle (2pt);
        \filldraw [teal] (3+2/3,3+1/3) circle (2pt);
        \filldraw [teal] (2+2/3,2+1/3) circle (2pt);

        \draw[dotted,teal] (2,2) -- (2+2/3,3+1/3) -- (3+2/3,2+1/3) --(3,3)--(2,2) ;
    \end{tikzpicture}
    \caption{The polytope $\Gamma = \{4,3,4\}_{(3,0,0)}$ is obtained by identifying the sides of the $3 \times 3 \times 3$ cube depicted in this figure. The $\{0,1\}$-truncation of $\Gamma$ is not bipartite, but the $\{0,1\}$-truncation of the residue of any cube or square is, as showed for the top-right cube.}
    \label{fig:(3,0,0)}   
\end{figure}
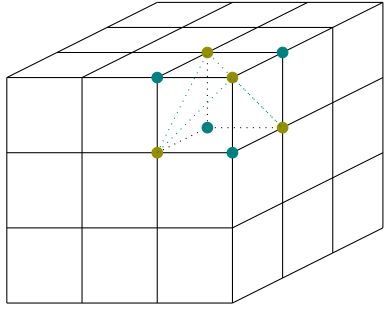

\begin{example}\label{example:(4,0,0)}
    Let $\textbf{s} = (4,0,0)$. In this case, the polytope $\Gamma = \{4,3,4\}_{(4,0,0)}$ can be obtained by identifying the sides of a $4 \times 4 \times 4$ cube. The $\{0,1\}$-truncation is bipartite, so we can use Construction~\ref{BPRH}. The hypertope $\BP(\Gamma)(0,1)$ has vertices of each color as elements of type $0$ and $1$, respectively, and faces and squares as elements of type $2$ and $3$, respectively (see Figure~\ref{fig:(4,0,0)}).
\end{example}

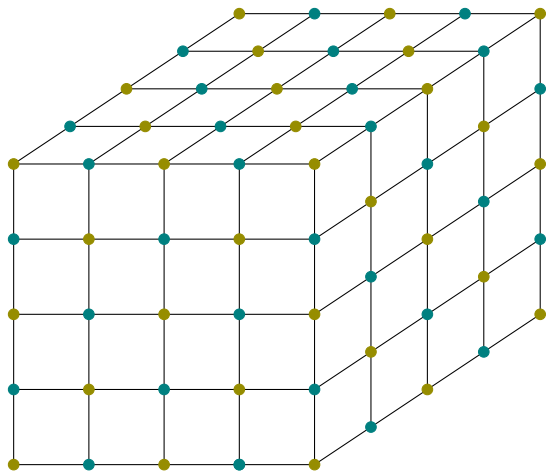
\begin{figure}\centering
    \begin{tikzpicture}
        \draw (0,0) -- (4,0);
        \draw (4,4) -- (4,0);
        \draw (0,4) -- (4,4);
        \draw (0,0) -- (0,4);

        \draw (0,4) -- (3,6);
        \draw (0+4,4) -- (3+4,6);
        \draw (0+4,0) -- (3+4,6-4);

        \draw (3+4,6-4) -- (3+4,6);
        \draw (3,6) -- (3+4,6);
        
        \foreach \i in {1,...,3}
        {
            \draw (\i,0) -- (\i,4);
            \draw (0,\i) -- (4,\i);
            \draw (\i,4) -- (\i+3,6);
            \draw (4,\i) -- (4+3,\i+2);
            
            \draw (4+ 3*\i/4,2*\i/4+4) -- (3*\i/4,2*\i/4+4);
            \draw (4+ 3*\i/4,2*\i/4) -- (4+3*\i/4,2*\i/4+4);
        }
        \foreach \i in {0,2,4}
        {
        \filldraw[olive] (0+3*\i/4,4+2*\i/4) circle (2pt);
        \filldraw[olive] (2+3*\i/4,4+2*\i/4) circle (2pt);
        \filldraw[olive] (4+3*\i/4,4+2*\i/4) circle (2pt);
        \filldraw[olive] (4+3*\i/4,2+2*\i/4) circle (2pt);
        \filldraw[olive] (4+3*\i/4,0+2*\i/4) circle (2pt);
        }
        \foreach \i in {1,3}
        {
        \filldraw[teal] (0+3*\i/4,4+2*\i/4) circle (2pt);
        \filldraw[teal] (2+3*\i/4,4+2*\i/4) circle (2pt);
        \filldraw[teal] (4+3*\i/4,4+2*\i/4) circle (2pt);
        \filldraw[teal] (4+3*\i/4,2+2*\i/4) circle (2pt);
        \filldraw[teal] (4+3*\i/4,0+2*\i/4) circle (2pt);
        }
        \foreach \i in {0,2,4}
        {
        \filldraw[teal] (1+3*\i/4,4+2*\i/4) circle (2pt);
        \filldraw[teal] (3+3*\i/4,4+2*\i/4) circle (2pt);
        \filldraw[teal] (4+3*\i/4,3+2*\i/4) circle (2pt);
        \filldraw[teal] (4+3*\i/4,1+2*\i/4) circle (2pt);
        }
        \foreach \i in {1,3}
        {
        \filldraw[olive] (1+3*\i/4,4+2*\i/4) circle (2pt);
        \filldraw[olive] (3+3*\i/4,4+2*\i/4) circle (2pt);
        \filldraw[olive] (4+3*\i/4,3+2*\i/4) circle (2pt);
        \filldraw[olive] (4+3*\i/4,1+2*\i/4) circle (2pt);
        }
        \foreach \x in {0,1,2,3}
        {
                \filldraw[olive](\x,\x)circle (2pt);        
        }
        \filldraw[olive](2,0)circle (2pt);    
        \filldraw[olive](0,2)circle (2pt);
        \filldraw[olive](1,3)circle (2pt);
        \filldraw[olive](3,1)circle (2pt);  
        \filldraw[teal](1,0)circle (2pt);
        \filldraw[teal](3,0)circle (2pt);
        \filldraw[teal](0,1)circle (2pt);
        \filldraw[teal](3,0)circle (2pt);
        \filldraw[teal](1,2)circle (2pt);
        \filldraw[teal](2,1)circle (2pt);
        \filldraw[teal](3,2)circle (2pt);
        \filldraw[teal](2,3)circle (2pt);
        \filldraw[teal](0,3)circle (2pt);
    \end{tikzpicture}
    \caption{The polytope $\Gamma = \{4,3,4\}_{(4,0,0)}$ is obtained by identifying the sides of the $4 \times 4 \times 4$ cube shown here. The $\{0,1\}-$truncation of $\Gamma$ is bipartite, hence $2$-colorable.}
    \label{fig:(4,0,0)}   
\end{figure}

We now prove the general case.

\begin{prop}\label{Bnhypertope}
    Let $\Gamma$ be the regular polytope $\{4,3^{n-2},4\}_{\bf{s}}$ with ${\bf s} \neq (2,0^{n-1})$. Then $H(\Gamma)(0,1)$ is a regular hypertope. Moreover, $H(\Gamma)(n,n-1)$ is dual of $H(\Gamma)(0,1)$. 
\end{prop}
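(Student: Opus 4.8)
The plan is to invoke the general machinery built in the earlier sections and reduce everything to checking hypotheses. First I would establish that $\Gamma = \{4,3^{n-2},4\}_{\bf s}$ with ${\bf s}\neq(2,0^{n-1})$ is a regular, non-degenerate $(0,1)$-leaf hypertope: regularity holds because $\Gamma$ is a regular polytope (its automorphism group is a string C-group), the $(0,1)$-leaf is genuinely a leaf because the Coxeter diagram $[4,3^{n-2},4]$ is a path whose first edge has label $4$ and whose second node is only adjacent to the first and third, and non-degeneracy of this leaf --- i.e. conditions $(B_1)$ and $(B_2)$ for the pair $\{0,1\}$ --- is exactly the content of Lemma~\ref{lem:NondegenerateLeaf} once we exclude ${\bf s}=(2,0^{n-1})$. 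With $(B_1)$ and $(B_2)$ in hand, Theorem~\ref{thm:main2} tells us that the coset geometry $(H(G),(H(G)_i)_{i\in I})$ is isomorphic to $H(\Gamma)(0,1)$, and either Theorem~\ref{thm:main1} (non-bipartite case) or Theorem~\ref{thm:BP_fromDimi} (bipartite case) guarantees that $H(\Gamma)(0,1)$ is residually connected, thin, and flag-transitive --- hence a regular hypertope. Whether we land in the bipartite or non-bipartite branch is controlled by the parity of $s$ and $k$ via the lemma preceding Lemma~\ref{lem:NondegenerateLeaf}, but in both branches the conclusion ``regular hypertope'' follows, so no case split is really needed for the first assertion.

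For the duality statement, I would exploit that $\Gamma=\{4,3^{n-2},4\}_{\bf s}$ is self-dual (Theorem~\ref{thm:cubicPoly}): there is a correlation $\delta$ of $\Gamma$ reversing the type set, sending type $i$ to type $n-i$. In particular $\delta$ carries the $(0,1)$-leaf of the diagram to the $(n,n-1)$-leaf, and since $\delta$ is an isomorphism of incidence structures that also respects shadows, $\Gamma$ satisfies $(B_1)$ and $(B_2)$ for the pair $\{n,n-1\}$ as well, so $H(\Gamma)(n,n-1)$ is defined and is itself a regular hypertope by the same argument. The key point is then that the halving geometry construction is natural with respect to correlations: applying $\delta$ to $\Gamma$ and then halving at $\{n,n-1\}$ should produce a geometry isomorphic to the dual of halving $\Gamma$ at $\{0,1\}$. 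Concretely, I would write down the correlation of $H(\Gamma)(0,1)$ explicitly by composing $\delta$ with the type-swapping duality of $\PP(\Gamma)$ (or $\BP(\Gamma)$) exhibited just before Section~\ref{subsec:thin-ish} --- the map $\alpha(p,i)=(p,i+1\bmod 2)$, $\alpha(x,P)=(x,\bar P)$ --- and check it intertwines the incidences of $H(\Gamma)(0,1)$ with those of $H(\Gamma)(n,n-1)$ after reversing types. Equivalently, on the group side one observes $H(G)(0,1)$ and $H(G)(n,n-1)$ are conjugate (via the image of $\delta$ in $\mathrm{Cor}(\Gamma)$) in a way that swaps the ordered generating set end-to-end, and then Theorem~\ref{thm:main2} applied to both leaves transports this into an anti-isomorphism of the two coset geometries.

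The main obstacle I anticipate is bookkeeping around the partitions. The elements of $\PP(\Gamma)(0,1)$ of type $i\neq 0,1$ are pairs $(x,P_x)$ with $P_x\in\pi(\Gamma_x[0,1])$, and one must verify that the correlation $\delta$ of $\Gamma$ --- which permutes the residues $\Gamma_x[0,1]$ --- induces a compatible bijection on these partition sets, and that this bijection behaves correctly under the $\bar{P}$ operation so that incidence (intersection of partitions, and the $p\in P_y$ versus $p\in\bar P_y$ asymmetry between type $0$ and type $1$) is reversed consistently with the reversal of the whole type set. There is also the parity subtlety: $\Gamma_x[0,1]$ may be bipartite even when $\Gamma[0,1]$ is not, so $H(\Gamma)(0,1)$ can involve genuine doubling in some residues and not others, and the dual geometry $H(\Gamma)(n,n-1)$ must exhibit exactly the mirror pattern --- this is where I would lean on Lemma~\ref{lem:residueNot01} and the self-duality to match residue types on the nose. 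Once the correlation is written down the verification is routine but somewhat tedious; everything else is a direct citation of the section's theorems.
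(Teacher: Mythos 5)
Your proposal is correct and follows essentially the same route as the paper: Lemma~\ref{lem:NondegenerateLeaf} for non-degeneracy of the $(0,1)$-leaf, the main construction theorems for the regular-hypertope conclusion, and self-duality of $\Gamma$ to transport everything to the $(n,n-1)$-leaf. If anything, you are more careful than the paper's one-line proof, since you explicitly invoke Theorem~\ref{thm:BP_fromDimi} in the bipartite branch and sketch the partition bookkeeping behind the duality, both of which the paper leaves implicit.
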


\begin{proof}
    Lemma~\ref{lem:NondegenerateLeaf} shows that the $(0,1)$-leaf of $\Gamma$ is non-degenerate. Hence, $\Gamma$ satisfies all conditions of Theorem~\ref{thm:main1}. Since $\Gamma$ is self-dual, this means that the $(n,n-1)$-leaf of $\Gamma$ is also non-degenerate, implying that $H(\Gamma)(0,1)$ is the dual of $H(\Gamma)(n,n-1)$.
\end{proof}

We wish to apply our halving operation again on $H(\Gamma)(0,1)$. For that, we need to verify that the new leafs are still non-denegerate.

\begin{prop}\label{b1b2forhalvedcubic}
    Let $\Gamma$ be the regular polytope $\{4,3^{n-2},4\}_{\bf{s}}$ with ${\bf s} \notin \{(2,0^{n-1}),(2,2,0^{n-2})\}$. Then $H(\Gamma)(0,1)$ has exactly $3$ leafs and they are all non-degenerate.
\end{prop}

\begin{proof}
    The $(n,n-1)$-leaf is non-degenerate as a direct consequence of Proposition~\ref{prop:b1b2}.
    The other two leafs are $(0,2)$ and $(1,2)$. Consider then the leaf $(0,2)$. Regardless of whether we used Construction~\ref{BPRH} or \ref{constr:notBipartite} to go from $\{4,3^{n-2},4\}_\textbf{s}$ to $H(\Gamma)(0,1)$, two elements of type $0$ of $\Gamma$ are adjacent in the $\{0,2\}$-truncation of $\Gamma$ if and only if the corresponding vertices of $\{4,3^{n-2},4\}_\textbf{s}$ are at distance $2$ in the $\{0,1\}$-truncation of $\{4,3^{n-2},4\}_\textbf{s}$, seen as a graph. Therefore, the same argument used in Lemma~\ref{lem:NondegenerateLeaf}, shows that two elements of type $0$ of $H(\Gamma)(0,1)$ cannot possibly be incident to two distinct faces of $\Gamma$, unless if $\textbf{s}=(2,2,0^{n-2})$. For the leaf $(1,2)$, we arrive to the same conclusion. Hence when ${\bf s} \notin \{(2,0^{n-1}),(2,2,0^{n-2})\}$, $H(\Gamma)(0,1)$ satisfies $(B_1)$ for the leafs $(0,2)$ and $(1,2)$.
    Suppose then that $H(\Gamma)(0,1)$ satisfies $(B_1)$ for the leaf $(0,2)$. Following the proof of Lemma~\ref{lem:NondegenerateLeaf}, we can prove that $H(\Gamma)(0,1)$ satisfies $(B_2)$ for $i\geq 3$. For $i=1$, using again geometrical arguments on the infinite tessellation, it can be easily checked that $(B_2)$ fails if and only if $\textbf{s}=(2,2,0^{n-2})$. The same result can be obtained starting from the leaf $(1,2)$.
\end{proof}

\subsection{Halving groups presentations}\label{subsec:halvgroup}\hfill
\\

Here, we determine natural group relations of the halving group for the halving geometries $H(\Gamma)(0,1)$ and $H(\Gamma)(0,1)(n,n-1)$ coming from the cubic toroids. To do so, we will use Tietze transformations and the Reidemeister-Schreier subgroup presentation (see \cite{magnus2004combinatorial}, for example). When $\Gamma[0,1]$ is not bipartite, the automorphism group does not change. The goal is then to rewrite the relation with $\Tilde{\rho}_0$ instead of $\rho_0$. 

\begin{prop}\label{grouppres_odd}
    Let both $k$ and $s$ be odd and $\tilde{\rho}_0 = \rho_0\rho_1\rho_0$. Then, $\Gamma[0,1]$ is non-bipartite and the automorphism group of $H(\Gamma)(0,1)$ will be a quotient of the Coxeter group $[\nfrac{3}{3},3^{n-3},4] = \langle\tilde{\rho}_0,\rho_1,\ldots,\rho_n\rangle$ by the following extra relation
    $$ (\tilde{\rho}_0\rho_2\ldots\rho_{n-1}\rho_n\rho_{n-1}\ldots\rho_2\rho_1)^{2s} = id $$
    for $k=1$, and 
    $$ (\tilde{\rho}_0\rho_2\ldots\rho_{n-1}\rho_n\rho_1\rho_2\ldots\rho_{n-1}\rho_n)^{ns}=id$$
    for $k=n$.    
\end{prop}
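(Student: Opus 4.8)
The plan is to start from the standard single extra relation for the cubic toroid $\{4,3^{n-2},4\}_{\bf s}$ given in Theorem~\ref{thm:cubicPoly}, namely $(\rho_0\rho_1\ldots\rho_{n-1}\rho_n\rho_{n-1}\ldots\rho_{k+1}\rho_k)^{ks}=\mathrm{id}$, and rewrite it in terms of the new generating set $\{\tilde\rho_0,\rho_1,\ldots,\rho_n\}$ where $\tilde\rho_0=\rho_0\rho_1\rho_0$. Since $k$ and $s$ are both odd, by Proposition~\ref{bipartite_cond} the $\{0,1\}$-truncation is non-bipartite and Proposition~\ref{grouppres_odd} asserts $H(G)=G$; so the only task is a presentation change, for which I will use Tietze transformations. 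The key algebraic fact is that $\rho_0=\rho_1\tilde\rho_0\rho_1$ (because $\rho_1$ is an involution and $\tilde\rho_0=\rho_0\rho_1\rho_0$ gives $\rho_1\tilde\rho_0\rho_1=\rho_1\rho_0\rho_1\rho_0\rho_1$... wait — more carefully, from $\tilde\rho_0=\rho_0\rho_1\rho_0$ one gets $\rho_0\tilde\rho_0\rho_0=\rho_1$, hence $\rho_0=\tilde\rho_0\rho_1\tilde\rho_0$ does \emph{not} hold; the correct inversion is $\rho_0 = $ a word in $\tilde\rho_0,\rho_1$ of the form coming from the dihedral group $\langle\rho_0,\rho_1\rangle\cong [4]$, where $\tilde\rho_0$ and $\rho_1$ generate the index-2 (dihedral of order $4$) subgroup). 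I would first record the Coxeter-diagram relations of $[\nfrac{3}{3},3^{n-3},4]$ on $\{\tilde\rho_0,\rho_1,\ldots,\rho_n\}$ — these follow from the halving diagram in Section 2.3 (the $\tilde\rho_0$--$\rho_1$ and $\tilde\rho_0$--$\rho_2$ edges have label $3$ since $p_{0,1}=4$ is even so $k=p_{0,1}/2=2$, giving $(\tilde\rho_0\rho_1)^2$... no: the halving-diagram label between $\tilde\rho_0$ and $\rho_1$ is $p_{1,2}$ and between $\tilde\rho_0$ and $\rho_2$ is $p_{0,1}/2$; I will just cite the diagram in the excerpt). That these Coxeter relations hold and that $G/\langle\text{one relator}\rangle$ in the old presentation maps onto the claimed quotient is what gives the presentation; the Tietze machinery handles the converse.

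Concretely, I would proceed as follows. First, substitute $\rho_0 \mapsto$ its expression in $\tilde\rho_0,\rho_1$ into the relator $w:=(\rho_0\rho_1\cdots\rho_n\cdots\rho_k)^{ks}$. For $k=1$: the ``palindrome'' core is $u:=\rho_0\rho_1\rho_2\cdots\rho_{n-1}\rho_n\rho_{n-1}\cdots\rho_2\rho_1$, and $w=u^{s}$ (here $ks=s$). One computes $\tilde\rho_0^{-1}\cdot(\text{conjugate of }u)$ or, more cleanly, notes that $\rho_0\rho_1 = \rho_1\cdot(\rho_1\rho_0\rho_1) = \rho_1\tilde\rho_0$ — this identity $\rho_0\rho_1=\rho_1\tilde\rho_0$ is exact from $\tilde\rho_0=\rho_0\rho_1\rho_0$. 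Hence $u=\rho_1\tilde\rho_0\rho_2\rho_3\cdots\rho_n\cdots\rho_2\rho_1$, so $u$ is conjugate by $\rho_1$ to $\tilde\rho_0\rho_2\rho_3\cdots\rho_{n-1}\rho_n\rho_{n-1}\cdots\rho_3\rho_2\rho_1\cdot\rho_1$? — I need to be careful with the trailing $\rho_1$, but after conjugation by $\rho_1$ (which doesn't change the order of an element) I expect to land on exactly $\tilde\rho_0\rho_2\cdots\rho_{n-1}\rho_n\rho_{n-1}\cdots\rho_2\rho_1$, whose $s$-th power being trivial is the claimed relation $(\tilde\rho_0\rho_2\cdots\rho_{n-1}\rho_n\rho_{n-1}\cdots\rho_2\rho_1)^{2s}$ — note the exponent doubles because... here is the subtle point I will need to nail down: the exponent in the statement is $2s$, not $s$, so the rewriting must produce a relator that is a \emph{square root} of (a conjugate of) $w$; this presumably comes from the fact that in the halving group the element $\tilde\rho_0\rho_2\cdots\rho_n\cdots\rho_2\rho_1$ has order twice that of the corresponding Petrie-like element in $G$, or equivalently $u$ itself equals the square of that word up to conjugacy. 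I would verify this by a direct length-$2$ computation: show $(\tilde\rho_0\rho_2\cdots\rho_n\cdots\rho_2\rho_1)^2$ is conjugate to $u$ using $\rho_0\rho_1=\rho_1\tilde\rho_0$ and $\rho_1\rho_0=\tilde\rho_0\rho_1$ together with the commutations $[\rho_1,\rho_j]=1$ for $j\ge 3$.

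For $k=n$: now ${\bf s}=(s^n)$, the core word is $v:=\rho_0\rho_1\cdots\rho_{n-1}\rho_n\rho_{n-1}\cdots\rho_{n+1}\rho_n$ — but with $k=n$ the ``return'' part $\rho_{n-1}\cdots\rho_{k+1}\rho_k$ is empty below index $n$, so $v=\rho_0\rho_1\cdots\rho_{n-1}\rho_n$ and $w=v^{ns}$. Again using $\rho_0\rho_1=\rho_1\tilde\rho_0$ we get $v=\rho_1\tilde\rho_0\rho_2\rho_3\cdots\rho_{n-1}\rho_n$, conjugate by $\rho_1$ to $\tilde\rho_0\rho_2\cdots\rho_{n-1}\rho_n\rho_1$, and then I must show $(\tilde\rho_0\rho_2\cdots\rho_{n-1}\rho_n\rho_1\rho_2\cdots\rho_{n-1}\rho_n)$ is, up to conjugacy, the square of that — matching the stated relator $(\tilde\rho_0\rho_2\cdots\rho_{n-1}\rho_n\rho_1\rho_2\cdots\rho_{n-1}\rho_n)^{ns}$. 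So in both cases the final step is: (i) eliminate $\rho_0$ via the Tietze substitution $\rho_0=\rho_1\tilde\rho_0\rho_1$ (add generator $\tilde\rho_0$ with defining relation $\tilde\rho_0=\rho_0\rho_1\rho_0$, then eliminate $\rho_0$), (ii) simplify the transformed relator using $\rho_0\rho_1=\rho_1\tilde\rho_0$ and the digon relations $(\rho_0\rho_j)^2=1$ for $j\ge2$ which become $(\tilde\rho_0\rho_j)^2=1$ for $j\ge3$ and $(\tilde\rho_0\rho_1)^3=1$ from $(\rho_0\rho_1)^4=1$, and (iii) conjugate into the stated normal form. I expect the main obstacle to be exactly step (iii) — proving that the transformed relator is conjugate to a proper power (square) of the short word appearing in the statement, so that the exponent becomes $2s$ (resp. $ns$); this requires a careful bookkeeping of how $\rho_1$ shuttles through the palindrome, and is where a small but genuine computation is unavoidable. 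Everything else (the Coxeter relations defining $[\nfrac{3}{3},3^{n-3},4]$, the equality $H(G)=G$, faithfulness) is already supplied by the excerpt via Proposition~\ref{bipartite_cond}, Theorem~\ref{thm:main2}, and the halving diagram.
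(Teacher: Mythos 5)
Your overall strategy (rewrite the single extra relator of Theorem~\ref{thm:cubicPoly} in terms of $\tilde\rho_0$ by Tietze transformations, citing Proposition~\ref{bipartite_cond} for $H(G)=G$) is the same as the paper's, but the concrete algebra your plan rests on fails at the decisive points. The identity you call ``exact'', $\rho_0\rho_1=\rho_1\tilde\rho_0$, is false: it is equivalent to $(\rho_0\rho_1)^3=id$, whereas $\rho_0\rho_1$ has order $4$. Likewise the substitution $\rho_0=\rho_1\tilde\rho_0\rho_1$ in your step (i) cannot work: since $(\rho_0\rho_1)^2$ is central in $\langle\rho_0,\rho_1\rangle$ (dihedral of order $8$), one actually has $\rho_1\tilde\rho_0\rho_1=\tilde\rho_0$, and, as you yourself note, $\langle\tilde\rho_0,\rho_1\rangle$ is an index-two subgroup of $\langle\rho_0,\rho_1\rangle$, so it does not contain $\rho_0$ at all. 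Hence $\rho_0$ cannot be eliminated by any word in $\tilde\rho_0,\rho_1$ alone; this is precisely where the oddness of $k$ and $s$ must enter. The paper eliminates $\rho_0$ through the toroidal relation: for $k=1$, with $u=\rho_0\rho_1\rho_2\cdots\rho_{n-1}\rho_n\rho_{n-1}\cdots\rho_2\rho_1$ and the auxiliary generator $U:=u^2$, the relation $u^s=id$ with $s$ odd gives $u^{-1}=U^{(s-1)/2}$, whence $\rho_0=\rho_1\rho_2\cdots\rho_n\cdots\rho_2\rho_1\,U^{(s-1)/2}$, a word in the new generators; your sketch has no analogue of this step.

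The point you flag as the main obstacle (why the exponent is $2s$) is to be settled, in your plan, by showing that $w^2$ is conjugate to $u$, where $w=\tilde\rho_0\rho_2\cdots\rho_{n-1}\rho_n\rho_{n-1}\cdots\rho_2\rho_1$. That claim is false: writing $a=\rho_2\cdots\rho_n\cdots\rho_2$ and using that $\rho_0$ commutes with $a$ together with $(\rho_0\rho_1)^4=id$, one gets the \emph{equality} $w^2=u^2$, not $w^2\sim u$; indeed $u$ acts as a unit lattice translation and $u^2$ as twice that translation, so they are not conjugate in the affine group, and for general odd $s$ not in the finite quotient either. The correct route is to replace $U$ by $w^2$ via $w^2=u^2$, turning $U^s=id$ into $w^{2s}=id$, and to recover $u^s=id$ in the other direction from the expression of $\rho_0$ above. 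Finally, your list of rewritten Coxeter relations is off: $\tilde\rho_0\rho_1=(\rho_0\rho_1)^2$ has order $2$, and the label-$3$ edge at $\tilde\rho_0$ goes to $\rho_2$ (conjugate $(\rho_1\rho_2)^3=id$ by $\rho_0$), not to $\rho_1$; with your labels the resulting diagram would be the linear $[3,3^{n-3},4]$ instead of $\left[\nfrac{3}{3},3^{n-3},4\right]$. The $k=n$ case inherits all of the same problems, so as written the proposal does not yield the stated presentations.
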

\begin{proof}
    When $\Gamma$ is non-bipartite, we have that $\Aut(\Gamma)\cong \Aut(H(\Gamma))$. Let $S = \{\rho_0,\ldots,\rho_{n}\}$ be the set of generators of $\Aut(\Gamma)$ and $R$ be the set of relations of $\Aut(\Gamma)$. Hence $R$ is generated by the relations of the Coxeter group $[4,3^{n-2},4]$ plus the extra relation $(\rho_0\rho_1\ldots\rho_{n-1}\rho_n\rho_{n-1}\rho_{n-2}\ldots\rho_{k+1}\rho_k)^{ks} = id$ (see Theorem~\ref{thm:cubicPoly}).
    We will divide the proof into two cases. The first case is when $k=1$ and $s$ odd, and the second case is when $k=n$ and $s$ and $n$ are odd.

    \textbf{Case $k=1$ and $s$ is odd:}
    Let $u = \rho_0\rho_1\ldots\ldots\rho_2\rho_1$.
    Using a Tietze transformation, we can add the generators $\tilde{\rho}_0 = \rho_0\rho_1\rho_0$ and $U = (\rho_0\rho_1\ldots\rho_{n-1}\rho_n\rho_{n-1}\ldots\rho_1)^2$ to $S$. Notice that $U^{\frac{s-1}{2}} = u^{-1}$ is a word written with even number of $\rho_0$.
    As $s$ is odd, we can rewrite the extra relation in $R$ as $U^s=id$.
    
    We can rewrite $\rho_0 = \rho_1\rho_2\ldots\rho_{n-1}\rho_n\rho_{n-1}\ldots\rho_2\rho_1 U^{\frac{s-1}{2}}$, which implies that the relation $(\rho_0\rho_i)^2 = id$ can be rewritten as $(U^{-1}\rho_i)^2=id$, for $i\geq 2$. However, as $U$ commutes with $\rho_i$, for $i\geq 2$, these relations can be then removed from $R$ as they can be derived from the others. 
    
    For $i\geq 3$, conjugating the relations $(\rho_1\rho_i)^2=id$ and $(\rho_1\rho_2)^3$ by $\rho_0$ gives us relations $(\tilde{\rho}_0\rho_i)^2=id$ and $(\tilde{\rho}_0\rho_2)^3=id$. The relation $(\rho_0\rho_1)^4=id$ can be rewritten as $(\tilde{\rho}_0\rho_1)^2=id$. 
    Moreover, 
    $$\tilde{\rho}_0 = \rho_0\rho_1\rho_0 = \rho_1\rho_2\ldots\rho_{n-1}\rho_n\rho_{n-1}\ldots\rho_2\rho_1 U^{\frac{s-1}{2}} \rho_1 U^{-\frac{s-1}{2}} \rho_1\rho_2\ldots\rho_{n-1}\rho_n\rho_{n-1}\ldots\rho_2\rho_1,$$ which is a relation that can be derived from the fact that $$u^{-1}\tilde{\rho}_0u = (\rho_1\rho_2\ldots\rho_{n-1}\rho_n\rho_{n-1}\ldots\rho_3\rho_2)^2\rho_1.$$
    Finally, rewrite $U=(\tilde{\rho}_0\rho_2\rho_3\ldots\rho_{n-1}\rho_n\rho_{n-1}\ldots\rho_2\rho_1)^2$.
    With this, we can remove both $\rho_0$ and $U$ from $S$ and all the relations with $\rho_0$ from $R$. This gives the Coxeter group $[\nfrac{3}{3},3^{n-3},4] = \langle\tilde{\rho}_0,\rho_1,\ldots,\rho_n\rangle$ factored by the extra relation $$(\tilde{\rho}_0\rho_2\ldots\rho_{n-1}\rho_n\rho_{n-1}\ldots\rho_2\rho_1)^{2s} = id.$$

    \textbf{Case $k=n$ and both $s$ and $n$ are odd:}
    Let $u = \rho_0\rho_1\ldots\rho_{n-1}\rho_n$.
    Using a Tietze transformation, we can add the generators $\tilde{\rho}_0 = \rho_0\rho_1\rho_0$ and $U = (\rho_0\rho_1\ldots\rho_{n-1}\rho_n)^2$ to $S$. Notice that $U^{\frac{ns-1}{2}} (= u^{-1})$ is a word written with even number of $\rho_0$.
    As both $n$ and $s$ are odd, we can rewrite the extra relation in $R$ as $U^{ns}=id$.
    
    We can rewrite that $\rho_0 = \rho_1\rho_2\ldots\rho_{n-1}\rho_n U^{\frac{ns-1}{2}}$, which implies that the relations $(\rho_0\rho_i)^2 = id$ can be rewritten as $(U^{-1}\rho_i)^2=id$, for $i\geq 2$. However, as we have that $\rho_{i-1}u\rho_i=u^{-1}$, for $i\geq 2$, the relations can be deduced $(U^{-1}\rho_i)^2=id$ from the remaining ones, and hence we will removed from $R$.
    
    For $i\geq 3$, conjugating the relations $(\rho_1\rho_i)^2=id$ and $(\rho_1\rho_2)^3$ by $\rho_0$ gives us relations $(\tilde{\rho}_0\rho_i)^2=id$ and $(\tilde{\rho}_0\rho_2)^3=id$. The relation $(\rho_0\rho_1)^4=id$ can be rewritten as $(\tilde{\rho}_0\rho_1)^2=id$. 
    Moreover, $$\tilde{\rho}_0 = \rho_0\rho_1\rho_0 = U^{-\frac{ns-1}{2}}(\rho_n\rho_{n-1}\ldots\rho_{2}\rho_1)  \rho_1  (\rho_1\rho_2\ldots\rho_{n-1}\rho_n) U^{\frac{s-1}{2}},$$ which is a relation that can be derived from the fact that $$u^{-1}\tilde{\rho}_0u = \rho_n\rho_{n-1}\ldots\rho_{2}\rho_1 \rho_2\ldots\rho_{n-1}\rho_n.$$
    Finally, rewrite $U=\tilde{\rho}_0\rho_2\rho_3\ldots\rho_{n-1}\rho_n\rho_1\rho_2\ldots\rho_{n-1}\rho_n$.
    With this, we can remove both $\rho_0$ and $U$ from $S$ and all the relations with $\rho_0$ from $R$. This gives the Coxeter group $[\nfrac{3}{3},3^{n-3},4] = \langle\tilde{\rho}_0,\rho_1,\ldots,\rho_n\rangle$ factored by the extra relation $$(\tilde{\rho}_0\rho_2\rho_3\ldots\rho_{n-1}\rho_n\rho_1\rho_2\ldots\rho_{n-1}\rho_n)^{ns} = id.$$

\end{proof}
We now handle the case where $\Gamma[0,1]$ is bipartite. Here the automorphism group changes, as its cardinality is divided by two.

\begin{prop}\label{grouppres_even}
    Let either $k$ or $s$ be even and $\tilde{\rho}_0 = \rho_0\rho_1\rho_0$. Then $\Gamma[0,1]$ is bipartite and the automorphism group of $H(\Gamma)(0,1)$ will be a quotient of the Coxeter group $[\nfrac{3}{3},3^{n-3},4] = \langle\tilde{\rho}_0,\rho_1,\ldots,\rho_n\rangle$ by the following extra relation
    $$ (\tilde{\rho}_0\rho_2\ldots\rho_{n-1}\rho_n\rho_{n-1}\ldots\rho_2\rho_1)^{s} = id $$
    for $k=1$, and 
    $$ (\tilde{\rho}_0\rho_2\ldots\rho_{n-1}\rho_n\rho_{n-1}\ldots\rho_{k+1}\rho_k\rho_1\rho_2\ldots\rho_{n-1}\rho_n\rho_{n-1}\ldots\rho_{k+1}\rho_k)^{ks/2}=id$$
    for $k\in\{2,n\}$.
\end{prop}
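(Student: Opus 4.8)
The plan is to run a Reidemeister--Schreier / Tietze-transformation argument entirely parallel to the proof of Proposition~\ref{grouppres_odd}, but now keeping track of the fact that $H(G)$ is an honest index-two subgroup of $G=[4,3^{n-2},4]_{\mathbf{s}}$ (by Proposition~\ref{bipartite_cond}, since every relator contains an even number of $\rho_0$'s when $k$ or $s$ is even). First I would fix the Schreier transversal $\{\mathrm{id},\rho_0\}$ for $H(G)$ in $G$ and recall that the standard presentation of $G$ is $[4,3^{n-2},4]=\langle\rho_0,\dots,\rho_n\rangle$ together with the single extra relator $w^{ks}=\mathrm{id}$, where $w=\rho_0\rho_1\cdots\rho_{n-1}\rho_n\rho_{n-1}\cdots\rho_{k+1}\rho_k$. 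Applying Reidemeister--Schreier to this data produces a presentation of $H(G)$ on the generators $\tilde\rho_0=\rho_0\rho_1\rho_0$, $\rho_1,\dots,\rho_n$ (these are exactly the rewritten images of the Schreier generators, after discarding the trivial ones), whose relators are the rewrites of the conjugates $r$ and $\rho_0 r\rho_0$ of each defining relator $r$ of $G$.

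Next I would carry out the Tietze simplification. The Coxeter relators of $[4,3^{n-2},4]$ not involving $\rho_0$ survive verbatim; the relators $(\rho_0\rho_i)^2$ for $i\ge 2$ become, after rewriting, consequences of the commutativity already present (exactly as in the proof of Proposition~\ref{grouppres_odd}); the relators $(\rho_1\rho_i)^2$, $i\ge 3$, and $(\rho_1\rho_2)^3$, conjugated by $\rho_0$, give $(\tilde\rho_0\rho_i)^2=\mathrm{id}$ and $(\tilde\rho_0\rho_2)^3=\mathrm{id}$; and $(\rho_0\rho_1)^4=\mathrm{id}$ rewrites to $(\tilde\rho_0\rho_1)^2=\mathrm{id}$. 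Together these assemble precisely into the Coxeter diagram $[\nfrac{3}{3},3^{n-3},4]=\langle\tilde\rho_0,\rho_1,\dots,\rho_n\rangle$. It then remains to rewrite the single extra relator. For $k=1$: set $v=\rho_2\cdots\rho_{n-1}\rho_n\rho_{n-1}\cdots\rho_2\rho_1$, so that $w=\rho_0 v$ and $w^{s}=(\rho_0 v)^s$; using $\rho_0\rho_1\rho_0=\tilde\rho_0$ one checks $(\rho_0 v)^2=\rho_0(v\rho_0 v)=\tilde\rho_0\,(\rho_1 v\rho_0 v\rho_1^{-1})\cdots$ — more cleanly, since $s$ is even here, $w^s=(w^2)^{s/2}$ and $w^2$ is a word with an even number of $\rho_0$'s, hence lies in $H(G)$ and rewrites (conjugating appropriately by $\rho_1$ to turn each $\rho_0\cdots\rho_0$ block into $\tilde\rho_0$) to $(\tilde\rho_0\rho_2\cdots\rho_{n-1}\rho_n\rho_{n-1}\cdots\rho_2\rho_1)^{s}$, as claimed. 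For $k\in\{2,n\}$ with $ks$ even, the same device applies to $w^{ks}=(w^2)^{ks/2}$, with $w=\rho_0\rho_1\cdots\rho_{n-1}\rho_n\rho_{n-1}\cdots\rho_{k+1}\rho_k$, and the rewrite of $w^2$ is $\tilde\rho_0\rho_2\cdots\rho_{n-1}\rho_n\rho_{n-1}\cdots\rho_{k+1}\rho_k\,\rho_1\rho_2\cdots\rho_{n-1}\rho_n\rho_{n-1}\cdots\rho_{k+1}\rho_k$, giving the stated relator raised to the power $ks/2$.

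The main obstacle is the bookkeeping in the last step: one must verify carefully that conjugating the relator $w^{ks}$ by the Schreier representatives and reducing with the Coxeter relations of $[\nfrac{3}{3},3^{n-3},4]$ produces \emph{exactly} the displayed relator and nothing more, i.e. that no further independent relator arises from the $\rho_0 r\rho_0$ copies, and that the identity $w^2 \mapsto (\text{displayed word})$ holds as written rather than up to a cyclic permutation or an overall conjugacy (which would not matter for the group but would clutter the statement). This is precisely the point where the parity hypothesis ``$k$ or $s$ even'' is used twice: once to guarantee $[G:H(G)]=2$ (via Proposition~\ref{bipartite_cond}), and once to guarantee that $w^{ks}$ is a genuine even power $(w^2)^{ks/2}$ so that the halved exponent $s$ (resp.\ $ks/2$) appears. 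I would present the $k=1$ computation in full detail and then remark that the $k\in\{2,n\}$ cases are identical \emph{mutatis mutandis}, exactly as the even/odd split was handled in Proposition~\ref{grouppres_odd}.
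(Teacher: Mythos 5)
Your plan is correct and follows essentially the same route as the paper: identify $H(G)$ as an index-two subgroup via Proposition~\ref{bipartite_cond}, run Reidemeister--Schreier on the cosets $\{H,H\rho_0\}$ (the paper does this via the explicit Schreier coset graph with generators $s_{i,H},s_{i,H\rho_0}$ and the spanning-tree edge $s_{0,H}$), simplify the Coxeter relators to the $[\nfrac{3}{3},3^{n-3},4]$ presentation, and rewrite the extra relator $w^{ks}=(w^2)^{ks/2}$ to get the halved exponent, with the $k=1$ case done in detail and $k\in\{2,n\}$ handled mutatis mutandis. Only cosmetic slips: your $v$ for $k=1$ should be $\rho_1\rho_2\cdots\rho_n\cdots\rho_2\rho_1$ (you dropped the leading $\rho_1$), and the rewrite of $w^2$ is $\tilde\rho_0\rho_2\cdots\rho_n\cdots\rho_2\tilde\rho_0\,\rho_1\rho_2\cdots\rho_n\cdots\rho_2\rho_1$, which equals the square of the displayed relator only after using the commutation $(\tilde\rho_0\rho_1)^2=id$, so $w^s$ rewrites to the displayed word to the power $s$ (not $w^2$ to the power $s$).
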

\begin{proof}
    Let $H(\Aut(\Gamma)) := \langle \rho_0\rho_1\rho_0, \rho_1, \rho_2, \ldots, \rho_n\rangle = \langle \tilde{\rho}_0,\rho_1,\ldots,\rho_n\rangle $ be the halving group of automorphism group of the hypertope $\Gamma$. By Theorem~\ref{thm:main2} we have that $H(\Aut(\Gamma))=\Aut(H(\Gamma))$. Moreover, if either $k$ or $s$ is even, all relators of the group presentation of $\Aut(\Gamma)=G$ contain an even number of $\rho_0$'s and, by Proposition~\ref{bipartite_cond}, $H(\Aut(\Gamma))$ is a proper subgroup of $G$.

    Let us consider the case where $k=1$, $s$ is even and $H = H(\Aut(\Gamma))$. To determine the group presentation of the subgroup $H$ we will use the Reidemeister-Schreier subgroup presentation \cite{magnus2004combinatorial,conder1992schreier} using the Schreier coset graph of $H$. The subgroup $H$ is a subgroup of index $2$ of $\Aut(\Gamma)$, with the following distinct right cosets $\{H, H\rho_0\}$.
    We will construct the Schreier coset graph of $H$ in $\Aut(\Gamma)$, labelling the generator $s_{i,H}$ if the generator $\rho_i$ acts on the coset $H$ and $s_{i,H\rho_0}$ if the generator $\rho_i$ acts on the coset $H\rho_0$.

    $$\xymatrix@-0.7pc{
    *++[o][F-]{H}\ar@{->}@/^/[rrr]^{s_{0,H}}\ar@{<-}@/_/[rrr]_{s_{0,H\rho_0}}\ar@(l,ul)[]^{s_{j,H}} &&& *++[o][F-]{H\rho_0}\ar@(r,ur)[]_{s_{j,H\rho_0}}
    }$$
    In the graph above, consider $j\in\{1,\ldots,n\}$, as the action of $\rho_j$ fixes the cosets $\{H,H\rho_0\}$.

    We can rewrite the relators of $G$ using the above generators of the graph as written below:
    \begin{equation*}
   \begin{split}
    s_{0,H}s_{0,H\rho_0}& = id,\\
    (s_{j,H})^2 = id, (s_{j,H\rho_0})^2& = id, \textnormal{ (for $j\in\{1,\ldots,n\}$)},\\
    (s_{0,H}s_{1,H\rho_0}s_{0,H\rho_0}s_{1,H})^2 = id, (s_{0,H\rho_0}s_{1,H}s_{0,H}s_{1,H\rho_0})^2& = id,\\
    (s_{0,H}s_{i,H\rho_0}s_{0,H\rho_0}s_{i,H}) = id, (s_{0,H\rho_0}s_{i,H}s_{0,H}s_{i,H\rho_0})& = id, \textnormal{ (for $i\in\{2,\ldots,n\}$)},\\
    (s_{1,H}s_{2,H})^3 = id, (s_{1,H\rho_0}s_{2,H\rho_0})^3& = id, \\ 
    (s_{1,H}s_{p,H})^2 = id, (s_{1,H\rho_0}s_{p,H\rho_0})^2& = id, \textnormal{ (for $p\in\{3,\ldots,n\}$)},\\
    (s_{q,H}s_{q+1,H})^3 = id, (s_{q,H\rho_0}s_{q+1,H\rho_0})^3& = id, \textnormal{ (for $q\in\{2,\ldots,n-1\}$)}\\ 
    (s_{q,H}s_{p,H})^2 = id, (s_{q,H\rho_0}s_{p,H\rho_0})^2& = id, \textnormal{ (for $p,q\in\{2,\ldots,n\}$},\\
    & \textnormal{and $q\notin\{p-1,p+1\}$})\\
    (s_{0,H}s_{1,H\rho_0}s_{2,H\rho_0}\ldots s_{n,H\rho_0}\ldots s_{2,H\rho_0}s_{1,H\rho_0}& \\
    s_{0,H\rho_0}s_{1,H}s_{2,H}\ldots s_{n,H}\ldots s_{2,H}s_{1,H})^{s/2}& =id,\\
    (s_{0,H\rho_0}s_{1,H}s_{2,H}\ldots s_{n,H}\ldots s_{2,H}s_{1,H}&\\
    s_{0,H}s_{1,H\rho_0}s_{2,H\rho_0}\ldots s_{n,H\rho_0}\ldots s_{2,H\rho_0}s_{1,H\rho_0})^{s/2}&=id.\\
    \end{split}
    \end{equation*}

    Consider the spanning tree in the coset graph by choosing the edge labelled $s_{0,H}$. Hence, by the Reidemeister-Schreier algorithm, $s_{0,H}=id$, which leads to $s_{0,H\rho_0}=id$ and $s_{i,H}=s_{i,H\rho_0}$, for $i\in\{2,\ldots,n\}$. The remaining generators form a set of Schreier generators of $H$. By relabelling the generators $s_{i,H}=s_{i,H\rho_0}=:\rho_i$, for $i\in\{2,\ldots,n\}$, $\rho_1 := s_{1,H}$ and $\tilde{\rho}_0 := s_{1,H\rho_0}$ and using Tietze transformations to remove conjugate relators, we get to the presentation given in the statement of this proposition.

    The same can be done for $k=2$ and $k=n$ (with either $s$ or $n$ even) just by changing the last two relators written above with the respective ones using the generators on the graph.
    
\end{proof}
As a direct consequence of these presentation, we obtain the following.

\begin{coro}\label{bipartitenn-1trunc}
    $H(\Gamma)(0,1)$ has a bipartite $\{n,n-1\}$-truncation.
\end{coro}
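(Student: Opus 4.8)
The plan is to combine the explicit group presentations obtained in Propositions~\ref{grouppres_odd} and~\ref{grouppres_even} with the bipartiteness criterion of Proposition~\ref{bipartite_cond}, applied this time to the leaf $(n,n-1)$ instead of $(0,1)$. By Proposition~\ref{Bnhypertope}, $H(\Gamma)(0,1)$ is a regular hypertope, and by Proposition~\ref{b1b2forhalvedcubic} (or directly by Proposition~\ref{prop:b1b2}) its $(n,n-1)$-leaf is non-degenerate, so the statement of Proposition~\ref{bipartite_cond} applies verbatim with the roles of $\rho_0$ and $\rho_n$ exchanged: the $\{n,n-1\}$-truncation of $H(\Gamma)(0,1)$ is bipartite if and only if every relator in the standard presentation of $\mathrm{Aut}(H(\Gamma)(0,1))$ contains an even number of occurrences of $\rho_n$, the generator attached to the endpoint of that leaf.

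It then suffices to inspect the presentations of Propositions~\ref{grouppres_odd} and~\ref{grouppres_even}. In all four cases the defining relations are those of the Coxeter group $[\nfrac{3}{3},3^{n-3},4]=\langle\tilde{\rho}_0,\rho_1,\ldots,\rho_n\rangle$ together with a single extra relation. Every Coxeter relator is of the form $\rho_i^2$ or $(\rho_i\rho_j)^{m_{ij}}$, and such a word contains $\rho_n$ either zero times or $m_{i,n}$ times, with $m_{n-1,n}=4$ and $m_{i,n}=2$ for $i\le n-2$; in every case this count is even. For the extra relator: if $k=1$ and both $k,s$ are odd (Proposition~\ref{grouppres_odd}), the base word $\tilde{\rho}_0\rho_2\cdots\rho_{n-1}\rho_n\rho_{n-1}\cdots\rho_2\rho_1$ contains exactly one $\rho_n$ and is raised to the even power $2s$; if $k=n$ with $k,s$ odd (Proposition~\ref{grouppres_odd}), the base word contains exactly two $\rho_n$'s, contributing $2ns$; if $k=1$ with $s$ even (Proposition~\ref{grouppres_even}), the base word has one $\rho_n$ raised to the power $s$, which is even; and if $k\in\{2,n\}$ (Proposition~\ref{grouppres_even}), the base word has two $\rho_n$'s raised to the power $ks/2$, giving $ks$ occurrences, which is even because $k=2$ is even when $k=2$, and because we are not in the case ``$n$ and $s$ both odd'' when $k=n$. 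Thus every relator contains an even number of $\rho_n$'s, and the criterion above yields the claim.

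The argument is essentially bookkeeping, so there is no genuine obstacle; the only points deserving care are (i) confirming that the leaf analogue of Proposition~\ref{bipartite_cond} is legitimately available, i.e. that $H(\Gamma)(0,1)$ is a regular hypertope with a non-degenerate $(n,n-1)$-leaf whose automorphism group is presented as a quotient of a Coxeter group by the standard relations, and (ii) verifying that in the bipartite branch of Proposition~\ref{grouppres_even} the exponent times the number of $\rho_n$'s in the base word is always even, which reduces to the observation that the pairs $(k,s)$ occurring there are precisely those with $k$ or $s$ even.
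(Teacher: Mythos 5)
Your proposal is correct and follows exactly the paper's route: the paper also deduces the corollary directly from the presentations in Propositions~\ref{grouppres_odd} and~\ref{grouppres_even} together with the parity criterion of Proposition~\ref{bipartite_cond} applied to the generator at the $(n,n-1)$-leaf. You merely spell out the bookkeeping (counting occurrences of $\rho_n$ in the Coxeter relators and in each extra relator) that the paper leaves implicit, which is fine.
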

\begin{proof}
    This is a direct consequence of the group presentations of $H(\Gamma)(0,1)$ given in Propositions~\ref{grouppres_odd} and ~\ref{grouppres_even}, and Proposition~\ref{bipartite_cond}.
\end{proof}
Finally, we compute a presentation for the group of $H(\Gamma)(0,1)(n,n-1)$.
\begin{prop}
    Let $n>4$, $\tilde{\rho}_0 = \rho_0\rho_1\rho_0$ and $\tilde{\rho}_n = \rho_n\rho_{n-1}\rho_n$. Then the automorphism group of $H(\Gamma)(0,1)(n,n-1)$ is the quotient of the Coxeter group $[\nfrac{3}{3},3^{n-4},\nfrac{3}{3}] = \langle \tilde{\rho}_0, \rho_1,\ldots, \rho_{n-1}, \tilde{\rho}_n\rangle$ by the following extra relation
    $$ (\tilde{\rho}_0\rho_2\rho_3\ldots \rho_{n-1}\tilde{\rho}_n\rho_{n-2}\rho_{n-3}\ldots\rho_2\rho_1)^{2s} = id $$
    for $k=1$ and $s$ odd;
    $$ (\tilde{\rho}_0\rho_2\ldots\rho_{n-1}\tilde{\rho}_n\rho_{n-2}\ldots\rho_2\rho_1)^{s} = id $$
    for $k=1$ and $s$ even, 
    $$ (\tilde{\rho}_0\rho_2\ldots\rho_{n-1}\tilde{\rho}_n\rho_{n-2}\ldots\rho_{3}\rho_2\rho_1\rho_2\ldots\rho_{n-1}\tilde{\rho}_n\rho_{n-2}\ldots\rho_{3}\rho_2)^{s}=id$$
    for $k = 2$,
    $$ (\tilde{\rho}_0\rho_2\ldots\rho_{n-2}\rho_{n-1}\rho_1\rho_2\ldots\rho_{n-2}\tilde{\rho}_n)^{ns}=id$$
    for $k=n$ and both $s$ and $n$ odd, and
    $$ (\tilde{\rho}_0\rho_2\ldots\rho_{n-2}\rho_{n-1}\rho_1\rho_2\ldots\rho_{n-2}\tilde{\rho}_n)^{ns/2}=id$$
    for $k = n$ with either $s$ or $n$ even.
\end{prop}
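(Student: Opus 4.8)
The plan is to obtain the presentation by applying the halving construction a second time and propagating group presentations through Tietze transformations and the Reidemeister--Schreier rewriting process, exactly as was done for $H(\Gamma)(0,1)$ in Propositions~\ref{grouppres_odd} and~\ref{grouppres_even}. The starting point is that, by those propositions, $\Aut(H(\Gamma)(0,1))$ is the Coxeter group $[\nfrac{3}{3},3^{n-3},4]=\langle\tilde\rho_0,\rho_1,\ldots,\rho_n\rangle$ factored by a single explicit extra relator, whose exponent is $2s$, $s$, $s$, $ns$ or $ns/2$ according to $k$ and the parities of $s$ (and of $n$ when $k=n$). By Proposition~\ref{b1b2forhalvedcubic}, $H(\Gamma)(0,1)$ satisfies $(B_1)$ and $(B_2)$ for its $(n,n-1)$-leaf, so Theorem~\ref{thm:main2} applies to it; and by Corollary~\ref{bipartitenn-1trunc}, the $\{n,n-1\}$-truncation of $H(\Gamma)(0,1)$ is bipartite. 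Hence the second halving is always in the bipartite regime: by Proposition~\ref{bipartite_cond} and Theorem~\ref{thm:main2}, $\Aut\bigl(H(\Gamma)(0,1)(n,n-1)\bigr)$ is the index-two subgroup $H':=\langle\tilde\rho_0,\rho_1,\ldots,\rho_{n-1},\tilde\rho_n\rangle$ of $\Aut(H(\Gamma)(0,1))$, with $\tilde\rho_n:=\rho_n\rho_{n-1}\rho_n$, consisting of the elements whose words contain an even number of $\rho_n$'s. (The hypothesis $n>4$ is what guarantees that the two tail-triangles $\{\tilde\rho_0,\rho_1,\rho_2\}$ and $\{\rho_{n-2},\rho_{n-1},\tilde\rho_n\}$ are disjoint, so the target diagram $[\nfrac{3}{3},3^{n-4},\nfrac{3}{3}]$ is as claimed.)

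Next I would compute the presentation of $H'$ by the Reidemeister--Schreier algorithm, mirroring the proof of Proposition~\ref{grouppres_even}. Take the two right cosets $\{H',H'\rho_n\}$ and form the Schreier coset graph: the generator $\rho_n$ gives a pair of arcs swapping the two vertices, while $\tilde\rho_0$ and every $\rho_j$ with $j\le n-1$ fix both cosets and give loops at each vertex. Rewrite every Coxeter relator of $[\nfrac{3}{3},3^{n-3},4]$, and the extra relator, in terms of the Schreier generators $s_{j,H'},s_{j,H'\rho_n}$; choose the spanning tree containing the arc $s_{n,H'}$, so that $s_{n,H'}=s_{n,H'\rho_n}=\mathrm{id}$ and, from the relators $(\rho_n\rho_j)^2=\mathrm{id}$, $s_{j,H'}=s_{j,H'\rho_n}$ for $j\le n-2$, while $(\rho_{n-1}\rho_n)^4=\mathrm{id}$ collapses to $(s_{n-1,H'}\,s_{n-1,H'\rho_n})^2=\mathrm{id}$, leaving $s_{n-1,H'}$ and $s_{n-1,H'\rho_n}$ as two distinct commuting involutions. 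Relabelling $\rho_j:=s_{j,H'}=s_{j,H'\rho_n}$ for $j\le n-2$, $\tilde\rho_0:=s_{\tilde 0,H'}=s_{\tilde 0,H'\rho_n}$, $\rho_{n-1}:=s_{n-1,H'}$ and $\tilde\rho_n:=s_{n-1,H'\rho_n}$, and discarding relators that are conjugates of others, the Coxeter relators of $[\nfrac{3}{3},3^{n-3},4]$ reduce to precisely those of $[\nfrac{3}{3},3^{n-4},\nfrac{3}{3}]$ (this is where $n>4$ is used again, to make sure the relations around $\rho_{n-1}$ only produce the single new tail-triangle and do not collide with the $\tilde\rho_0$-triangle).

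It then remains to rewrite the one extra relator. Writing it as $w^m$ with $w$ a cyclic word and $m\in\{2s,s,ns,ns/2\}$ as supplied by Propositions~\ref{grouppres_odd}/\ref{grouppres_even}, one notes (consistently with Corollary~\ref{bipartitenn-1trunc}) that $w^m$ contains an even number of $\rho_n$'s, so it is a closed walk at $H'$ and yields a genuine relator of $H'$. Tracing that walk, each $\rho_n$ is traversed from one coset and deleted by the tree substitution, and every $\rho_{n-1}$ traversed while ``at'' the coset $H'\rho_n$ is replaced by $\tilde\rho_n$ rather than $\rho_{n-1}$; since $\rho_{n-1}$ and $\tilde\rho_n$ commute in the quotient, the resulting word can be massaged (by those commutations together with the long-distance commutations $(\rho_i\rho_j)^2=\mathrm{id}$, $|i-j|\ge2$) into a power of the advertised period. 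For $k=1$ the period is the ``there-and-back'' word $\tilde\rho_0\rho_2\cdots\rho_{n-1}\tilde\rho_n\rho_{n-2}\cdots\rho_2\rho_1$ and the exponent carries over unchanged ($2s$ or $s$); for $k\in\{2,n\}$ the word wraps around through $\rho_1$ and contains two $\rho_n$'s per period, producing the two copies of $\tilde\rho_n$ in the stated relator, again with the exponent unchanged. A final round of Tietze transformations puts it in normal form.

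\textbf{Main obstacle.} The structural steps are routine given that $H(\Gamma)(0,1)(n,n-1)$ is already known to be a regular hypertope (Theorems~\ref{thm:main1} and~\ref{thm:main2}); the real work is the bookkeeping of the long relator — tracking precisely which cosets each letter is traversed from, making sure each $\rho_{n-1}\mapsto\tilde\rho_n$ substitution is applied exactly where warranted, correctly using $[\rho_{n-1},\tilde\rho_n]=\mathrm{id}$ to identify the two ``halves'' of $w^2$ in the $k=1$ case, and verifying the exponent in each of the five cases without an off-by-a-factor-of-two slip. The case $k=n$, which additionally inherits the parity-of-$n$ distinction from Proposition~\ref{grouppres_odd} and whose relator wraps through $\rho_1$, is the most delicate to check.
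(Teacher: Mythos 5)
Your proposal is correct and follows essentially the same route as the paper: the paper likewise combines Corollary~\ref{bipartitenn-1trunc}, Proposition~\ref{b1b2forhalvedcubic} and Proposition~\ref{bipartite_cond} to place the second halving in the bipartite (index-two) regime and then invokes the Reidemeister--Schreier rewriting exactly as in the proof of Proposition~\ref{grouppres_even}. Your write-up is in fact more detailed than the paper's own proof, which leaves the coset-graph bookkeeping and the rewriting of the long relator implicit.
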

\begin{proof}
    By Corollary~\ref{bipartitenn-1trunc}, we have that $H(\Gamma)(0,1)$ has a bipartite $\{n,n-1\}$-truncation. 
    Moreover, by Proposition~\ref{b1b2forhalvedcubic} we have that the $(n,n-1)$-leaf satisfies $(B_1)$ and $(B_2)$ in $H(\Gamma)(0,1)$.
    Hence, by Proposition~\ref{bipartite_cond} we have that the automorphism group of $H(\Gamma)(0,1)(n,n-1)$ will be a proper subgroup of the automorphism group of $H(\Gamma)(0,1)$. Hence, we can use the Reidemeister-Schreier subgroup presentation as shown above to arrive to the results in the statement of the proposition.
\end{proof}

For $n\in\{3,4\}$, consider the cases below.

\begin{prop}
    Let $n=3$, $\tilde{\rho}_0 = \rho_0\rho_1\rho_0$ and $\tilde{\rho}_n = \rho_n\rho_{n-1}\rho_n$. Then the automorphism group of $H(\Gamma)(0,1)(n,n-1)$ is the quotient of the Coxeter group $[(3,3,3,3)] = \langle \tilde{\rho}_0, \rho_1,\rho_2, \tilde{\rho}_3\rangle$ by the following extra relation
    $$ (\tilde{\rho}_0\rho_2\tilde{\rho}_3\rho_1)^{2s} = id $$
    for $k=1$ and $s$ odd;
    $$ (\tilde{\rho}_0\rho_2\tilde{\rho}_3\rho_1)^{s} = id $$
    for $k=1$ and $s$ even, 
    $$ (\tilde{\rho}_0\rho_2\rho_1\tilde{\rho}_3\rho_1\rho_2)^{s}=id$$
    for $k = 2$,
    $$ (\tilde{\rho}_0\rho_2\rho_1\tilde{\rho}_3)^{3s}=id$$
    for $k=3$ and $s$ odd, and
    $$ (\tilde{\rho}_0\rho_2\rho_1\tilde{\rho}_3)^{3s/2}=id$$
    for $k = 3$ and $s$ even.
\end{prop}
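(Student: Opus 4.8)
The plan is to reduce the statement to a group‑presentation computation and run Reidemeister--Schreier, exactly along the lines of the proofs of Propositions~\ref{grouppres_odd} and~\ref{grouppres_even} and of the preceding proposition (the case $n>4$). Write $G=\Aut(\Gamma)$ with $\Gamma=\{4,3,4\}_{\bf s}$, and assume, as in Proposition~\ref{b1b2forhalvedcubic}, that ${\bf s}\notin\{(2,0,0),(2,2,0)\}$, so that both halvings are defined. Applying Theorem~\ref{thm:main2} to $\Gamma$ at its $(0,1)$-leaf gives $\Aut(H(\Gamma)(0,1))\cong H(G)(0,1)$, and applying it again to $H(\Gamma)(0,1)$ at its $(3,2)$-leaf (legitimate by Proposition~\ref{b1b2forhalvedcubic}, which guarantees $(B_1)$ and $(B_2)$ there) gives $\Aut(H(\Gamma)(0,1)(3,2))\cong H(H(G)(0,1))(3,2)$. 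By Corollary~\ref{bipartitenn-1trunc} the $\{3,2\}$-truncation of $H(\Gamma)(0,1)$ is bipartite, so by Proposition~\ref{bipartite_cond} (at the $(3,2)$-leaf) this iterated halving group is a subgroup of index $2$ in $\Aut(H(\Gamma)(0,1))$, a presentation of which is provided by Proposition~\ref{grouppres_odd} when $k=1$ and $s$ is odd, and by Proposition~\ref{grouppres_even} in all other cases.

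Then I would run Reidemeister--Schreier on the index-$2$ subgroup $H=\langle\tilde\rho_0,\rho_1,\rho_2,\tilde\rho_3\rangle$ with $\tilde\rho_3=\rho_3\rho_2\rho_3$, using the Schreier transversal $\{1,\rho_3\}$: the coset graph has two vertices, $\rho_3$ is the only switching generator, and choosing the spanning-tree edge labelled $\rho_3$ kills that generator and identifies the two copies of each other generator, leaving Schreier generators that we relabel $\tilde\rho_0,\rho_1,\rho_2,\tilde\rho_3$. The only point specific to $n=3$ concerning the diagram is that, in $D(H(\Gamma)(0,1))=[\nfrac{3}{3},4]$, the node $\rho_2$ has exactly two neighbours besides $\rho_3$, namely $\tilde\rho_0$ and $\rho_1$; after halving, $\tilde\rho_3$ becomes non-adjacent to $\rho_2$ and adjacent to both $\tilde\rho_0$ and $\rho_1$, so the Y--Y diagram $[\nfrac{3}{3},3^{n-4},\nfrac{3}{3}]$ of the $n>4$ case degenerates into the $4$-cycle $[(3,3,3,3)]$ on $\tilde\rho_0-\rho_2-\rho_1-\tilde\rho_3-\tilde\rho_0$. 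For the single extra relator, I would take the relator $w^m$ of $\Aut(H(\Gamma)(0,1))$ (with $m=2s,s,s,3s,3s/2$ in the five cases, read off from Propositions~\ref{grouppres_odd}--\ref{grouppres_even}), and rewrite the base word $w$ using $\rho_2\rho_3=\rho_3\tilde\rho_3$, $\rho_3\tilde\rho_3\rho_3=\rho_2$, the commutations $(\rho_1\rho_3)^2=(\tilde\rho_0\rho_3)^2=id$, and the centrality of $(\rho_2\rho_3)^2=\rho_2\tilde\rho_3$ inside $\langle\rho_2,\rho_3\rangle$, to get a word $v$ in $\tilde\rho_0,\rho_1,\rho_2,\tilde\rho_3$ with $w^m=v^m$; the Coxeter relations of $[(3,3,3,3)]$, in particular $(\rho_2\tilde\rho_3)^2=id$ (used for $k=1$) and $(\rho_1\tilde\rho_3)^3=id$ (used for $k=2$), then bring $v$ into the displayed normal form $\tilde\rho_0\rho_2\tilde\rho_3\rho_1$, $\tilde\rho_0\rho_2\rho_1\tilde\rho_3\rho_1\rho_2$ or $\tilde\rho_0\rho_2\rho_1\tilde\rho_3$.

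I expect the main obstacle to be this last bookkeeping, not the reduction. One must track the exponent correctly: it is $2s$ or $s$ for $k=1$ (where the base word $w$ contains a single $\rho_3$, so $w\notin H$ and the exponent coming from Proposition~\ref{grouppres_odd}/\ref{grouppres_even} is automatically even), and $3s$ or $3s/2$ for $k=3$ (where $w$ already contains two $\rho_3$'s, so $w\in H$ and no parity constraint arises). The delicate case is $k=2$: the relator of $\Aut(H(\Gamma)(0,1))$ carries two copies of the palindromic block $\rho_2\rho_3\rho_2$, and the naive rewrite produces $\tilde\rho_0\rho_2\tilde\rho_3\rho_1\tilde\rho_3\rho_2$, which one recognises as a conjugate of $(\tilde\rho_0\rho_2\rho_1\tilde\rho_3\rho_1\rho_2)^{s}$ only after invoking $\tilde\rho_3\rho_1\tilde\rho_3=\rho_1\tilde\rho_3\rho_1$; I would double-check each case by an independent count of the occurrences of $\rho_3$, mirroring the parity analysis of Proposition~\ref{bipartite_cond}. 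Finally, completeness of the presentation is automatic: Reidemeister--Schreier yields a presentation of the subgroup, and every Tietze transformation used is reversible, so no relator is lost.
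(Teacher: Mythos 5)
Your proposal follows essentially the same route the paper itself (implicitly) uses for the $n=3$ case: iterate the halving, invoke Corollary~\ref{bipartitenn-1trunc}, Proposition~\ref{b1b2forhalvedcubic} and Proposition~\ref{bipartite_cond} to see that the second halving group has index two in $\Aut(H(\Gamma)(0,1))$, then run Reidemeister--Schreier with transversal $\{1,\rho_3\}$ and rewrite the single extra relator, and your exponent bookkeeping together with the collapse of the diagram to the $4$-cycle $[(3,3,3,3)]$ (via $(\rho_2\tilde{\rho}_3)^2=id$ for $k=1$ and $\tilde{\rho}_3\rho_1\tilde{\rho}_3=\rho_1\tilde{\rho}_3\rho_1$ for $k=2$) is correct. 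One small slip: for $k=3$ and $s$ odd both $k$ and $s$ are odd, so the presentation of $\Aut(H(\Gamma)(0,1))$ comes from Proposition~\ref{grouppres_odd} (exponent $ns=3s$), not from Proposition~\ref{grouppres_even} as your opening paragraph asserts --- your later computation already uses the correct exponent $3s$, so nothing else in the argument changes.
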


\begin{prop}
    Let $n=4$, $\tilde{\rho}_0 = \rho_0\rho_1\rho_0$ and $\tilde{\rho}_n = \rho_n\rho_{n-1}\rho_n$. Then the automorphism group of $H(\Gamma)(0,1)(n,n-1)$ is the quotient of the Coxeter group $[3^{1,1,1,1}] = \langle \tilde{\rho}_0, \rho_1,\rho_2, \rho_3, \tilde{\rho}_4\rangle$ by the following extra relation
    $$ (\tilde{\rho}_0\rho_2\rho_3\tilde{\rho}_4\rho_2\rho_1)^{2s} = id $$
    for $k=1$ and $s$ odd;
    $$ (\tilde{\rho}_0\rho_2\rho_3\tilde{\rho}_4\rho_2\rho_1)^{s} = id $$
    for $k=1$ and $s$ even, 
    $$ (\tilde{\rho}_0\rho_2\rho_3\tilde{\rho}_4\rho_2\rho_1\rho_2\rho_3\tilde{\rho}_4\rho_2)^{s}=id$$
    for $k = 2$, and
    $$ (\tilde{\rho}_0\rho_2\rho_3\rho_1\rho_2\tilde{\rho}_4)^{4s/2}=id$$
    for $k = 4$.
\end{prop}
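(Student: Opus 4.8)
The plan is to follow the same route as in the proof of the preceding proposition (the case $n>4$), specialising to $n=4$, where the Coxeter diagram $[\nfrac{3}{3},3^{n-4},\nfrac{3}{3}]$ collapses to $[3^{1,1,1,1}]$: the triangle on $\{\tilde\rho_0,\rho_1,\rho_2\}$ and the triangle on $\{\rho_2,\rho_3,\tilde\rho_4\}$ then share the central node $\rho_2$. First I would record the three structural inputs. By Proposition~\ref{Bnhypertope} and Proposition~\ref{b1b2forhalvedcubic} (applicable since $\mathbf{s}\notin\{(2,0^{n-1}),(2,2,0^{n-2})\}$), $H(\Gamma)(0,1)$ is a regular hypertope whose $(n,n-1)$-leaf is non-degenerate, that is, satisfies $(B_1)$ and $(B_2)$ for $\{n,n-1\}$; by Corollary~\ref{bipartitenn-1trunc} its $\{n,n-1\}$-truncation is bipartite; hence Proposition~\ref{bipartite_cond}, applied to the hypertope $H(\Gamma)(0,1)$ with this leaf, shows that the halving group $H(\Aut(H(\Gamma)(0,1)))(n,n-1)$ is a subgroup of index exactly $2$ in $\Aut(H(\Gamma)(0,1))$, namely the kernel of the parity-of-$\rho_n$ homomorphism, and Theorem~\ref{thm:main2} (applied to $H(\Gamma)(0,1)$) identifies that subgroup with $\Aut\big(H(\Gamma)(0,1)(n,n-1)\big)$.

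Next I would compute a presentation of this index-$2$ subgroup by the Reidemeister--Schreier procedure, exactly as in the proof of Proposition~\ref{grouppres_even}. The ambient group $\Aut(H(\Gamma)(0,1))$ has the presentation obtained in Proposition~\ref{grouppres_odd} when $k=1$ and $s$ is odd (so $\Gamma[0,1]$ is non-bipartite) and the one in Proposition~\ref{grouppres_even} when $k=1$ and $s$ is even, or $k\in\{2,n\}$. One takes the transversal $\{H,H\rho_n\}$, where $H=\langle\tilde\rho_0,\rho_1,\ldots,\rho_{n-1},\tilde\rho_n\rangle$ with $\tilde\rho_n=\rho_n\rho_{n-1}\rho_n$; in the Schreier coset graph, $\rho_n$ swaps the two cosets while every other generator fixes both. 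Choosing the spanning tree through the $s_{n,H}$-edge forces $s_{n,H}=s_{n,H\rho_n}=\mathrm{id}$ and $s_{i,H}=s_{i,H\rho_n}$ for $i\notin\{n-1,n\}$, whereas $s_{n-1,H}$ and $s_{n-1,H\rho_n}$ remain distinct and are renamed $\rho_{n-1}$ and $\tilde\rho_n$. Rewriting the two coset copies of each defining relator and removing conjugate copies by Tietze transformations, the Coxeter relators yield precisely the $[3^{1,1,1,1}]$ relations: for instance $(\rho_{n-1}\rho_n)^4=\mathrm{id}$ becomes $(\rho_{n-1}\tilde\rho_n)^2=\mathrm{id}$ (no edge between $\rho_{n-1}$ and $\tilde\rho_n$), the $H\rho_n$-copy of $(\rho_{n-2}\rho_{n-1})^3=\mathrm{id}$ becomes $(\rho_{n-2}\tilde\rho_n)^3=\mathrm{id}$ (for $n=4$, the edge $\rho_2$--$\tilde\rho_4$), and the relators pairing $\rho_n$ with a non-adjacent generator collapse to relations already present; meanwhile the single extra toroidal relator is transformed, after a case split on $k\in\{1,2,4\}$ and the parity of $s$, into the relator displayed in the statement. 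Since $n=4$ is even, the subcase ``$k=n$ with both $s$ and $n$ odd'' of the $n>4$ proposition cannot occur, which is why only the exponent $ns/2=2s$ survives for $k=4$; and there is no $k=3$ case because $k\in\{1,2,n\}$.

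The only genuinely delicate step is tracking the extra toroidal relator through the coset-graph substitution in each of the cases $k=1$ ($s$ odd), $k=1$ ($s$ even), $k=2$ and $k=4$ — for example turning $(\tilde\rho_0\rho_2\rho_3\rho_4\rho_3\rho_2\rho_1)^{2s}$, inherited from Proposition~\ref{grouppres_odd} with $n=4$, $k=1$, $s$ odd, into $(\tilde\rho_0\rho_2\rho_3\tilde\rho_4\rho_2\rho_1)^{2s}$. This is routine word manipulation of exactly the kind already carried out in Propositions~\ref{grouppres_odd} and~\ref{grouppres_even}, using that $\tilde\rho_0=\rho_0\rho_1\rho_0$ and $\tilde\rho_4=\rho_4\rho_3\rho_4$ are conjugates of $\rho_1$ and $\rho_3$; all the remaining steps are a direct transcription of the argument for $n>4$.
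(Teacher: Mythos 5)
Your proposal is correct and takes essentially the same route as the paper, which states the $n=4$ case without a separate argument and relies on exactly the chain you spell out: Corollary~\ref{bipartitenn-1trunc} (bipartite $\{n,n-1\}$-truncation), Proposition~\ref{b1b2forhalvedcubic} (non-degenerate $(n,n-1)$-leaf), Proposition~\ref{bipartite_cond} together with Theorem~\ref{thm:main2} (index-two halving subgroup is $\Aut(H(\Gamma)(0,1)(n,n-1))$), and then the Reidemeister--Schreier rewriting of the presentations from Propositions~\ref{grouppres_odd} and~\ref{grouppres_even}, with the correct observation that $k\in\{1,2,4\}$ and $n=4$ even leaves only the exponent $ns/2=2s$ for $k=4$. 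One cosmetic slip: the pairs $\{\tilde{\rho}_0,\rho_1\}$ and $\{\rho_3,\tilde{\rho}_4\}$ attached to $\rho_2$ form forks, not triangles (there is no edge inside either pair), as your own later relations $(\tilde{\rho}_0\rho_1)^2=(\rho_3\tilde{\rho}_4)^2=id$ confirm.
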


\subsection{Diagrams of the halving geometries}\label{subsec:diagrams}\hfill
\\

From the previous results, we have that, starting with any polytope $\Gamma = \{4,3^{n-2},4\}_\textbf{s}$ where $\textbf{s} \neq (2,0^{n-1})$, we can apply the halving geometry once on each side of the diagram of $\Gamma$ for $n\geq 3$.
For $n\geq 5$ and $\textbf{s} \notin \{(2,0^{n-1}),(2,2,0^{n-2})\}$, we apply the halving geometry twice on each side of the diagram of $\Gamma$. In the following figures, we show the different diagrams obtained, up to duality, by successively applying the halving geometry to the leafs of the cubic toroids or their resulting geometries, for $n=3$ (Figure~\ref{fig:434}), $n=4$ (Figure~\ref{fig:4334}), $n=5$ (Figure~\ref{fig:43334}) and $n\geq 6$ (Figure~\ref{fig:43...34}).
It is not hard to check that all the halving operations can indeed be performed, even in the cases in which we did not explicitly prove that the leafs are still non-degenerate, even after multiple applications of the halving operation.

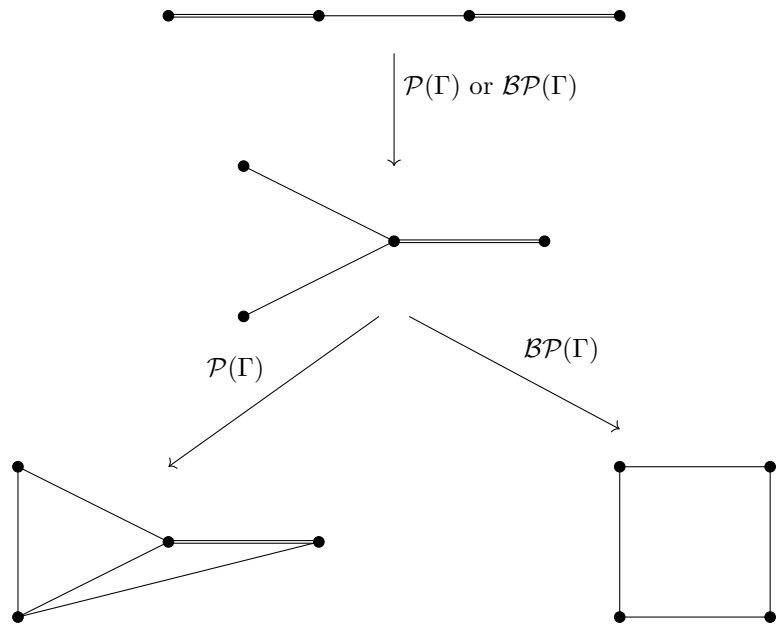
\begin{figure}[h!]
    \centering
    \begin{tikzpicture}
        \draw[double] (-2,0) -- (0,0);
        \draw (0,0) -- (2,0);
        \draw[double] (2,0) -- (4,0);
        \filldraw[black](0,0) circle(2pt);
        \filldraw[black](2,0) circle(2pt);
        \filldraw[black](-2,0) circle(2pt);
        \filldraw[black](4,0) circle(2pt);
        
        \draw[->] (1,-0.5)--(1,-2) node[midway,above right] {$\PP(\Gamma)$ or $\BP(\Gamma)$};
        
        \draw (-1,-2)--(1,-3) -- (-1,-4);
        \draw[double] (1,-3) -- (3,-3);
        \filldraw[black](1,-3) circle(2pt);
        \filldraw[black](3,-3) circle(2pt);
        \filldraw[black](-1,-2) circle(2pt);
        \filldraw[black](-1,-4) circle(2pt);

        \draw[->] (1.2,-4)--(4,-5.5) node[midway,above right] {$\BP(\Gamma)$};

        \draw[->] (0.8,-4)--(-2,-6) node[midway,above left] {$\PP(\Gamma)$};
        
        \begin{scope}[shift = {(-3,-4)}]
            \draw (-1,-2)--(1,-3) -- (-1,-4);
            \draw (-1,-2)--(-1,-4)--(3,-3);
            \draw[double] (1,-3) -- (3,-3);
            \filldraw[black](1,-3) circle(2pt);
            \filldraw[black](3,-3) circle(2pt);
            \filldraw[black](-1,-2) circle(2pt);
            \filldraw[black](-1,-4) circle(2pt);
        \end{scope}
        \begin{scope}[shift = {(+4,-6)}]
            \draw (0,-0)--(2,0) -- (2,-2) --(0,-2)--(0,0);
            \filldraw[black](0,0) circle(2pt);
            \filldraw[black](2,0) circle(2pt);
            \filldraw[black](2,-2) circle(2pt);
            \filldraw[black](0,-2) circle(2pt);
        \end{scope}
    \end{tikzpicture}
    \caption{The diagrams of the regular hypertopes obtained from the polyhedra of type $\{4,3,4\}_\textbf{s}$ applying the halving geometry to its leafs.}
    \label{fig:434}
\end{figure}

\begin{figure}
    \centering
    \begin{tikzpicture}
        \draw[double] (-4,0) -- (-2,0);
        \draw (-2,0)--(0,0) -- (2,0);
        \draw[double] (2,0) -- (4,0);
        \filldraw[black](0,0) circle(2pt);
        \filldraw[black](2,0) circle(2pt);
        \filldraw[black](-2,0) circle(2pt);
        \filldraw[black](-4,0) circle(2pt);
        \filldraw[black](4,0) circle(2pt);

        \draw[->] (0,-0.5)--(0,-2) node[midway,above right] {$\PP(\Gamma)$ or $\BP(\Gamma)$};
        
        \begin{scope}[shift = {(-2,0)}]
        \draw (-1,-2)--(1,-3) -- (-1,-4);
        \draw (1,-3)--(3,-3);
        \draw[double](3,-3)--(5,-3);
        \filldraw[black](1,-3) circle(2pt);
        \filldraw[black](3,-3) circle(2pt);
        \filldraw[black](-1,-2) circle(2pt);
        \filldraw[black](-1,-4) circle(2pt);
        \filldraw[black](5,-3) circle(2pt);
        \end{scope}

        \draw[->] (0.2,-4)--(3.5,-5.5) node[midway,above right] {$\BP(\Gamma)$};

        \draw[->] (-0.2,-4)--(-2,-6) node[midway,above left] {$\PP(\Gamma)$};
        
        \begin{scope}[shift = {(-5,-4.5)}]
            \draw (1,-1)--(1,-3);
            \draw (1,-1)--(-1,-3);
            \draw (1,-1)--(3,-3);
            \draw(-1,-3)-- (1,-3) -- (3,-3);
            \draw[double](3,-3)--(5,-3);
            \filldraw[black](1,-3) circle(2pt);
            \filldraw[black](3,-3) circle(2pt);
            \filldraw[black](-1,-3) circle(2pt);
            \filldraw[black](5,-3) circle(2pt);
            \filldraw[black](1,-1) circle(2pt);
        \end{scope}
        \begin{scope}[shift = {(0,-10)}]
            \draw (0,-0)--(2,0)--(0,-2);
            \draw (0,0) --(2,-2) --(0,-2)--(0,0)--(-2,-2);
            \draw (0,-2)--(-2,-2);
            \filldraw[black](0,0) circle(2pt);
            \filldraw[black](2,0) circle(2pt);
            \filldraw[black](2,-2) circle(2pt);
            \filldraw[black](0,-2) circle(2pt);
            \filldraw[black](-2,-2) circle(2pt);
        \end{scope}

        \draw[->] (4,-8)--(2.5,-9.5) node[midway,above left] {$\PP(\Gamma)$};

        \draw[->] (-2,-8)--(-0.5,-9.5) node[midway,above right] {$\BP(\Gamma)$};

        \begin{scope}[shift = {(+3,-4)}]
            \draw (-1,-2)--(1,-3)--(-1,-4);
            \draw(3,-2)--(1,-3)--(3,-4);
            \filldraw[black](1,-3) circle(2pt);
            \filldraw[black](3,-2) circle(2pt);
            \filldraw[black](-1,-2) circle(2pt);
            \filldraw[black](-1,-4) circle(2pt);
            \filldraw[black](3,-4) circle(2pt);
        \end{scope}
    \end{tikzpicture}
    \caption{The diagrams of the regular hypertopes obtained from the polytope of type $\{4,3,3,4\}_\textbf{s}$.}
    \label{fig:4334}
\end{figure}

\begin{figure}
    \centering
    \begin{tikzpicture}
        \draw[double] (-4,0)--(-2,0);
        \draw (-2,0)--(0,0)--(2,0)--(4,0);
        \draw[double] (4,0)--(6,0);
        \filldraw[black](0,0) circle(2pt);
        \filldraw[black](2,0) circle(2pt);
        \filldraw[black](-2,0) circle(2pt);
        \filldraw[black](-4,0) circle(2pt);
        \filldraw[black](4,0) circle(2pt);
        \filldraw[black](6,0) circle(2pt);

        \draw[->] (1,-0.5)--(1,-2) node[midway,above right] {$\PP(\Gamma)$ or $\BP(\Gamma)$};
        
        \begin{scope}[shift = {(-1,-3)}]
            \draw (-2,1)--(0,0);
            \draw (-2,-1)--(0,0);
            \draw (0,0)--(2,0)--(4,0);
            \draw[double] (4,0)--(6,0);
            \filldraw[black](0,0) circle(2pt);
            \filldraw[black](2,0) circle(2pt);
            \filldraw[black](-2,1) circle(2pt);
            \filldraw[black](-2,-1) circle(2pt);
            \filldraw[black](4,0) circle(2pt);
            \filldraw[black](6,0) circle(2pt);
        \end{scope}

        \draw[->] (1,-4)--(1,-5) node[midway,above right]{$\BP(\Gamma)$};
        \draw[->,out=215,in=135]  (-4,-3) to node[midway,above right]{$\PP(\Gamma)$} (-4,-8) ;
        
        \begin{scope}[shift = {(0,-6)}]
            \draw (-2,1)--(0,0);
            \draw (-2,-1)--(0,0);
            \draw (0,0)--(2,0);
            \draw(4,1)-- (2,0) -- (4,-1);
            \filldraw[black](0,0) circle(2pt);
            \filldraw[black](2,0) circle(2pt);
            \filldraw[black](-2,1) circle(2pt);
            \filldraw[black](-2,-1) circle(2pt);
            \filldraw[black](4,1) circle(2pt);
            \filldraw[black](4,-1) circle(2pt);
            
        \end{scope}

    \draw[->,out=-45,in=45]  (5,-6) to node[midway,above right]{$\PP(\Gamma)$} (5,-13) ;
    
        \begin{scope}[shift = {(-4,-7)}]
            \draw (3,-1)--(3,-3)--(1,-3);
            \draw (5,-3)--(3,-1)--(1,-3);
            \draw (3,-1)--(3,-3);
            \draw (3,-3)--(5,-3)--(7,-3);
            \draw[double] (7,-3)--(9,-3);
            \filldraw[black](1,-3) circle(2pt);
            \filldraw[black](3,-3) circle(2pt);
            \filldraw[black](5,-3) circle(2pt);
            \filldraw[black](3,-1) circle(2pt);
            \filldraw[black](7,-3) circle(2pt);
            \filldraw[black](9,-3) circle(2pt);
        \end{scope}
        
           \draw[->] (1,-11)--(1,-12) node[midway,above right] {$\BP(\Gamma)$};
        
        \begin{scope}[shift = {(-3,-11)}]
            \draw (2,-1)--(2,-3);
            \draw (2,-1)--(0,-3);
            \draw (2,-1)--(4,-3);
            \draw (0,-3)-- (2,-3) -- (4,-3);
            \draw (6,-4)--(4,-3)--(6,-2);
            \filldraw[black](2,-3) circle(2pt);
            \filldraw[black](4,-3) circle(2pt);
            \filldraw[black](0,-3) circle(2pt);
            \filldraw[black](2,-1) circle(2pt);
            \filldraw[black](6,-4) circle(2pt);
            \filldraw[black](6,-2) circle(2pt);
            
        \end{scope}

          \draw[->] (1,-15)--(1,-16) node[midway,above right] {$\PP(\Gamma)$};
          
        \begin{scope}[shift = {(-3,-16)}]
            \draw (3,-1)--(3,-3);
            \draw (3,-1)--(3,-3);
            \draw (5,-1)--(3,-1)--(1,-3);
            \draw (3,-1)--(5,-3);
            \draw (1,-3)-- (3,-3) -- (5,-3);
            \draw (3,-3)--(5,-3)--(7,-3);
            \draw (5,-1)--(7,-3);
            \draw (5,-1)--(3,-3);
            \draw (5,-1)--(5,-3);
            \filldraw[black](1,-3) circle(2pt);
        \filldraw[black](3,-3) circle(2pt);
            \filldraw[black](5,-3) circle(2pt);
            \filldraw[black](3,-1) circle(2pt);
            \filldraw[black](7,-3) circle(2pt);
            \filldraw[black](5,-1) circle(2pt);
        \end{scope}

    \end{tikzpicture}
    \caption{The diagrams of the regular hypertopes obtained from the polytope of type $\{4,3,3,3,4\}_\textbf{s}$.}
    \label{fig:43334}
\end{figure}

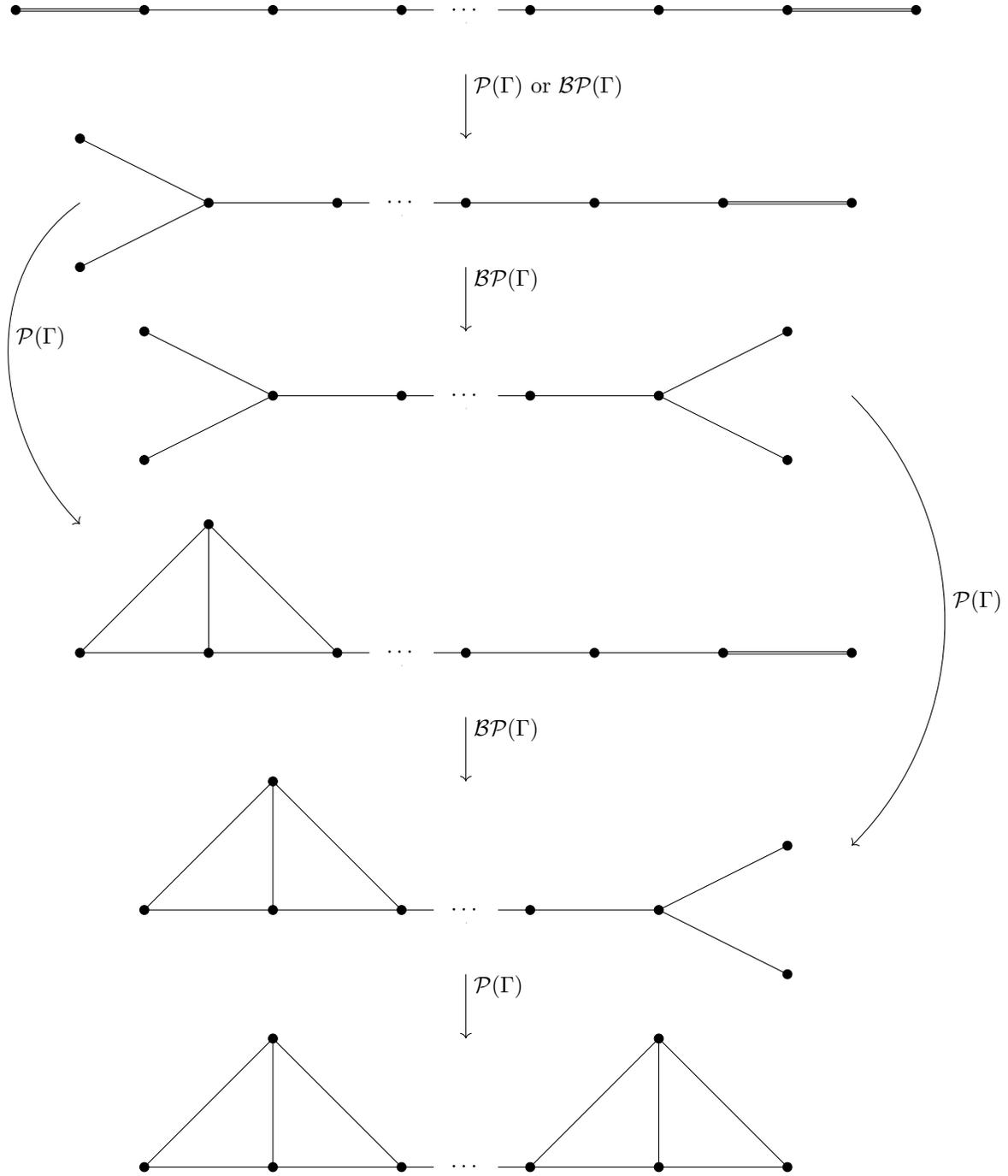
\begin{figure}
    \centering
    \begin{tikzpicture}
        \draw[double] (-6,0) -- (-4,0);
        \draw (-4,0)--(-2,0)--(0,0);
        \draw (0,0)--(0.5,0);
        \filldraw[black] (1,-0.2) circle (0pt)  node[anchor=south]{$\cdots$};
        \draw (1.5,0)--(2,0);
        \draw (2,0)--(4,0)--(6,0);
        \draw[double] (6,0) -- (8,0);
        \filldraw[black](0,0) circle(2pt);
        \filldraw[black](2,0) circle(2pt);
        \filldraw[black](-2,0) circle(2pt);
        \filldraw[black](-4,0) circle(2pt);
        \filldraw[black](4,0) circle(2pt);
        \filldraw[black](6,0) circle(2pt);
        \filldraw[black](-6,0) circle(2pt);
        \filldraw[black](8,0) circle(2pt);

        \draw[->] (1,-1)--(1,-2) node[midway,above right] {$\PP(\Gamma)$ or $\BP(\Gamma)$};
        
        \begin{scope}[shift = {(-1,-3)}]
            \draw (-4,1)--(-2,0);
            \draw (-4,-1)--(-2,0)--(0,0);
            \draw (0,0)--(0.5,0);
            \filldraw[black] (1,-0.2) circle (0pt)  node[anchor=south]{$\cdots$};
            \draw (1.5,0)--(2,0);
            \draw (2,0)--(4,0)--(6,0);
            \draw[double] (6,0) -- (8,0);
            \filldraw[black](0,0) circle(2pt);
            \filldraw[black](2,0) circle(2pt);
            \filldraw[black](-2,0) circle(2pt);
            \filldraw[black](-4,1) circle(2pt);
            \filldraw[black](-4,-1) circle(2pt);
            \filldraw[black](4,0) circle(2pt);
            \filldraw[black](6,0) circle(2pt);
            \filldraw[black](8,0) circle(2pt);
        \end{scope}

        \draw[->] (1,-4)--(1,-5) node[midway,above right]{$\BP(\Gamma)$};
        \draw[->,out=215,in=135]  (-5,-3) to node[midway,above right]{$\PP(\Gamma)$} (-5,-8) ;
        
        \begin{scope}[shift = {(0,-6)}]
            \draw (-4,1)--(-2,0);
            \draw (-4,-1)--(-2,0)--(0,0);
            \draw (0,0)--(0.5,0);
            \filldraw[black] (1,-0.2) circle (0pt)  node[anchor=south]{$\cdots$};
            \draw (1.5,0)--(2,0);
            \draw (2,0)--(4,0);
            \draw(6,1)-- (4,0) -- (6,-1);
            \filldraw[black](0,0) circle(2pt);
            \filldraw[black](2,0) circle(2pt);
            \filldraw[black](-2,0) circle(2pt);
            \filldraw[black](-4,1) circle(2pt);
            \filldraw[black](-4,-1) circle(2pt);
            \filldraw[black](4,0) circle(2pt);
            \filldraw[black](6,1) circle(2pt);
            \filldraw[black](6,-1) circle(2pt);
            
        \end{scope}

    \draw[->,out=-45,in=45]  (7,-6) to node[midway,above right]{$\PP(\Gamma)$} (7,-13) ;
    
        \begin{scope}[shift = {(-4,-7)}]
            \draw (1,-1)--(1,-3);
            \draw (1,-1)--(-1,-3);
            \draw (1,-1)--(3,-3);
            \draw(-1,-3)-- (1,-3) -- (3,-3);
            \draw (3,-3)--(3.5,-3);
            \filldraw[black] (4,-3.2) circle (0pt)  node[anchor=south]{$\cdots$};
            \draw (4.5,-3)--(5,-3);
            \draw (5,-3)--(7,-3)--(9,-3);
            \draw[double](9,-3)--(11,-3);
            \filldraw[black](1,-3) circle(2pt);
            \filldraw[black](3,-3) circle(2pt);
            \filldraw[black](-1,-3) circle(2pt);
            \filldraw[black](5,-3) circle(2pt);
            \filldraw[black](1,-1) circle(2pt);
            \filldraw[black](7,-3) circle(2pt);
            \filldraw[black](9,-3) circle(2pt);
            \filldraw[black](11,-3) circle(2pt);
        \end{scope}
        
       \draw[->] (1,-11)--(1,-12) node[midway,above right] {$\BP(\Gamma)$};
        
        \begin{scope}[shift = {(-3,-11)}]
            \draw (1,-1)--(1,-3);
            \draw (1,-1)--(-1,-3);
            \draw (1,-1)--(3,-3);
            \draw(-1,-3)-- (1,-3) -- (3,-3);
            \draw (3,-3)--(3.5,-3);
            \filldraw[black] (4,-3.2) circle (0pt)  node[anchor=south]{$\cdots$};
            \draw (4.5,-3)--(5,-3);
            \draw (5,-3)--(7,-3);
            \draw (9,-4)--(7,-3)--(9,-2);
            \filldraw[black](1,-3) circle(2pt);
            \filldraw[black](3,-3) circle(2pt);
            \filldraw[black](-1,-3) circle(2pt);
            \filldraw[black](5,-3) circle(2pt);
            \filldraw[black](1,-1) circle(2pt);
            \filldraw[black](7,-3) circle(2pt);
            \filldraw[black](9,-4) circle(2pt);
            \filldraw[black](9,-2) circle(2pt);
        \end{scope}

          \draw[->] (1,-15)--(1,-16) node[midway,above right] {$\PP(\Gamma)$};
          
        \begin{scope}[shift = {(-3,-15)}]
            \draw (1,-1)--(1,-3);
            \draw (1,-1)--(-1,-3);
            \draw (1,-1)--(3,-3);
            \draw(-1,-3)-- (1,-3) -- (3,-3);
            \draw (3,-3)--(3.5,-3);
            \filldraw[black] (4,-3.2) circle (0pt)  node[anchor=south]{$\cdots$};
            \draw (4.5,-3)--(5,-3);
            \draw (5,-3)--(7,-3)--(9,-3);
            \draw (7,-1)--(9,-3);
            \draw (7,-1)--(5,-3);
            \draw (7,-1)--(7,-3);
            \filldraw[black](1,-3) circle(2pt);
            \filldraw[black](3,-3) circle(2pt);
            \filldraw[black](-1,-3) circle(2pt);
            \filldraw[black](5,-3) circle(2pt);
            \filldraw[black](1,-1) circle(2pt);
            \filldraw[black](7,-3) circle(2pt);
            \filldraw[black](9,-3) circle(2pt);
            \filldraw[black](7,-1) circle(2pt);
        \end{scope}

    \end{tikzpicture}
    \caption{The diagrams of the regular hypertopes obtained from the polytope of type $\{4,3^{n-2},4\}_\textbf{s}$ for $n$ at least $6$.}
    \label{fig:43...34}
\end{figure}

\clearpage

\section{Acknowledgments}
Both authors are grateful to Dimitri Leemans for his insights and for making them aware of the content of \cite{Leemans2000} at the beginning of this project.

The author Claudio Alexandre Piedade was partially supported by CMUP, member of LASI, which is financed by national funds through FCT -- Funda\c c\~ao para a Ci\^encia e a Tecnologia, I.P., under the projects with reference UIDB/00144/2020 and UIDP/00144/2020.
This article was partially written while the first author was visiting the second author at the Max Planck Institute for Mathematics in the Sciences, in Leipzig.

\bibliographystyle{ieeetr} 
\bibliography{refs}

\end{document}